\newcommand{\PC}{\mathrm{PC}_{n,d}}
\newcommand{\PCtwo}{\mathrm{PC}_{n,2}}
\newcommand{\PH}{\mathrm{PH}}
\newcommand{\PHc}{\mathrm{PH}^{\check{\mathrm{C}}}}
\newcommand{\PHvr}{\mathrm{PH}^{\mathrm{VR}}}
\newcommand{\VR}{\mathrm{VR}}
\newcommand{\C}{\check{\mathrm{C}}}
\newcommand{\preord}{\mathrm{TPO}}
\newcommand{\bd}{\mathrm{bd}}
\newcommand{\crit}{\mathrm{Crit}}
\newcommand{\critS}{\mathrm{CritS}}
\newcommand{\aff}{\mathrm{Aff}}
\renewcommand{\epsilon}{\varepsilon}
\newcommand{\QQ}{\mathbb{Q}}
\newtheorem{theorem}{Theorem}
\newtheorem{definition}[theorem]{Definition}
\newtheorem{lemma}[theorem]{Lemma}
\newtheorem{proposition}[theorem]{Proposition}
\newtheorem{corollary}[theorem]{Corollary}
\newtheorem{conjecture}[theorem]{Conjecture}
\theoremstyle{definition}
\newtheorem{example}[theorem]{Example}
\newtheorem{remark}{Remark}
\newcommand{\RR}{\mathbb{R}}
\newcommand{\NN}{\mathbb{N}}
\title{Fibers of point cloud persistence}
\author{David Beers \thanks{Department of Mathematics, University of California, Los Angeles}, Heather A Harrington \thanks{Mathematical Institute, University of Oxford; 
Center of Mathematics, Max Planck Institute of Molecular Cell Biology and Genetics; Center for Systems Biology Dresden; Faculty of Mathematics, Technische Universit\"at Dresden}, Jacob Leygonie \thanks{Mathematical Institute, University of Oxford}, Uzu Lim \thanks{School of Mathematical Sciences, Queen Mary University of London},  and Louis Theran\thanks{School of Mathematics and Statistics, University of St Andrews}}
\begin{document}
\maketitle
\begin{abstract}
Persistent homology (PH) studies the topology of data across multiple scales by building nested collections of topological spaces called filtrations, computing homology and returning an algebraic object that can be vizualised as a barcode--a multiset of intervals. The barcode is stable and interpretable, leading to applications within mathematics and data science. 
We study the spaces of point clouds with the same barcode by connecting persistence with real algebraic geometry and rigidity theory. Utilizing a semi-algebraic setup of point cloud persistence, 
we give lower and upper bounds on its dimension and provide combinatorial conditions in terms of the local and global rigidity properties of graphs associated with point clouds and filtrations. We prove that for generic point clouds in \(\mathbb{R}^d\) (\(d \geq 2\)), a point cloud is identifiable up to isometry from its VR persistence if the associated graph is globally rigid, and locally identifiable up to isometry from its \v{C}ech persistence if the associated hypergraph is rigid. 

\end{abstract}

\section{Introduction}
Persistent homology (PH) provides a multiscale geometric descriptor of data that is functorial, stable to perturbations and interpretable  \cite{cohen2005stability,chazal2009proximity,bauer2014induced}, leading to many applications in mathematics \cite{buhovsky2022coarse,polterovich2016autonomous,kislev2022bounds} and the real-world \cite{gardner2022toroidal,lee2017quantifying,miller2015fruit,benjamin2024multiscale,rabadan2019topological}. Although this process yields an interesting nontrivial descriptor of the ``shape of data'', it is unclear how much information is lost in the mapping that takes point clouds to their barcode assigned by PH. To answer this, we ask the fundamental question: What is the shape of the \textit{fibers} (i.e. level sets) of the persistent homology map?

The most common set up in topological data analysis (TDA), which we consider here, is point cloud data. Let \( P = (p_1, \ldots, p_n) \) be a configuration of ordered distinct points in \(\mathbb{R}^d\). 
While $P$ itself has no interesting topology, the union of closed balls of radius $r$ centered at each point does. The nerve theorem implies that this union has the same homotopy type as its nerve, known as the \emph{\v{C}ech} complex \cite{chazal2008towards}. Another complex, the \emph{Vietoris Rips} (VR) complex starts with $n$ points and $k-$simplices are built on tuples of $k+1$ points whenever all pairwise distances in the tuple are within distance $r$. The key idea of persistent homology is to consider all values of $r$, instead of a fixed choice, by building a nested sequence of simplicial complexes indexed by $r$, called a filtration. 
Applying the $i$th homology with coefficients in a field to a filtration gives a \emph{persistence module}, a functor $(\mathbb{R}, \leq) \to \mathrm{Vect_\mathbb{F}}$.
Persistence modules considered in this paper decompose uniquely up to isomorphism into a direct sum of indecomposable  modules. Indexing this direct sum is a multiset of intervals in the real line called a \emph{barcode} \cite{zomorodian2004computing}.

Here, we study the spaces of point clouds with the same barcode. Taking either the \v{C}ech or Vietoris Rips filtration, the persistence map assigns a point cloud $P$ to its barcode $D_i$ in homological degree~$i$. Hence from our filtration of $P$ we get an ordered collection of barcodes $D=[D_0,D_1,D_2,\ldots]$. 
We define $\PHvr$ to be the map which assigns to a point cloud $P$ its collection of barcodes $D$ under the Vietoris-Rips  filtration. Similarly we define  $\PHc$ to be the analogous map for the \v{C}ech filtration. The dimension of the spaces $(\PH^{\VR})^{-1}(D)$ and $(\PH^{\C})^{-1}(D)$ can be thought of as the maximal number of independent ways a point cloud can be perturbed without affecting its persistent homology, giving us a lower bound. 

In a small neighborhood of any generic\footnote{The definition of \emph{generic} in this paper is of an algebraic nature, and will be detailed in Section \ref{sec:algebra}.} $P$, the map $\PHvr$ is a polynomial function and the map $\PHc$ is a rational function. We can compute the rank of the Jacobian of these maps at $P$.
By genericty of $P$, the Jacobian test says that the rank is the local dimension of the image around the barcode and by standard dimension theory, this gives 
the local dimension of the fiber. The dimension of the whole fiber has to be larger than the local dimension, which gives a lower bound for the dimension of the whole fiber.

\begin{restatable}{theorem}{dimlow}
\label{thm:lowerbound}
Let $\PH$ denote either $\PHc$ or $\PHvr$. For any generic $P \in \PC$, if there are exactly $k$ distinct bounded interval endpoints in barcodes in $D = \PH(P)$, then $\dim\PH^{-1}(D)\geq nd - k + 1$. 
\end{restatable}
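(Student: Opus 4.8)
\noindent\emph{Proof strategy.}\quad The plan is to bound $\operatorname{rank} D\PH|_P$ by $k-1$ and then invoke the Jacobian test and the fiber dimension theorem recalled before the statement. First, fix a neighbourhood $U$ of $P$ on which $\PH$ is polynomial (in the VR case) or rational (in the \v{C}ech case); after shrinking $U$, the combinatorial type of the filtration of every $P'\in U$, and hence the birth--death pairing producing the barcode $\PH(P')$, is constant. In suitable local coordinates on the space of barcodes near $D$ this makes $\PH|_U$ a map $P'\mapsto\big(h_1(P'),\dots,h_N(P')\big)$, where $N$ is the total number of endpoints of bounded bars over all homological degrees (the only unbounded bar is the $H_0$-bar $[0,\infty)$, which contributes no bounded endpoint), and each $h_j\colon U\to\RR$ is the polynomial (resp.\ rational) function giving the filtration value of some simplex --- a pairwise distance $\|p_a-p_b\|$ for VR, a minimal-enclosing-ball radius for \v{C}ech, or the constant $0$ for the births of the $n-1$ bounded $H_0$-bars (here we use $n\ge 2$). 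Since on $U$ the barcode and the tuple $(h_1,\dots,h_N)$ determine one another, $\operatorname{rank} D\PH|_P=\operatorname{rank} D(h_1,\dots,h_N)|_P$.

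Next, if $h_i\equiv h_j$ then the $i$th and $j$th rows of the Jacobian coincide, so $\operatorname{rank} D(h_1,\dots,h_N)|_P$ is at most the number of distinct functions among $h_1,\dots,h_N$. The key step --- which I expect to be the main obstacle --- is to show that for generic $P$ this number equals $k$, the number of distinct bounded barcode endpoints, i.e.\ the number of distinct \emph{values} $h_j(P)$. This rests on genericity: whenever $h_i\not\equiv h_j$, the locus $\{h_i=h_j\}$ is contained in a proper algebraic subset of $\PC$ (clearing denominators and squaring, it lies in the zero set of a nonzero polynomial), so a generic $P$ avoids the finite union of all such loci, whence $h_i(P)=h_j(P)$ forces $h_i\equiv h_j$. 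Granting this, exactly $k$ distinct functions occur among the $h_j$; one of them is the constant $0$ and contributes only a zero row, so $\operatorname{rank} D\PH|_P\le k-1$.

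Finally, by genericity of $P$ the Jacobian test identifies $\operatorname{rank} D\PH|_P$ with the local dimension of the image of $\PH$ at $D$, which is therefore at most $k-1$; the fiber dimension theorem then gives $\dim_P\PH^{-1}(D)\ge nd-\operatorname{rank} D\PH|_P\ge nd-(k-1)$. Since $\dim\PH^{-1}(D)\ge\dim_P\PH^{-1}(D)$, we conclude $\dim\PH^{-1}(D)\ge nd-k+1$.
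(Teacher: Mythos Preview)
Your approach is essentially the paper's: restrict to a region where the combinatorial type of the filtration is fixed (the paper's cells $S_\preceq$), represent $\PH$ there by the tuple of filtration-value functions, observe that at most $k-1$ of these are distinct and nonconstant, and conclude by fibre-dimension counting. Two remarks are worth making. First, what you flag as the ``key step'' is actually automatic once you are on such a cell: by the very definition of $S_\preceq$, one has $\Phi_\sigma(P)=\Phi_\tau(P)$ for \emph{one} $P\in S_\preceq$ if and only if $\Phi_\sigma\equiv\Phi_\tau$ on all of $S_\preceq$, so ``distinct values at $P$'' and ``distinct functions on $U$'' coincide without any further genericity argument. Second, the paper's actual proof sidesteps the Jacobian entirely: it directly builds a semialgebraic map $f\colon S_\preceq\to\RR^{k-1}$ by choosing one simplex per nonzero endpoint value, shows (via Lemma~\ref{lem:preorderfiber}) that on $S_\preceq$ the $f$-fibres coincide with the $\PH$-fibres, and then invokes the semialgebraic fibre-dimension formula (Lemma~\ref{lem:fiberdim}) to exhibit a set $A_\preceq$ of dimension $<nd$ outside of which $\dim f^{-1}(f(P))\ge nd-k+1$. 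This is cleaner because it needs only that $\Phi$ is semialgebraic, whereas your Jacobian route requires checking that each $\Phi^{\C}_\sigma$ is smooth on the interior of a cell (true, but not immediate: one must argue that the ``witness'' subsimplex in Lemma~\ref{lemma:sphereunique} is determined by $\preceq$, so that $\rho_\sigma$ is a fixed square root of a rational function there).
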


Computing the dimension of the fiber is complicated. The PH map, loosely speaking, behaves as if a bunch of rational maps were glued together, where the codomain changes along the boundaries of glued regions. 
Since the codomain of $\PHc$ and $\PHvr$ is the space of ordered collections of barcodes, not $\mathbb{R}^n$, even showing that the spaces $(\PH^{\VR})^{-1}(D)$ and $(\PH^{\C})^{-1}(D)$ have a well-behaved notion of dimension requires some work. Following an observation of Carrière et al  \cite{carriere2021optimizing}, we establish that these level sets are \emph{semialgebraic} sets, which implies that they have a well behaved dimension. 
Even for a generic point cloud, the fiber contains generic and nongeneric points. For this reason, the action of ambient isometries of $\mathbb{R}^d$ does not produce a fiber bundle structure on all level sets of the persistence map. It is worth mentioning that this lack of a fiber bundle structure makes calculating the dimension of level sets quotiented by isometry considerably more difficult. 
It can be proven that the local dimension of a level set quotiented by isometry at $P$ is bounded above the local dimension of the unquotiented level set at $P$ minus $\frac{d(d+1)}{2}$, since $\frac{d(d+1)}{2}$ is the dimension of the isometry group of \(\mathbb{R}^d\). Given these challenges, there is no hope to compute even the dimension of the fiber unquotiented by isometry, but we can bound this number from above as well. 

\begin{restatable}{theorem}{dimup}
    For any ordered collection of barcodes $D = [D_0,D_1,D_2\ldots]$, the dimensions of $(\PHc)^{-1}(D)$ and $(\PHvr)^{-1}(D)$ are both less than or equal to $nd - n + 1$, where $n$ is the number of intervals in $D_0$ (and hence the number of points in any point cloud $P$ in the fiber).
\end{restatable}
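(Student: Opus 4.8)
The plan is to prove both upper bounds at once using only the degree-zero barcode $D_0$, via the classical identification of $H_0$-persistence with single-linkage clustering; write $\PH$ for either $\PHc$ or $\PHvr$. I may assume $\PH^{-1}(D)\neq\emptyset$. Every point cloud of $n$ points has a $0$-dimensional barcode with exactly $n$ bars (one per point, each born at $0$), so the hypothesis that $D_0$ has $n$ intervals forces $D_0$ to consist of the infinite bar $[0,\infty)$ together with $n-1$ finite bars $[0,\ell_1),\dots,[0,\ell_{n-1})$ --- otherwise the fiber is empty and there is nothing to prove.

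\emph{Step 1 (a spanning-tree constraint from $D_0$).} For both the \v{C}ech and the Vietoris--Rips filtration, the $1$-skeleton at scale $r$ contains the edge $\{i,j\}$ exactly when $\|p_i-p_j\|$ is at most a fixed positive multiple of $r$ (the multiple depending only on the filtration and the scaling convention), and $H_0$ depends only on the $1$-skeleton; hence the way components merge as $r$ grows is governed by Kruskal's algorithm on the complete graph weighted by pairwise distances. Therefore the multiset $\{\ell_1,\dots,\ell_{n-1}\}$ of finite death values of $D_0$ equals, up to a fixed rescaling constant $c>0$, the multiset of edge lengths of a minimum spanning tree of any $P$ in the fiber. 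Consequently, for each $P\in\PH^{-1}(D)$ there is a spanning tree $T$ on $\{1,\dots,n\}$ (an MST of $P$) and an assignment $\sigma\colon E(T)\to\{\ell_1,\dots,\ell_{n-1}\}$ with $\|p_a-p_b\|^2=c\,\sigma(\{a,b\})^2$ for every edge $\{a,b\}$ of $T$. Letting $V_{T,\sigma}\subseteq(\RR^d)^n$ be the real algebraic set cut out by these $n-1$ equations, I obtain $\PH^{-1}(D)\subseteq\bigcup_{T,\sigma}V_{T,\sigma}$, a \emph{finite} union, since there are finitely many spanning trees on $n$ labelled vertices and finitely many assignments.

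\emph{Step 2 (dimension of $V_{T,\sigma}$ by peeling leaves).} I would prove by induction on $n$ that $\dim V_{T,\sigma}\leq nd-n+1$ for every spanning tree $T$ and assignment $\sigma$. For $n=1$ this is $V_{T,\sigma}=\RR^d$, of dimension $d=1\cdot d-1+1$. For $n\geq 2$, pick a leaf $v$ of $T$ with its unique neighbour $u$, let $T'=T-v$, and let $\pi\colon(\RR^d)^n\to(\RR^d)^{n-1}$ forget the coordinates of $p_v$. The defining equations of $V_{T,\sigma}$ not involving $v$ are exactly those of $V_{T',\sigma'}$ (with $\sigma'$ the restriction of $\sigma$), so $\pi$ restricts to a semialgebraic map $V_{T,\sigma}\to V_{T',\sigma'}$; and over any point in its image the fiber is the set of admissible positions of $p_v$, i.e.\ a sphere centred at the image of $p_u$, of dimension at most $d-1$. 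By the fiber-dimension inequality for semialgebraic maps, $\dim V_{T,\sigma}\leq\dim V_{T',\sigma'}+(d-1)\leq\big((n-1)d-(n-1)+1\big)+(d-1)=nd-n+1$.

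\emph{Step 3 (conclusion) and the main difficulty.} Since $\PH^{-1}(D)$ is semialgebraic (established earlier in the paper) and the dimension of a finite union of semialgebraic sets is the largest of the dimensions of its members, Steps 1 and 2 give $\dim\PH^{-1}(D)\leq\max_{T,\sigma}\dim V_{T,\sigma}\leq nd-n+1$. The argument is short once one realizes that $D_0$ alone forces the bound, so there is no deep obstacle; the point that needs care is Step 1 --- making the ``$D_0$ recovers the MST edge lengths'' statement precise for the exact filtration conventions used here, and checking that non-uniqueness of the MST, or repetitions among the $\ell_i$, cause no problem (they only shrink the finite index set of the union). The one external ingredient is the fiber-dimension inequality for semialgebraic maps, for which a reference should be cited.
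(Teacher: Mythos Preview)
Your proposal is correct and follows essentially the same route as the paper: both arguments reduce to $D_0$ alone, invoke the single-linkage/minimum-spanning-tree identification (the paper's Lemma~\ref{lem:minspantree}) to cover the fiber by a finite union $\bigcup_{T,\sigma}V_{T,\sigma}$ over spanning trees and edge-length assignments, and then bound $\dim V_{T,\sigma}\le nd-n+1$. The only difference is cosmetic: where you peel leaves inductively and appeal to the semialgebraic fiber-dimension inequality, the paper writes down in one step the homeomorphism $V_{T,\sigma}\hookrightarrow\mathbb{R}^d\times(\mathbb{S}^{d-1})^{n-1}$ obtained by rooting $T$ and recording the root position together with the unit direction along each edge---which is exactly the global product structure your induction unrolls.
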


We next provide a series of combinatorial conditions on $P$ that are sufficient to imply a barcode is most descriptive, i.e. there is only one point cloud up to ambient isometry that can produce that barcode. These results rely on concepts such as identifiability from real and applied algebraic geometry and rigidity theory from algebraic combinatorics and discrete geometry. In applied algebraic geometry, studying the degree of a single map can a difficult problem. 
Central to identifiability is determining when the fiber of a rational map is a singleton, in which case the point in the fiber is called identifiable; or finite, in which case points in the fiber are called locally identifiable \cite{sullivant2023algebraic,hong2020global,dufresne2018geometry}. There are many situations when such fibers are not even locally identifiable, e.g., point clouds in the plane are not identifiable up to 
ambient isometry under the mapping considered in \cite{BK}; the problem of nonidentifiability is nontrivial for low rank tensors, where the map is the projection from the abstract secant variety to the secant variety and the fiber is related to the notion of decomposition locus \cite{galgano2024identifiability,bernardi2024decomposition}. We have described identifiability of a point in the domain, in complex algebraic geometry, if the map under consideration is algebraic, and the domain is irreducible either almost all points are identifiable or are not. However, since the persistence map is more complicated, we do not have a general dichotomy of this type.
Nevertheless, we make progress on when a barcode is most informative by connecting the problem with rigidity theory \cite{connelly2022frameworks}.
Here, rigidity theory is concerned with identifiability of an unknown set of $n$ labeled points from 
the measurement of some $m$ labeled pairwise distances associated with the edges of a graph $G$, which 
is naturally determined by the dimension of the image and the degree of the polynomial map 
$m_G$ from sets of $n$ labeled points in $\RR^d$ to 
$\RR^m$ that records the $m$ squared edge lengths of the graph $G$. 

We denote by $\PC$ the space of (ordered) point clouds of $n$ distinct points in $d$ dimensional Euclidean space. 
We are interested in collections of barcodes $D$ that are as descriptive as possible. Such a collection $D$ satisfies that every point cloud in the fiber of $D$ is related by an isometry. We say a point cloud that maps to such a collection $D$ under $\PHvr$ (resp. $\PHc$) is \emph{identifiable} under Vietoris-Rips (resp. \v{C}ech) persistence.
To give a sufficient condition for when this happens in the Vietoris-Rips setting, we use notions arising in the rigidity theory of frameworks [ref], an area of algebraic combinatorics.
To each point cloud $P$ we associate a graph $G_P$, for the Vietoris--Rips filtration or 
a hypergraph $H_P$, for the Čech filtration. The pair $(G_\VR,P)$ is called a framework. Intuitively, we should think of an edge in $G_P$ between $p_i$ and $p_j$ as constraining the distance between $p_i$ and $p_j$ to be constant if we allowed the framework $(G_\VR,P)$ to move freely. We say that a 
framework is \emph{globally rigid} $(G_P,P)$ if, whenever $(G_P,Q)$ is another framework with the same edge lengths, 
then $P$ and $Q$ are related by a Euclidean isometry that maps $p_i$ to $q_i$ for all $i$.  A framework is 
\emph{locally rigid} (or simply \emph{rigid}) if there is a neighborhood $U\ni P$ so that if $Q\in U$ and 
the frameworks $(G_P, P)$ and $(G_P,Q)$ have the same edge lengths, they are similarly related by an isometry.  (See 
Sections \ref{sec:rigidity} and \ref{sec:vrrigid} for rigorous definitions).

We can now state one of our main theorem regarding Vietoris-Rips persistence:

\begin{restatable}{theorem}{vrggr}
\label{thm:globrig}
Let $n \geq d+2$ and $d\geq 2$. For generic $P\in\PC$, if $(G_P, P)$ is globally rigid then $P$ is identifiable up to isometry under Vietoris-Rips persistence.
\end{restatable}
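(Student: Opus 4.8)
The plan is to deduce the statement from global rigidity of the single framework $(G_P,P)$ by establishing that, for generic $P$, the Vietoris--Rips barcode $D=\PHvr(P)$ records exactly the squared edge lengths of $G_P$, and nothing weaker. Concretely, let $Q$ be any point cloud in $(\PHvr)^{-1}(D)$; it suffices to produce a relabelling of the points of $Q$ after which $\|q_i-q_j\|=\|p_i-p_j\|$ for every edge $\{i,j\}$ of $G_P$. Indeed, once this is done the defining property of global rigidity of $(G_P,P)$ forces $P$ and $Q$ to be related by an isometry of $\RR^d$, which is precisely what it means for $D$ to be as descriptive as possible, i.e. for $P$ to be identifiable up to isometry. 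So the whole content is the passage from ``$Q$ has the same barcode as $P$'' to ``$Q$ has, up to relabelling, the same $G_P$-edge lengths as $P$''.

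The first step treats point clouds sharing the combinatorial type of $P$. Since $P$ is generic its pairwise distances are distinct, so $P$ lies in an open stratum $U$ on which the total preorder $\preord$ on pairs is constant; on $U$ the Vietoris--Rips filtration has a fixed combinatorial type, and $\PHvr$ factors through the squared-distance map $Q\mapsto(\|q_i-q_j\|^2)_{i<j}$ followed by a map that, at each prescribed barcode slot, simply reads off one specific coordinate $\|q_e\|^2$. The pairs $e$ that get selected in this way are exactly the edges of $G_P$ --- these are the $k$ pairs realising the bounded endpoints of $D$, in the notation of Theorem~\ref{thm:lowerbound}. Hence for $Q\in U$ we have $\PHvr(Q)=D$ if and only if $\|q_e\|=\|p_e\|$ for every edge $e$ of $G_P$; that is, $(\PHvr)^{-1}(D)\cap U$ is the fiber of the edge-length map $m_{G_P}$ through $P$, intersected with $U$. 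Global rigidity of $(G_P,P)$ identifies $m_{G_P}^{-1}(m_{G_P}(P))$ with the orbit of $P$ under the diagonal action of the isometry group of $\RR^d$, so $(\PHvr)^{-1}(D)\cap U$ is exactly that orbit intersected with $U$.

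The remaining step, and the one I expect to be the main obstacle, is to drop the assumption $Q\in U$: a point cloud in the fiber may be non-generic, or generic but lying in a different combinatorial stratum, where the correspondence ``barcode slot $\leftrightarrow$ squared distance'' used above is different or absent. Here I would show that $\PHvr(Q)=D$ nonetheless forces the critical distances of $Q$ to agree with those of $P$ along a graph that becomes $G_P$ after a relabelling of $Q$. The ingredients I anticipate using are the degree-$0$ barcode, which pins down the multiset of minimum-spanning-tree weights of $Q$ and, combined with the higher-degree barcodes, constrains how connected components and cycles appear and disappear along the filtration of $Q$; and an induction up the filtration that matches the critical edges of $Q$ one at a time against those of $P$, using genericity of $P$ to exclude accidental coincidences of critical values. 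A complementary organising principle is that $(\PHvr)^{-1}(D)$ is semialgebraic with finitely many components: the first step shows that the closure of $(\PHvr)^{-1}(D)\cap U$ is contained in the isometry orbit of $P$, so what remains is to rule out components of the fiber supported on other strata, for which one returns to the combinatorial analysis. The dimension bounds serve only as a consistency check here: Theorem~\ref{thm:lowerbound} gives $\dim(\PHvr)^{-1}(D)\ge nd-k+1$, which for a globally rigid $G_P$ (necessarily with $k\ge nd-\tfrac{d(d+1)}{2}+1$ edges) is at most $\tfrac{d(d+1)}{2}$, matching the dimension of the orbit of $P$. Finally, the hypotheses $d\ge 2$ and $n\ge d+2$ enter only to guarantee that a generic configuration spans $\RR^d$ affinely and has trivial isometry stabiliser, so that its orbit is a genuine $\tfrac{d(d+1)}{2}$-dimensional manifold and the fiber, once known to be a single orbit, has the expected dimension.
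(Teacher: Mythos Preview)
Your first step is correct and essentially reproduces Proposition~\ref{prop:critdists}: on the open stratum $U=S_\preceq$ containing $P$, the barcode records exactly the $G_P$-edge lengths, so classical (labelled) global rigidity of $(G_P,P)$ handles $Q\in U$.

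The gap is in the second step, which you yourself flag as the main obstacle and then leave as a sketch. For $Q$ in a different stratum, the critical graph $G_Q$ need not equal $G_P$, even after relabelling, and the barcode no longer hands you the edge lengths of $G_P$ on $Q$. Your proposed remedy---inducting up the filtration, matching critical edges one at a time, or ruling out other components of the semialgebraic fiber---is not an argument: nothing in it explains why a \emph{relabelling} making $G_Q$ contain $G_P$ with the correct lengths must exist, and the dimension bounds you cite are only consistency checks, not obstructions.

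The paper sidesteps this entirely by invoking a much stronger tool: the \emph{unlabelled} generic global rigidity theorem of Gortler--Theran--Thurston (Theorem~\ref{thm: gugr}). That theorem says that if $G$ is GGR and $(G,P)$ is generic, then the unordered \emph{multiset} of $G$-edge lengths already determines $P$ up to isometry, \emph{regardless of which graph $H$ on $[n]$ realises that multiset on $Q$}. The paper's proof is then two lines: the barcode endpoint set $E(D)$ equals the set of $G_P$-edge lengths (genericity makes $\phi_P$ bijective); for any $Q$ in the fiber, surjectivity of $\phi_Q$ onto $E(D)$ lets one extract a subgraph $G'_Q\subseteq G_Q$ with the same edge-length multiset as $G_P$; Theorem~\ref{thm: gugr} then gives the relabelling and the congruence simultaneously. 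This is also where the hypotheses $n\ge d+2$, $d\ge 2$ actually enter---they are hypotheses of Theorem~\ref{thm: gugr}, not merely conditions for the orbit to have the expected dimension.
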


A crucial challenge for proving the above result is that, in general, varying $P$ in the fiber of $D$ changes the graph $G_P$. One of the key steps in our proof of Theorem \ref{thm:globrig} is showing that this generically does not happen when $G_P$ is globally rigid. To show this, we leverage a modern rigidity-theoretic result from \cite{gortler2019generic}. As we will discuss in Section \ref{sec:rigidity}, for generic $P$ the global rigidity of $(G_P,P)$ only depends on the combinatorial structure of the graph $G_P$. As such, identifiability of a 
generic point cloud $P$ only depends on the combinatorial properties of the invariant $G_P$.

 To analyze \v{C}ech persistence, a new theory of rigidity is required. We develop this theory in Section \ref{sec:circrigid}, to the point where we can achieve a local, but not global, identifiability result for \v{C}ech persistence. Explicitly, we assign to each point cloud $P$ a hypergraph $H_P$ and the pair $(H_P,P)$ will be an example of what we call a \emph{circumsphere framework}. We say that $(H_P,P)$ is \emph{(locally) rigid} if the only way to perturb $P$ without changing the circumradius of any tuple of points corresponding to a hyperedge in $H_P$ is via an isometry. We say that $P$ is \emph{locally identifiable (under \v{C}ech persistence)} when there is a neighborhood $U$ of $P$ such that for any $Q$ in $U$ with the same barcodes from \v{C}ech persistence, $P$ and $Q$ are isometric. 
 
 We have the following theorem for \v{C}ech persistence:

\begin{restatable}{theorem}{cechglr}
    \label{thm:cechglr}
    A generic $P \in \PC$ is locally identifiable up to isometry under \v{C}ech persistence if and only if the circumsphere framework $(H_P, P)$ is rigid.
\end{restatable}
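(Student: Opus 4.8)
The statement is an "if and only if," so I would prove the two directions separately, and the heart of the matter is to show that, for generic $P$, the local structure of the $\v{C}$ech fiber through $P$ is governed precisely by the circumsphere framework $(H_P, P)$. The $\v{C}$ech filtration changes combinatorially only at radii where some tuple of points acquires a new minimal enclosing ball, and the filtration values attached to the simplices of the $\v{C}$ech complex are exactly the circumradii of the corresponding point tuples (for tuples whose minimal enclosing ball is circumscribed). So the first step is to make this precise: I would define $H_P$ as the hypergraph whose hyperedges are the tuples of points that appear as ``critical'' simplices in the $\v{C}$ech filtration of $P$ — i.e., those tuples whose filtration value is the circumradius of the full tuple — and observe that, by genericity, the barcode $D = \PHc(P)$ determines, and is locally determined by, the multiset of these circumradii. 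This is the $\v{C}$ech analogue of the reduction used for Theorem~\ref{thm:globrig}, and as there the main subtlety is that moving $Q$ in the fiber could a priori change the hypergraph $H_Q$; I would argue that genericity prevents this in a neighborhood of $P$ (the combinatorial type of the $\v{C}$ech filtration is locally constant at a generic point, since all the relevant inequalities among circumradii and facet-distances are strict).

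**Easy direction ($\Leftarrow$).** Suppose $(H_P,P)$ is rigid. Then by definition there is a neighborhood $U$ of $P$ such that any $Q \in U$ with the same circumradius on every hyperedge of $H_P$ is isometric to $P$. Shrinking $U$ so that $H_Q = H_P$ for all $Q \in U$ (using the genericity argument above), any $Q \in U$ with $\PHc(Q) = \PHc(P)$ has the same critical circumradii, hence the same circumradius on each hyperedge of $H_P = H_Q$, hence is isometric to $P$. So $P$ is locally identifiable under $\v{C}$ech persistence.

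**Hard direction ($\Rightarrow$).** For the converse I would argue the contrapositive: if $(H_P,P)$ is not rigid, then there is an analytic path $P_t$, $P_0 = P$, not contained in any isometry orbit, along which every hyperedge circumradius is constant; I must upgrade this to a path on which the whole $\v{C}$ech barcode is constant, so that $P$ is not locally identifiable. This is where genericity does the real work: for $t$ small the combinatorial type of the $\v{C}$ech filtration of $P_t$ is the same as that of $P$ (local constancy at a generic point), so the barcode of $P_t$ is a fixed function of the critical filtration values, and those are exactly the circumradii of the hyperedges of $H_P$, which are constant along the path by construction. Hence $\PHc(P_t) = \PHc(P)$ for all small $t$, and since $P_t$ escapes the isometry orbit of $P$, the point cloud $P$ is not locally identifiable.

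**Main obstacle.** I expect the crux to be the rigorous version of ``the $\v{C}$ech barcode is locally a function of the hyperedge circumradii alone.'' One has to check (i) that at a generic $P$ the minimal enclosing ball of every relevant tuple is genuinely circumscribed (no degenerate coincidences forcing a sub-tuple's sphere), so that the filtration value really is the full-tuple circumradius and the critical cells are stable; (ii) that no two distinct critical circumradii coincide at a generic $P$, so the pairing in the barcode is locally rigid; and (iii) that the hypergraph $H_Q$ is constant on a neighborhood, which is the $\v{C}$ech counterpart of the ``$G_Q$ does not change'' difficulty flagged for Theorem~\ref{thm:globrig}, but here it should follow from elementary genericity rather than the deep result of \cite{gortler2019generic}, since we only need \emph{local} constancy, not a statement that holds across the whole fiber. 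Assembling these into a clean statement — essentially that $\PHc$ and the map $P \mapsto (\text{hyperedge circumradii of } H_P)$ have the same fibers locally at a generic $P$ — is the technical core, after which both directions follow formally from the definition of rigidity of $(H_P,P)$.
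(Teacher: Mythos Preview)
Your proposal is essentially correct and follows the same architecture as the paper: both directions reduce to the key technical fact (Proposition~\ref{prop:cechcritdists} in the paper) that for $P$ in the interior of a cell $S_\preceq$, nearby $Q$ satisfy $\PHc(Q)=\PHc(P)$ if and only if $\rho_\sigma(P)=\rho_\sigma(Q)$ for all $\sigma\in H_P$. The paper formalizes your ``combinatorial type is locally constant'' via the partition into cells $S_\preceq$ and the observation (Lemma~\ref{lem:cechcritsame}) that $H_P=H_Q$ on each cell; your obstacle~(i) is handled not by genericity but by Proposition~\ref{prop:circumradform}, which follows directly from the second clause in the \emph{definition} of critical simplex.

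The one genuine imprecision is in your $(\Rightarrow)$ direction. Non-rigidity of $(H_P,P)$ gives nearby $Q$ (or a path $P_t$) that is equivalent but not \emph{congruent} to $P$; local identifiability is phrased in terms of \emph{isometry}, so you must rule out that $Q$ is isometric-but-not-congruent via a nontrivial relabeling. The paper avoids this by proving $(\Rightarrow)$ directly rather than by contrapositive: take any nearby $Q$ satisfying the determinantal equivalence, deduce $\rho_\sigma(P)=\rho_\sigma(Q)$, deduce same barcode, conclude $Q$ isometric to $P$ by local identifiability, and then invoke Lemma~\ref{lem:isomeansconj} (for generic $P$ with distinct pairwise distances, isometric $Q$ in the same $S_\preceq$ must be congruent). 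Your path argument can be repaired along the same lines --- for generic $P$ the isometry class of $P$ is a finite disjoint union of congruence orbits, so a continuous path leaving the congruence orbit of $P$ must, for small $t$, leave the isometry class as well --- but as written, your claim that the flex is ``not contained in any isometry orbit'' is asserted rather than argued, and this is exactly the point where the congruence/isometry distinction bites.
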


We give an analogous definition of local identifiability under Vietoris-Rips persistence, which gives rise to an analogous result for Vietoris-Rips persistence, stated in Theorem \ref{thm:vrrigid}.

The rigidity of $(G_P,P)$ and $(H_P,P)$ in fact only depends on the combinatorial structure of $G_P$ or $H_P$ for generic point clouds $P$. For $G_P$ this is a well-known fact from rigidity theory that we discuss in Section \ref{sec:rigidity}. For hypergraphs $H_P$, this fact requires proof, and we establish it in Lemma \ref{lem: circumsphere generic}.

\subsection{Related Work}

Gameiro, Hiraoka and Obayashi provide one of the first results towards inverse problems; namely, if a collection of barcodes $D$ can arise from a point cloud, they provide a point cloud whose persistent homology is $D$, provided that $D$ is close to another collection of barcodes known to come from a point cloud $P$ \cite{gameiro2016continuation}. Oudot and Soloman posed the first problem about level sets of the persistence maps $\PHc$ and $\PHvr$, they asked when they are empty \cite{oudot2020inverse}. While this question is still open, in fact very little is known about the structure of these level sets when they are nonempty.
It is well known that $\PHvr$ and $\PHc$ are never unique due to \emph{isometry invariance}, i.e. Vietoris-Rips filtrations of any isometric point clouds have the same barcodes, and the same is true for \v{C}ech filtrations. 
Beyond this, Smith and Kurlin have given conditions for different point clouds to have identical barcodes for degree 1 homology \cite{smith2022families}. To our knowledge, this is all that is known about the fibers, i.e., the level sets $(\PHvr)^{-1}(D)$ and $(\PHc)^{-1}(D)$, of the persistence map on point clouds. In \cite{lim2024vietoris}, the authors study to what extent the VR barcode of a compact, connected, smooth manifold $M$ determines the geometry of $M$.  By relating the interval in the VR barcode that corresponds to the fundamental class of M to the filling radius, they obtain various sufficient conditions for the barcode to determine $M$ in the case where $M$ is a sphere. There is also a thread of research studying level sets of persistent homology applied to filtrations arising from functions, not point clouds, see \cite{curry2018fiber,cyranka2020contractibility,mischaikow2021persistent,leygonie2022fiber, leygonie2023fiber,leygonie2024algorithmic, beers2023fiber}.

\subsection{Plan of the Paper}
Section \ref{sec:prelims} reviews the notions from topology, geometry, algebra, combinatorics, graph theory and rigidity theory that we will need throughout the paper. In Section \ref{sec:ph} we discuss how persistent homology assigns barcodes to arbitrary filtrations of spaces, specializing to filtrations arising from point clouds in Section \ref{sec:filt}. Then in Section \ref{sec:spanningtrees} we discuss some results in TDA which are well known but which we could not find proven in detail in the literature. After this, we move to the prerequisite notions we need in algebra and geometry. First, we define algebraic and semialgebraic sets, and show how these notions lead to definitions of dimension and genericity in Section \ref{sec:algebra}. Then we define circumspheres and enclosing spheres of point clouds and discuss important facts we will need about these kinds of spheres in Section \ref{sec:circumsphere}. This section will in particular be useful for studying \v{C}ech persistence. We conclude our section on preliminaries with a review of rigidity theory in Section \ref{sec:rigidity}.

In Section \ref{sec:semialgebraic} we give a detailed semialgebraic setup for the PH map. One of the main results in this section is how to chop the domain of the persistence map up into semialgebraic cells on which PH is better behaved. We then establish facts about these cells that we use throughout the paper. Following, Sections \ref{sec:upperbound} and \ref{sec:lowerbound} give an upper and lower bound on the dimension of the fiber of the persistence map.

In Section \ref{sec:vrrigid} we define the critical graph $G_P$ of $P$ as the graph defined by the set of edges that appear in the Vietoris-Rips filtration of $P$ at a value of a barcode endpoint. This leads us to our main identifiability result, Theorem \ref{thm:globrig}, along with our local identifiability result for Vietoris-Rips persistence, Theorem \ref{thm:vrrigid}. In Section \ref{sec:circrigid} we develop a new rigidity theory. This allows for us to prove our local identifiability result for \v{C}ech persistence, Theorem \ref{thm:cechglr}, in Section \ref{sec:cechrigid}.

We conclude with Section \ref{sec:open}, where we discuss a few open questions arising from our work.

In the appendix we prove some elementary and well known facts about degree zero persistence for point clouds and provide references for a couple of basic, though somewhat technical, lemmas about semialgebraic sets.

\section{Preliminaries}
\label{sec:prelims}

\subsection{Persistent Homology}
\label{sec:ph}
Persistent homology is an operation which takes as input a nested collection of topological spaces called a filtration:

\begin{definition}
    A \emph{filtration} is a collection of topological spaces $\{X_t\}_{t\in \mathbb{R}}$ such that $X_s \subseteq X_t$ whenever $s \leq t$.
\end{definition}

In this paper, $H_i(\bullet)$ will always denote homology in degree $i$ over a field $\mathbb{F}$. All statements hereafter will be true regardless of the choice of $\mathbb{F}$. When we apply $H_i(\bullet)$ to the spaces in a filtration $\{X_t\}_{t\in \mathbb{R}}$ we obtain a collection of vector spaces $\{H_i(X_t)\}_{t\in\mathbb{R}}$. Moreover, the functoriality of homology implies that we have commuting linear maps $H_i(X_s) \to H_i(X_t)$ whenever $s \leq t$. This is an example of an algebraic structure called a persistence module.

\begin{definition}
    A \emph{persistence module} is a collection of vector spaces $V = \{V_t\}_{t\in\mathbb{R}}$ equipped with commuting linear maps $V_{s,t}: V_s \to V_t$ for all $s \leq t$ satisfying $V_{s,s} = \mathrm{id}$. Two persistence modules $V$ and $W$ are said to be \emph{isomorphic} if there are vector space isomorphisms $\phi_t: V_t \to W_t$ such that $\phi_t \circ V_{s,t} = W_{s,t} \circ \phi_s$.

    The persistence module $V$ is called \emph{pointwise finite dimensional} (or \emph{pfd}) if $V_t$ is finite for each $t\in \mathbb{R}$. A number $t\in \mathbb{R}$ is called a \emph{critical value} of $V$ if there does not exist an $\epsilon > 0$ such that $V_{r,s}$ is an isomorphism for all $t-\epsilon< r\leq s < t+\epsilon$. A persistence module is called \emph{tame} if it is pfd and has finitely many critical values. 
\end{definition}

We are interested in tame persistence modules for two reasons. First, they frequently arise when studying finite point clouds as we will see shortly. Second, they admit decompositions into basic persistence modules called interval modules.

\begin{definition}
    Given an interval $I\subseteq \mathbb{R}$, define the persistence module $\chi_I$ by 
    
    \begin{equation*}
    (\chi_I)_t = \begin{cases}
            \mathbb{F} & t\in I \\
            0 & \textrm{otherwise,}
        \end{cases} \qquad
    (\chi_I)_{s,t} = \begin{cases}
            \mathrm{id} & s,t\in I \\
            0 & \textrm{otherwise,}
        \end{cases}
\end{equation*}
Such a persistence module is called an \emph{interval module}.
\end{definition}

For persistence modules $V$ and $W$, the direct sum $V \oplus W$ is defined by $(V \oplus W)_t := V_t \oplus W_t$, with $(V\oplus W)_{s,t}$ being the obvious maps inherited from $V_{s,t}$ and $W_{s,t}$. The following result, which follows from Theorem 4.6 and Corollary 4.7 of \cite{bubenik2014categorification} for example, shows that any tame persistence module decomposes into a direct sum of interval modules. 

\begin{theorem}
\label{thm:pmdecomp}
If $V$ is a tame persistence module, there is a unique multiset of nonempty intervals $\mathcal{I}$ such that $V$ is isomorphic to
\begin{equation*}
    \bigoplus_{I\in \mathcal{I}} \chi_I,
\end{equation*}
and the multiset $\mathcal{I}$ is finite.
\end{theorem}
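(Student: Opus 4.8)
The plan is to reduce the statement to the classical structure theory of representations of a linearly oriented type-$A$ quiver, which I can then cite (Gabriel) or reprove by an elementary inductive argument. The reason this works is that tameness lets us discard all the ``uninteresting'' real parameters and keep only a finite diagram.

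First I would build a finite model. Let $t_1 < \cdots < t_m$ be the critical values of $V$ and set $t_0 = -\infty$, $t_{m+1} = +\infty$. From the definition of critical value, a short covering argument shows that $V_{r,s}$ is an isomorphism whenever $t_{i-1} < r \le s < t_i$ for some $i$. Choose sample points $a_i \in (t_i, t_{i+1})$ for $0 \le i \le m$, so that $a_0 < t_1 < a_1 < \cdots < t_m < a_m$, and let $\widehat V$ be the restriction of the functor $V$ to this finite sub-poset of $2m+1$ points. Then $\widehat V$ is a representation of the quiver $A_{2m+1}$ with all arrows oriented the same way, and since $V$ is pfd, $\widehat V$ has finite total dimension.

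Next I would prove that restriction is an equivalence $\rho \colon \mathcal C \to \mathrm{Rep}(A_{2m+1})$, where $\mathcal C$ is the category of tame persistence modules all of whose critical values lie in $\{t_1,\dots,t_m\}$. The quasi-inverse is \emph{constant extension}: from a representation $W_0 \to W_1 \to \cdots \to W_{2m}$ build the module equal to $W_0$ on $(-\infty,t_1)$, to $W_1$ at $t_1$, to $W_2$ on $(t_1,t_2)$, and so on, with identity transition maps between consecutive sample points; its critical values clearly lie in $\{t_i\}$. That these functors are mutually quasi-inverse is exactly where tameness enters: a morphism between objects of $\mathcal C$ is determined by its components at the $2m+1$ sample points and conversely any compatible family there extends uniquely, because all intervening transition maps are isomorphisms. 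I expect this equivalence — checking that constant extension is well defined and functorial, and that morphisms are detected at the sample points — to be the main technical point, though it is routine.

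Finally I would apply the classical fact that $\mathrm{Rep}(A_{2m+1})$ is Krull--Schmidt with indecomposables exactly the interval representations $I_{[p,q]}$ ($\mathbb F$ placed on the contiguous block of vertices $p,\dots,q$ with identity maps inside). Thus $\widehat V \cong \bigoplus_k I_{[p_k,q_k]}$ with the multiset $\{[p_k,q_k]\}$ unique and finite. Transporting back through $\rho^{-1}$, a vertex interval $[p,q]$ corresponds to a real interval $J \subseteq \mathbb R$: its left end is closed at $t_i$ if $p$ is the critical sample point $t_i$, is open at $t_i$ if $p$ is a gap sample point $a_i$ with $i \ge 1$, and is $-\infty$ if $p = a_0$, and symmetrically on the right. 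One checks $\rho^{-1}(I_{[p,q]}) \cong \chi_J$, so $V \cong \bigoplus_k \chi_{J_k}$, a finite direct sum of nonempty interval modules; this proves existence and finiteness. For uniqueness, note that $\chi_J$ recovers $J$ as $\{t : (\chi_J)_t \neq 0\}$, so distinct intervals give non-isomorphic summands, and combine this with the uniqueness already obtained on the quiver side (or, since we now know the decomposition is finite and each $\chi_J$ has endomorphism ring $\mathbb F$, which is local, apply Krull--Schmidt directly to $V$). As an alternative for existence one could instead invoke the decomposition theorem for pfd persistence modules and then deduce finiteness from the fact that every interval summand has its endpoints in $\{t_1,\dots,t_m\}\cup\{\pm\infty\}$ while each $V_t$ is finite dimensional; but the reduction to finite quivers yields existence, finiteness, and uniqueness in one self-contained package.
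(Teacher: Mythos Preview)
Your proof is correct, but it differs from the paper's treatment in a basic way: the paper does not prove this theorem at all. It is stated as a background result and attributed to external references---specifically Theorem~4.6 and Corollary~4.7 of Bubenik--Scott (\emph{Categorification of persistent homology}), with the remark that it also follows from Zomorodian--Carlsson and from Crawley-Boevey's decomposition theorem for pfd modules. So where the paper simply cites, you have supplied a self-contained argument.

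Your route---restrict $V$ to a finite chain of $2m+1$ sample points, identify this with a representation of an equioriented $A_{2m+1}$ quiver, invoke Gabriel/Krull--Schmidt there, and transport back through the equivalence $\rho$---is standard and correct; it is essentially the idea behind the Zomorodian--Carlsson reference, recast in quiver language, and is more explicit than what the paper asks of its reader. The alternative you mention at the end (quote the Crawley-Boevey theorem for all pfd modules and then deduce finiteness from tameness) is closer in spirit to the paper's preferred citation. Either works; your version has the advantage of being self-contained for a reader who knows basic Krull--Schmidt but not the general pfd decomposition theorem, at the cost of the bookkeeping needed to set up and verify the equivalence $\rho$.
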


We remark that this result also follows fairly directly from the classic \cite{zomorodian2004computing}, and the from the more general main result of \cite{crawley2015decomposition}. However the statement of the decomposition theorem given in \cite{bubenik2014categorification} is most convenient for our purposes. The above theorem motivates the following definition.

\begin{definition}
    A \emph{barcode} is a multiset of nonempty intervals in the real line. A \emph{full barcode} is a sequence of barcodes $D = \{D_i\}_{i = 0}^{\infty}$.
\end{definition}

Using Theorem \ref{thm:pmdecomp} we may finally arrive at the definition of persistent homology.

\begin{definition} 
    For a filtration $\mathcal{X} = \{X_t\}_{t\in\mathbb{R}}$, let $H_i(\mathcal{X})$ denote the persistence module $\{H_i(X_t)\}_{t\in\mathbb{R}}$, and suppose this persistence module is tame. The \emph{persistent homology in degree $i$} of $\mathcal{X}$ is the barcode $D_i$ satisfying
    \begin{equation*}
        H_i(\mathcal{X}) \cong \bigoplus_{I\in D_i} \chi_I.
    \end{equation*}
    If $H_i(\mathcal{X})$ is tame for each $i\geq 0$, the \emph{persistent homology} of $\mathcal{X}$ is the full barcode $D = \{D_i\}_{i = 1}^{\infty}$.
\end{definition}

\subsection{Filtrations for Point Clouds}
Point clouds are the central objects of interest to this paper, and we define them as follows.

\begin{definition}
We define $\PC$ as
\begin{equation*}
    \PC = \{(p_1,\ldots, p_n)\in \mathbb{R}^{nd} : \; p_i \in \mathbb{R}^d \text{ and } p_i \neq p_j\; \forall i \neq j\},
\end{equation*}
the configuration space of $n$ points in $\mathbb{R}^d$. Any element of $\PC$ is called a \emph{point cloud}\footnote{Strictly speaking $\PC$ is the space of \emph{ordered} point clouds of $n$ points in $\mathbb{R}^d$. By quotienting by the action of the symmetric group $S_n$, acting on $\PC$ by permuting the order of the $p_i$, we may obtain the space of unordered point clouds of $n$ points in $\mathbb{R}^d$, $\PC/S_n$.} (of $n$ points in dimension $d$).
\end{definition}

\label{sec:filt}
Point clouds give rise to filtrations, and these filtrations allow us to assign barcodes to point clouds. The following is a kind of filtration arising from point clouds.

\begin{definition}
    For $x\in \mathbb{R}^d$, led $B_r(x)$ denote the closed ball of radius $r$ centered at $x$. Given $P = (p_1,\ldots,p_n) \in \PC$ the \emph{\v{C}ech filtration of $P$} is the filtration $\{P_r\}_{r\in\mathbb{R}}$, where
    \begin{equation*}
        P_r = \bigcup_{i=1}^n B_r(p_i).
    \end{equation*}
    We denote the persistent homology in degree $i$ of this filtration by $\PHc_i(P)$, should it be well defined, and the persistent homology of this filtration by $\PHc(P)$, should it well defined.
\end{definition}

We will see that $\PHc_i(P)$ and $\PHc(P)$ always exist by using a combinatorial reformulation of the \v{C}ech filtration of a point cloud, but before we introduce it, we need the following definition.

\begin{definition}
Let $K_0$ be a finite set.  An (abstract) \emph{simplicial complex} $K$ is a set of nonempty subsets $\sigma \subseteq K_0$ such that if $\tau \subseteq \sigma$ and $\sigma\in K$, then $\tau\in K$.  An element $\sigma \in K$ is called a \emph{simplex} (pl. simplices),  
and it is called a $p$-simplex if $\sigma$ has $p+1$ elements exactly. The $p$-skeleton of $K$, denoted $K_p$
is the simplicial complex consisting of all $j$-simplices of $K$, for $j\leq p$.  The \emph{dimension} of $K$ is the 
maximum $d$ such that $K$ contains a $d$-simplex.

A $d$-dimensional simplicial complex $K$ has a \emph{geometric realization}.  Let $f: K_0\to \RR^{2d + 1}$ be an injective mapping 
from $K_0$ to a point set such that any $2d +2$ of the points in $f(K_0)$ are affinely independent. The geometric realization of $K$ is 
\[
    \|K\| = \bigcup_{\sigma\in K} \operatorname{conv} \{f(v) : v\in K_0\cap \sigma\}
\]
with the subspace topology from the ambient $\RR^{2d + 1}$.  (All the choices of $f$ give an equivalent space up 
to a PL homeo, for example.)\footnote{This construction is outlined in detail in \cite[Chapter 3.1]{edelsbrunner2022computational}}
\end{definition}

Note that the 1-skeleton of a simplicial complex is a graph. It is well known that at the level of homology, the \v{C}ech filtration is a filtration of simplicial complexes. For convenience we will let $[n]$ denote the set $\{1,\ldots, n\}$ throughout. We will also denote by $|K|$ the number of simplices in $K$. Of particular interest to us will be the simplicial complex whose simplices are the nonempty subsets of $[n]$. This simplicial complex is called the complete simplicial complex on $n$ vertices. We denote this simplicial complex by $K(n)$.

\begin{definition}
For $P = (p_1,\ldots, p_n) \in \PC$, let $\C(P,r)$ denote the simplicial complex
\begin{equation*}
    \{\sigma \subseteq [n] : \text{there exists } q\in \mathbb{R}^d \text{ such that } d(p_i,q) \leq r  \text{ for all } i \in \sigma\}.
\end{equation*}

These simplicial complexes define a filtration $\{\C(P,r)\}_{r\in \mathbb{R}}$.
\end{definition}

The following, which is a result of \cite[Section 3.1]{chazal2008towards}, shows that indeed the filtration $\{\C(P,r)\}_{r\in \mathbb{R}}$ of a point cloud has the same persistent homology as its \v{C}ech filtration.

\begin{proposition}
    \label{prop:nerveiso}
    The persistence modules $\{H_i(P_r)\}_{r \in \mathbb{R}}$ and $\{H_i(\C(P,r))\}_{r \in \mathbb{R}}$ are isomorphic for each $i\geq 0$.
\end{proposition}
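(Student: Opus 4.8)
The plan is to prove Proposition \ref{prop:nerveiso} by recognizing that the simplicial complex $\C(P,r)$ is precisely the nerve of the closed cover $\{B_r(p_i)\}_{i=1}^n$ of the space $P_r = \bigcup_i B_r(p_i)$, and then invoking the nerve theorem. First I would unpack the definition: a subset $\sigma \subseteq [n]$ lies in $\C(P,r)$ exactly when there exists $q \in \RR^d$ with $d(p_i,q) \le r$ for all $i \in \sigma$, which is the same as saying $\bigcap_{i\in\sigma} B_r(p_i) \neq \emptyset$. Hence $\C(P,r)$ is literally the nerve $\mathcal{N}(\{B_r(p_i)\})$. To apply the nerve theorem in the form needed (a homotopy equivalence $\|\mathcal{N}\| \simeq P_r$, not merely a homology isomorphism), I would verify the good-cover hypothesis: each finite intersection $\bigcap_{i\in\sigma} B_r(p_i)$ of closed balls in $\RR^d$ is convex, hence either empty or contractible. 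This is the key geometric input, and it is elementary since an intersection of convex sets is convex and a nonempty closed convex set is contractible (it deformation retracts to any of its points along straight lines).

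Next I would address the technical point that the classical nerve theorem is usually stated for open covers, whereas $\{B_r(p_i)\}$ is a closed cover. There are two standard routes. One is to cite a version of the nerve theorem valid for closed covers of paracompact (in particular, metrizable) spaces by closed sets with contractible finite intersections — this is exactly the setting of \cite{chazal2008towards}, which the proposition already cites, so it suffices to point to Section 3.1 there and note that the cover is finite and each $P_r$ is a compact subset of $\RR^d$. The alternative is to thicken: replace each $B_r(p_i)$ by the open ball $B_{r+\delta}(p_i)$, observe the nerve is eventually stable as $\delta \to 0^+$ for all but finitely many values of $r$, and pass to a limit; but this is more fiddly and the cited route is cleaner, so I would use the closed nerve theorem directly.

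Having established a homotopy equivalence $\|\C(P,r)\| \simeq P_r$ for each fixed $r$, the final step is to promote this to an isomorphism of persistence modules, i.e.\ to check naturality with respect to the inclusions $r \le r'$. The nerve theorem, in the functorial form used in topological data analysis, produces a homotopy equivalence that commutes up to homotopy with the inclusions of covers: the inclusion $B_r(p_i) \hookrightarrow B_{r'}(p_i)$ induces a map of nerves $\C(P,r) \hookrightarrow \C(P,r')$ (a subcomplex inclusion, since the intersection condition only gets easier as $r$ grows) compatible with $P_r \hookrightarrow P_{r'}$, and applying $H_i(-)$ to the resulting commuting-up-to-homotopy square of spaces yields a genuinely commuting square of vector spaces. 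Assembling these over all $r$ gives the natural isomorphism $\{H_i(P_r)\}_r \cong \{H_i(\C(P,r))\}_r$. I expect the main obstacle to be purely expository rather than mathematical: stating the functorial/persistent version of the nerve theorem precisely enough (closed cover, compact spaces, naturality in the cover) that the compatibility with the filtration maps is manifest, rather than leaving it as "clear". Since the proposition is explicitly attributed to \cite[Section 3.1]{chazal2008towards}, the cleanest write-up simply cites that reference for the persistent nerve isomorphism after verifying the convex-intersection hypothesis, and remarks that finiteness of the cover together with compactness of each $P_r$ ensures all the regularity assumptions are met.
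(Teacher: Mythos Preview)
Your proposal is correct and matches the standard argument. Note, however, that the paper does not actually give its own proof of this proposition: it simply states the result and attributes it to \cite[Section 3.1]{chazal2008towards}, so there is nothing to compare against beyond observing that your outline (nerve of a convex closed cover, good-cover hypothesis via convexity, functorial nerve theorem for naturality in $r$) is precisely the content of that reference.
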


It is easily observed that $\{H_i(\C(P,r))\}_{r\in\mathbb{R}}$ is tame, so $\PHc_i(P)$ and $\PHc(P)$ are defined for all $P\in \PC$ by Proposition \ref{prop:nerveiso}.

The Vietoris-Rips filtration is another commonly used filtration which is also defined simplicially:

\begin{definition}
    For $P = (p_1,\ldots, p_n) \in \PC$, let $\VR(P,r)$ denote the simplicial complex
    \begin{equation*}
        \{\sigma\subseteq [n]: d(p_i,p_j)\leq 2r \text{ for all }i,j\in\sigma\}.
    \end{equation*}
    We refer to the filtration $\{\VR(P,r)\}_{r\in\mathbb{R}}$ as the \emph{Vietoris-Rips filtration of $P$}. We denote the persistent homology in degree $i$ of this filtration by $\PHvr_i(P)$, and the persistent homology of this filtration by $\PHvr(P)$.
\end{definition}

Clearly $\PHvr_i(P)$ and $\PHvr(P)$ are well defined since $\{H_i(\VR(P,r))\}_{r\in\mathbb{R}}$ is tame. It is easily checked that the \v{C}ech and Vietoris-Rips filtrations have identical 1-skeletons. For this reason, it is an immediate consequence of the theory of simplicial homology that $\PHc_0(P) = \PHvr_0(P)$ for any point cloud $P$. We note that $\PHvr(P)$ and $\PHc(P)$ are in general \emph{not} identical. The advantage of $\PHc$ is that it reflects the changing topology of the spaces $P_r$, whereas the advantage of $\PHvr$ is that it is easier to compute for high dimensional point clouds \cite{bauer2021ripser}.

We conclude the subsection with a discussion of how to use a function on a simplicial complex to get a filtration. We will identify maps $f:K \to \mathbb{R}$ with elements of $\mathbb{R}^{|K|}$, where the $\sigma^{\mathrm{th}}$ coordinate of $f$ when viewed this way, denoted $f_\sigma$, is the number $f(\sigma)$. If $f$ satisfies that $f_\tau \leq f_\sigma$ whenever $\tau\subseteq\sigma$, then it follows that the sets $\{\sigma \in K:f_\sigma \leq t\}_{t\in\mathbb{R}}$ are simplicial complexes, motivating the following definition.

\begin{definition}
    Let $K$ be a simplicial complex with $f \in \mathbb{R}^{|K|}$ such that $f_\tau \leq f_\sigma$ whenever $\tau\subseteq\sigma$. Then $f$ is called an \emph{order preserving map} and we denote by $\PH_i(f)$ the persistent homology in degree $i$ of the filtration of simplicial complexes
    \begin{equation*}
        \{\sigma \in K:f_\sigma \leq t\}_{t\in\mathbb{R}}.
    \end{equation*}
    We denote by $\PH(f)$ the persistent homology of this filtration.

    A map $\Phi:\PC\to\mathbb{R}^{|K|}$ is called an \emph{order preserving map parametrized by $\PC$} if $\Phi(P)$ is an order preserving map for each $P$.
\end{definition}
For example, defining $\Phi^{\VR}:\PC \to \mathbb{R}^{|K(n)|}$ by \begin{equation*}
    \Phi^{\VR}_\sigma(P) := \max_{i,j\in\sigma}\frac{1}{2}d(p_i,p_j),
\end{equation*}
then we have $\PH(\Phi^{\VR}(P)) = \PH^{\VR}(P)$.

Similarly, defining $\Phi^{\C}:\PC \to \mathbb{R}^{|K(n)|}$ by \begin{equation*}
    \Phi^{\C}_\sigma(P) := \min\Big\{r\in \mathbb{R}: \text{ there exists } q\in \mathbb{R}^d \text{ such that } d(p_i,q) \leq r  \text{ for all } i \in \sigma\Big\},
\end{equation*}
then we have $\PH(\Phi^{\C}(P)) = \PH^{\C}(P)$. In Section \ref{sec:semialgebraic} we will prove a general structure theorem about the following kinds of sets
\begin{align*}
    (\PH_i\circ\Phi)^{-1}(B) &:= \{P \in \PC: \PH_i(\Phi(P)) = B\}\\
    (\PH\circ\Phi)^{-1}(D) &:= \{P \in \PC: \PH(\Phi(P)) = D \},
\end{align*}
where $\Phi:\PC\to\mathbb{R}^{|K|}$ is an order preserving map on a simplicial complex $K$ parametrized by $\PC$, $B$ is a barcode, and $D$ is a full barcode.

\subsection{Persistence and Minimal Spanning Trees}
\label{sec:spanningtrees}

The goal of this section is to state two results in topological data analysis which are fairly elementary, we believe to be widely known to be true, that we will need later. Proofs of results in this subsection are given in the appendix for completeness. First we establish a basic fact about degree zero \v{C}ech and Vietoris-Rips barcodes. 

\begin{restatable}{lemma}{hzerofacts}
    \label{lem:basicH0}
    Let $P = \{p_1,\ldots,p_n\}\in\PC$. Let $D_0 = \PHc_0(P)$  (or equivalently $D_0 = \PHvr_0(P)$). The barcode $D_0$ has $n$ intervals, each with a closed left endpoint and open right endpoint. All of these intervals have left endpoint equal to 0. One of these intervals has $\infty$ as its right endpoint. All other intervals in $D_0$ have a positive right endpoint.
\end{restatable}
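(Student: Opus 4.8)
The plan is to prove Lemma~\ref{lem:basicH0} by analysing the degree-zero persistence module of the \v{C}ech (equivalently Vietoris--Rips) filtration directly in terms of connected components of the $1$-skeleton, and then invoking the standard correspondence between $H_0$-persistence and a minimum spanning tree of the complete weighted graph on $P$. First I would fix notation: write $G(r)$ for the $1$-skeleton of $\C(P,r)$, which by the remarks in Section~\ref{sec:filt} coincides with the $1$-skeleton of $\VR(P,r)$, so that the edge $\{i,j\}$ enters at scale $r = \tfrac12 d(p_i,p_j)$ (for \v{C}ech the threshold for a $1$-simplex $\{i,j\}$ is also $\tfrac12 d(p_i,p_j)$, since the smallest enclosing ball of two points is the one with their segment as diameter). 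Since $H_0$ depends only on the $1$-skeleton, $H_0(\C(P,r)) \cong \mathbb{F}^{c(r)}$ where $c(r)$ is the number of connected components of $G(r)$, and the structure maps $H_0(\C(P,r)) \to H_0(\C(P,s))$ for $r \le s$ are the surjections induced by merging components.

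The key steps, in order, are: (1) observe $c(r) = n$ for $r < 0$ (no edges, and in fact $\C(P,r) = \emptyset$ for $r<0$, but $H_0$ is computed from the vertex set which appears at $r=0$) — more carefully, every vertex $\{i\}$ enters at $r = 0$ since a single point is enclosed by a ball of radius $0$, so $G(r)$ has no vertices for $r<0$ and exactly $n$ vertices and no edges at $r = 0$ unless two points coincide, which is excluded in $\PC$; hence $H_0(\C(P,r)) = 0$ for $r < 0$ and $\cong \mathbb{F}^n$ at $r = 0$. (2) As $r \to \infty$, $G(r)$ becomes the complete graph, hence connected, so $c(r) = 1$ eventually; the component containing everything yields a single interval with right endpoint $\infty$. (3) Each time $r$ crosses a value where the number of components drops (which happens exactly at some subset of the values $\tfrac12 d(p_i,p_j)$), one interval module dies; a standard counting argument (the rank of $H_0$ drops by one at each such merge, and there are $n-1$ merges total to go from $n$ components to $1$) shows there are exactly $n$ intervals: one unbounded and $n-1$ bounded, each with left endpoint $0$ (since every component is born at $r=0$, as all vertices are born there and no component is born later) and closed on the left, open on the right (a component, once merged, is immediately identified with the larger component, so the death is at the merge value and not attained — this is the usual half-open convention, and follows formally from the interval-decomposition theorem applied to a module whose structure maps are isomorphisms on half-open sub-intervals). (4) Finally, the bounded right endpoints are positive because the smallest merge value is $\tfrac12 \min_{i\ne j} d(p_i,p_j) > 0$, as the points are distinct.

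The cleanest way to package steps (3)--(4) is to invoke the elbow/minimum-spanning-tree description of $H_0$ persistence: the multiset of finite death times of $\PH_0$ equals the multiset of edge weights $\tfrac12 d(p_i,p_j)$ of the edges in a minimum spanning tree $T$ of the complete graph on $\{p_1,\dots,p_n\}$ weighted by $\tfrac12 d(p_i,p_j)$. Since $T$ has $n-1$ edges all of positive weight, this immediately gives $n-1$ bounded intervals with positive right endpoints, plus the one unbounded interval, for $n$ intervals total; and all left endpoints are $0$ and closed, all right endpoints open, by the half-open convention established by the decomposition theorem. I would state this MST fact as a sublemma (its proof, via the Kruskal/Prim merging process matching the component-merging of the filtration, is exactly the kind of elementary-but-folklore argument the paper defers to the appendix). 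The main obstacle — though it is more bookkeeping than genuine difficulty — is being careful about the endpoint conventions: ensuring the intervals are genuinely half-open $[0, \tfrac12 w)$ rather than, say, $[0,\tfrac12 w]$, which requires checking that at the merge scale $r = \tfrac12 w$ the two components have \emph{already} been identified in $H_0(\C(P,r))$ (true, since the simplicial complexes are defined with a non-strict inequality $f_\sigma \le t$, so the edge is present \emph{at} its filtration value), so the ``younger'' generator is no longer independent at $r$, forcing the half-open right end. A secondary subtlety is handling possible ties among the distances $d(p_i,p_j)$, i.e. when several merges happen at the same scale; here the multiset language of barcodes absorbs the issue cleanly, since the interval-decomposition theorem (Theorem~\ref{thm:pmdecomp}) already delivers a well-defined multiset regardless of coincidences, and the MST edge-weight multiset is likewise well-defined even when the MST is not unique.
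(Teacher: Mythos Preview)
Your proposal is correct, and the core observations you isolate---vertices all appear at $r=0$, the filtration is empty for $r<0$, eventual connectedness for large $r$, surjectivity of the induced maps on $H_0$, and right-continuity of the filtration forcing open right endpoints---are exactly the ones the paper uses. The paper's proof, however, is considerably more economical: it simply lists these four filtration facts and reads off the corresponding four barcode facts directly, then adds one sentence on right-continuity. It does \emph{not} invoke the minimum-spanning-tree description for this lemma at all; that correspondence is reserved for the separate Lemma~\ref{lem:minspantree}. Your MST packaging works but is heavier than necessary here, and slightly inverted relative to the paper's logical order (the paper proves Lemma~\ref{lem:basicH0} first, with no MST input, and only then establishes the MST correspondence). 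If you drop the MST layer and keep just your steps (1)--(2) together with the surjectivity and right-continuity remarks, you recover the paper's argument almost verbatim.
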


The next result discussed here (mentioned in \cite{cultrera2024chromatic}, and \cite{elkin2020mergegram} in a slightly different context, for example) is widely known, but a little less clear. We will state it using the following language.

\begin{definition}
    A \emph{tree} is a connected cycle free graph. Let $P = \{p_1,\ldots,p_n\}\in\PC$. A \emph{minimal spanning tree} of $P$ is a tree whose vertex set is $[n]$ and whose edge set $E$ satisfies
    \begin{equation*}
        \sum_{\{i,j\}\in E}d(p_i,p_j) = \min \Big\{\sum_{\{i,j\}\in E'}d(p_i,p_j): [n] \text{ and } E' \text{are the vertices and edges of a tree.}\Big\}.
    \end{equation*}
\end{definition}

The intuition behind this definition is that a minimal spanning tree has the smallest cumulative length of edges while still spanning the metric space $P$. The result we need says that the edge lengths of a minimal spanning tree are specified by barcodes arising from $P$.

\begin{restatable}{lemma}{minspantree}
\label{lem:minspantree}
    Let $P = \{p_1,\ldots,p_n\}\in\PC$. Let $D_0 = \PHc_0(P)$  (or equivalently $D_0 = \PHvr_0(P)$). Let $r_1<\ldots<r_m$ denote the distinct finite right endpoints of intervals in $D_0$ and let $\mu_i$ denote the multiplicity with which $r_i$ appears as a right endpoint in $D_0$. Let $T$ be a minimal spanning tree of $P$. Then the multiset of edge lengths of $T$ consists of the numbers $2r_1,\ldots,2r_m$ where $2r_i$ appears with multiplicity $\mu_i$.
\end{restatable}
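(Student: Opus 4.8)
The plan is to relate the degree-zero persistence of $P$ to the classical connectivity filtration of the complete graph on $[n]$ weighted by pairwise distances, and then invoke the well-known Kruskal-type description of minimal spanning trees. First I would recall that, by Lemma \ref{lem:basicH0}, $D_0$ has exactly $n$ intervals, all with left endpoint $0$, one with right endpoint $\infty$, and the remaining $n-1$ with finite positive right endpoints; so the data $(r_1,\ldots,r_m;\mu_1,\ldots,\mu_m)$ satisfies $\sum_i \mu_i = n-1$, which matches the number of edges in a spanning tree of $[n]$.

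Next I would pass to the $1$-skeleton. Since $\PHc_0(P)=\PHvr_0(P)$ and both filtrations have the same $1$-skeleton, the persistence module $\{H_0(\C(P,r))\}_r$ is computed entirely from the graph filtration $\{G_r\}_r$, where $G_r$ has vertex set $[n]$ and an edge $\{i,j\}$ once $r \geq \tfrac12 d(p_i,p_j)$. The rank of $H_0(G_r)$ equals the number of connected components of $G_r$; this drops by one precisely at each value $r$ where adding the edges of length $2r$ merges two previously distinct components, and the number of such merges at a given $r$ is the multiplicity with which $r$ occurs as a right endpoint in $D_0$ (here one uses that a right endpoint of an interval in $D_0$ is exactly a value at which a connected component ``dies'' by merging into an older one — the elder rule). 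So $\mu_i$ equals the number of component-merging events occurring at scale $r_i$ in the graph filtration.

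Then I would run Kruskal's algorithm on the complete graph with edge weights $w(\{i,j\})=d(p_i,p_j)$: process edges in increasing order of weight, adding an edge iff it connects two distinct components. This produces a minimal spanning tree $T$ (for genericity-free correctness one should note that even when several edges share a weight, any minimal spanning tree has, for each weight value $w$, the same number of edges of weight $w$ — this is a standard matroid fact about the graphic matroid, and it is what makes ``the multiset of edge lengths of $T$'' well defined). The edge added at the step where weight $w$ first causes a merge corresponds exactly to a component-merging event in the graph filtration $\{G_r\}_r$ at scale $r = w/2$, and conversely. Hence the number of edges of $T$ of length $2r_i$ equals the number of merging events at scale $r_i$, which by the previous paragraph is $\mu_i$; and there are no edges of $T$ of any other length, since $r_1,\ldots,r_m$ exhaust the scales at which merges occur. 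This yields the claimed equality of multisets.

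The main obstacle I anticipate is the bookkeeping around repeated edge lengths and the precise matching between ``barcode right endpoints with multiplicity'' and ``Kruskal merge events with multiplicity.'' When $P$ is not generic, several pairs $\{i,j\}$ can realize the same distance and several independent merges can happen simultaneously at one scale; I must argue (i) that the number of independent merges at scale $r$ is independent of the order in which simultaneous edges are processed — again the graphic matroid exchange property — and (ii) that this common number equals $\mu_i$ via the structure of $H_0$ persistence (the barcode of a filtered graph is determined by the sequence of component counts together with the elder rule, and the multiplicity of $r$ as a death is exactly the jump $\dim H_0(G_{r^-}) - \dim H_0(G_{r})$ restricted to deaths, i.e. the number of merges). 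Making (ii) rigorous is cleanest by citing Theorem \ref{thm:pmdecomp} for the tame module $\{H_0(G_r)\}_r$ and noting all intervals are half-open of the form $[0,c)$ by Lemma \ref{lem:basicH0}, so that the number of intervals with right endpoint $r$ is exactly the drop in dimension at $r$.
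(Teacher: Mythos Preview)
Your approach is essentially the same as the paper's: both pass to the $1$-skeleton filtration, run a Kruskal-style construction to produce a minimum spanning tree whose added edges correspond exactly to the component-merging events, and identify the number of merges at scale $r_i$ with the multiplicity $\mu_i$ via the drop in $\dim H_0$ (which is clean here because, by Lemma~\ref{lem:basicH0}, every interval starts at $0$). The only notable difference is that you appeal to the standard matroid fact that all minimum spanning trees share the same multiset of edge weights, whereas the paper proves this directly by showing that the sorted edge-weight sequence of any spanning tree dominates that of the constructed tree coordinatewise; your shortcut is legitimate and saves a paragraph, while the paper's version is fully self-contained.
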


It is explained how this fact follows from Kruskal's algorithm in \cite[Section 2.3]{cultrera2024chromatic}. For completeness, we nevertheless give a detailed proof of the lemma in the appendix.

\subsection{Semialgebraic sets, dimension, and genericity}
\label{sec:algebra}

Throughout we will consider only polynomials with real coefficients and work with the following kinds of sets.
\label{sec:dimdef}
\begin{definition}
    A set $S \subseteq\mathbb{R}^m$ is called \emph{algebraic} if $S$ is of the form
    \begin{equation*}
        \{x \in \mathbb{R}^m:f_1(x) = \ldots = f_k(x) = 0\},
    \end{equation*}
    where each $f_i$ is a polynomial in $x$.
    
    A set $S\subseteq \mathbb{R}^m$ is called \emph{semialgebraic} if $S$ is a finite of union of sets of the form
    \begin{equation}
    \label{eqn:basicsemialg}
        \{x \in \mathbb{R}^m: f_1(x) = \ldots = f_k(x) = 0,\; g_1(x)>0,\ldots,g_l(x)> 0\},
    \end{equation}
    where each $f_i$ and $g_i$ is a polynomial in $x$.

    If $S\subseteq \mathbb{R}^m$, function $f:S \to \mathbb{R}^k$ is called semialgebraic if its graph, viewed as a subset of $\mathbb{R}^{m+k}$ is semialgebraic.

    The \emph{Zariski closure} of a set $S\subseteq \mathbb{R}^m$ is the intersection of all algebraic sets containing $S$.
\end{definition}

One can show that the Zariski closure of any set is algebraic.

It can be checked also that finite unions of sets as in Equation \ref{eqn:basicsemialg}, but with any number of the $g_i(x)>0$ replaced with $g_i(x) \neq 0$ or $g_i(x) \geq 0$, are also semialgebraic. It follows that the intersection of any semialgebraic set with $\PC$ is semialgebraic. We state some basic results about semialgebraic sets, all of which are discussed in detail in \cite{bochnak2013real}.

Semialgebraic sets have the following basic properties
\begin{enumerate}
    \item Finite unions, finite intersections, and complements of semialgebraic sets are semialgebraic.
    \item If $A\subseteq \mathbb{R}^m$ and $B \subseteq \mathbb{R}^n$ are semialgebraic, so is $A \times B \subseteq \mathbb{R}^{m+n}$.
    \item If $\pi:\mathbb{R}^{d+1} \to \mathbb{R}^d$ is an axis-aligned projection map, $\pi$ sends semialgebraic sets to semialgebraic sets\footnote{This result is significantly more challenging to show than others in this list and is known as the Tarski-Seidenberg theorem, see for example \cite[Proposition 5.2.1]{bochnak2013real}}.
\end{enumerate}

These facts are powerful tools for proving that a set is semialgebraic. Here is a simple but important example of how these facts may be used to show a set is semialgebraic.

\begin{example}
    Any rational function $f:A\to \mathbb{R}$ is semialgebraic, provided the domain $A$ of $f$ is also semialgebraic. To see this, write $f(x) = g(x)/h(x)$ for $g, h$ polynomials. The graph of $f$ is the set
    \begin{equation*}
        \Gamma = \{(x,y) \in A \times \mathbb{R}: yh(x) = g(x)\}.
    \end{equation*}
    If $A\subseteq \mathbb{R}^m$, then $\Gamma$ is the intersection of semialgebraic sets $A \times \mathbb{R}$ and $\{(x,y) \in \mathbb{R}^{m+1}: yh(x) - g(x) = 0\}$. So $\Gamma$ is semialgebraic and therefore $f$ is semialgebraic as well.
\end{example}

Whats more, one of the properties that semialgebraic sets enjoy as a consequence of the three above facts is that any set that can defined by a sentence in first order logic with polynomial equalities and inequalities is semialgebraic. The process of constructing the semialgebraic set corresponding to such a sentence is called \emph{quantifier elimination}. To show the reader why and how this works, we present a simple example of quantifier elimination in action, and refer the reader to \cite{bochnak2013real} for the general details.

\begin{example}
    Let $f = (f_1,f_2):\mathbb{R} \to \mathbb{R}^2$ be a polynomial map. Then the set of points not in the image of $f$, i.e. the set 
    \begin{equation*}
        S : =\{(x_1,x_2)\in \mathbb{R}^2 : (\neg \exists \, y :(f_1(y) = x_1))\wedge (f_2(y) = x))\}
    \end{equation*}
    is semialgebraic.

    We can get rid of the `not' symbol $\neg$ in $S$ by taking a complement, i.e. we have $S = \mathbb{R}^2 - X$. Where $X:=\{(y_1,y_2)\in \mathbb{R}^2 : ( \exists \, y :(f_1(x) = y_1))\wedge (f_2(x) = y_2))\}$. If $X$ is semialgebraic, then so is $S$, since semialgebraic sets are closed under relative complements. Now we let
    \begin{equation*}
    Y:= \{(x,y_1,y_2)\in \mathbb{R}^3 : (f_1(x) = y_1))\wedge (f_2(x) = y_2))\}.    
    \end{equation*}
    It suffices to show $Y$ is semialgebraic since $X$ is a projection of $Y$. Finally, $Y$ is the intersection of $Z_1:=\{(x,y_1,y_2)\in \mathbb{R}^3: f_1(x) = y_1\}$ and $Z_2:=\{(x,y_1,y_2)\in \mathbb{R}^3: f_2(x) = y_2\}$, and so is semialgebraic, since $Z_1$ and $Z_2$ are semialgebraic (and in fact algebraic). Therefore $S$ is semialgebraic.
\end{example}

The topological interior, closure, and boundary of a semialgebraic set (with respect to the Euclidean topology) are all semialgebraic. Semialgebraic sets can be partitioned into finitely many sets, each homeomorphic to $(0,1)^k$, for $k$ potentially varying among the sets. In fact, semialgebraic sets admit Whitney stratifications, but we will not need this.

\begin{definition}
    If $S$ is a semialgebraic set partitioned into $r$ sets $S_1,\ldots, S_r$, with $S_i$ homeomorphic to $(0,1)^{k_i}$, then the \emph{dimension} of $S$, denoted $\dim S$, is defined to be $\max_{i}(k_i)$. If $S$ is empty, the dimension of $S$ is defined to be $-\infty$.
\end{definition}

It turns out that the dimension of $S$ does not depend on the choice of partition. Moreover, if $S$ is also a smooth real manifold of dimension $k$, then the manifold dimension of $S$ and the dimension of $S$ as a semialgebraic set agree. As such, the dimension of $\PC$ is $nd$. Furthermore the dimension of $S$ also agrees with its algebraic dimension, i.e. the dimension of its Zariski closure.
The dimension of a finite union of sets $X_i$ is the maximum of the dimension of $X_i$ over the finitely many indices $i$.

\begin{definition}\label{def: generic property}
    We say that a property $\mathcal{Q}$ is \emph{generic}
    if there is a proper algebraic subset $A\subset \mathbb{R}^{nd}$ such that 
    either the property $\mathcal{Q}$ holds for all $P \in \PC - A$ or the property $\mathcal{Q}$ does not hold for all $P \in \PC - A$. If we are in the first situation we say $\mathcal{Q}$ is \emph{generically true}, or that, \emph{for generic point clouds $P$}, $\mathcal{Q}$ holds.
\end{definition}

We remark that proper algebraic subsets of $\mathbb{R}^n$ in particular are nowhere dense and have measure zero.

To show that a property $\mathcal{Q}$ holds for all generic point clouds it suffices to find a \emph{semialgebraic} subset $A\subseteq \PC$ of dimension less than $nd$ such that $\mathcal{Q}$ holds for all $P \in \PC - A$. This is true because the Zariski closure of $A$ is algebraic, contains $A$, and has dimension less than $nd$, and therefore is a proper algebraic subset of $\mathbb{R}^{nd}$. We will use this fact repeatedly throughout the paper.

\subsection{Enclosing Spheres and Circumspheres}
\label{sec:circumsphere}

Here we recall well known facts about enclosing spheres and circumspheres of collections of points. These facts will help us establish a rigidity theory related to \v{C}ech filtrations in Sections \ref{sec:circrigid}-\ref{sec:cechrigid} and, setting aside Lemma \ref{lem:semialgenclosing}, will not be used elsewhere. 

For convenience, we will always consider a sphere of radius $0$ centered at $c\in \mathbb{R}^d$ to be the set $\{c\}$.

\begin{definition}
    Let $P= (p_1,\ldots,p_n) \in \PC$.
    Let $S$ be a sphere of dimension $d-1$, with any radius and center. If $S$ intersects every point in $P$, then $S$ is called a \emph{circumsphere of $P$}. If $S$ is a circumsphere of $P$ and no circumsphere of $P$ exists with a smaller radius, then $S$ is called the \emph{minimal circumsphere of $P$}. The radius of the smallest circumsphere of $P$ is called the \emph{circumradius of $P$}.

    A sphere whose associated disk contains $P$ is called an \emph{enclosing sphere of $P$}. The smallest sphere that encloses $P$ is called the \emph{minimal enclosing sphere of $P$}. We refer to the radius of this sphere as the \emph{enclosing radius of $P$}.
\end{definition}

We illustrate these kinds of spheres in Figure \ref{fig:spheres}. 

\begin{figure}[htpb]
    \centering
    \includegraphics[width=0.6\textwidth]{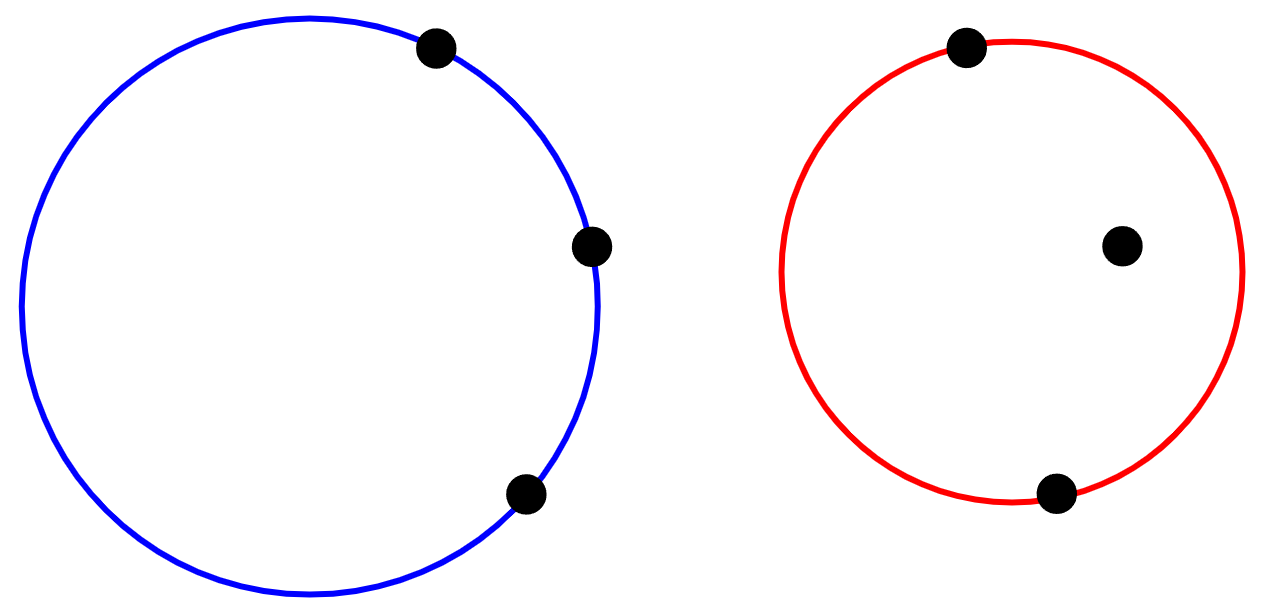}
    \caption{On the left is a circumsphere of a planar point cloud. In fact, since this point cloud has three points, it has at most one circumsphere, so this circumsphere is minimal. On the right is the minimal enclosing sphere of the same point cloud. Note that this enclosing sphere is the minimal circumsphere of a sub-point cloud.}
    \label{fig:spheres}
\end{figure}

The following lemma is a consequence of \cite[Lemma 1]{welzl2005smallest}\footnote{While this result is only stated for the $k=2$. It is pointed out later in the paper that that the same lemma and proof holds for arbitrary $k$, with every instance of the number 3 replaced by $k+1$.}.

\begin{lemma}[\cite{welzl2005smallest}]
    \label{lemma:sphereunique}
    Let $P\in \PC$ be a point cloud with affine span of dimension $l$. If a circumsphere of $P$ exists, then the minimal circumsphere exists and is unique.
    
    Regardless of the choice of $P$, the minimal enclosing sphere of $P$ exists and is unique. 
    Moreover there is a set $\tau = \{i_1,\ldots, i_k\} \subseteq [n]$ where $k\leq l+1$ and the following are equal
    \begin{enumerate}
        \item The minimal enclosing sphere of $P$,
        \item The minimal enclosing sphere of $(p_{i_1},\ldots, p_{i_k})$, and
        \item The minimal circumsphere of $(p_{i_1},\ldots, p_{i_k})$, 
        which must have its center in the affine span of 
            the sub-configuration.
    \end{enumerate}
\end{lemma}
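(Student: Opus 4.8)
The statement has three parts: (i) if a circumsphere of $P$ exists at all, the minimal circumsphere exists and is unique; (ii) the minimal enclosing sphere always exists and is unique; and (iii) a dimension-count/carrier-set statement identifying the minimal enclosing sphere with a minimal circumsphere of a small sub-configuration whose center lies in the affine span of that sub-configuration. My plan is to reduce (i) and (ii) to the cited convexity lemma of Welzl et al.\ \cite{welzl2005smallest}, and then derive (iii) by a separate induction on the affine dimension $l$, using the key geometric observation that a minimal enclosing sphere must have the points it ``touches'' surrounding its center in a convex-hull sense.

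\textbf{Parts (i) and (ii): existence and uniqueness.} First I would recall the standard parametrization: a sphere of radius $\rho$ with center $c$ is determined by $(c,\rho)\in\RR^d\times\RR_{\geq 0}$, and "$S$ is an enclosing sphere of $P$'' is the condition $\|p_i-c\|^2\leq\rho^2$ for all $i$, which is convex in $(c,\rho^2)$. The set of feasible $(c,\rho)$ is closed, and minimizing $\rho$ is minimizing a convex function over a convex set; the cited \cite[Lemma 1]{welzl2005smallest} (with $3\rightsquigarrow k+1$ as in the footnote) gives strict convexity of the relevant optimization, hence a unique minimizer, and existence follows because the infimum of $\rho$ over enclosing spheres is attained (a minimizing sequence of centers stays in a bounded region since $P$ is finite, so one extracts a convergent subsequence). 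This yields (ii) directly. For (i), the same argument applies verbatim to circumspheres once one knows the feasible set is nonempty: "$S$ is a circumsphere'' is $\|p_i-c\|^2=\rho^2$ for all $i$; among all such $(c,\rho)$, minimizing $\rho$ is again a convex problem (the constraint locus, intersected with the region where $\rho$ is bounded, is a convex set after the substitution $u=\rho^2$, since $\|p_i-c\|^2 = \rho^2$ for all $i$ forces $c$ into an affine subspace — the radical axis intersection — on which $\|p_i-c\|^2$ is a convex function of $c$), and \cite{welzl2005smallest} gives uniqueness.

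\textbf{Part (iii): the carrier set.} This is the part I expect to be the main obstacle, since it is where the real combinatorial geometry happens. I would argue by induction on $l=\dim\aff(P)$. Let $S^*=(c^*,\rho^*)$ be the minimal enclosing sphere, and let $\tau\subseteq[n]$ index the points with $\|p_i-c^*\|=\rho^*$ (the "active'' points). The first claim is $c^*\in\operatorname{conv}\{p_i:i\in\tau\}$: otherwise a separating hyperplane argument lets one move $c^*$ slightly toward the active points while keeping all inactive points enclosed, strictly decreasing $\rho$, contradicting minimality — this is the classical characterization of the minimal enclosing ball and is essentially the content of \cite{welzl2005smallest}. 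The second claim is that $S^*$ is also the \emph{minimal} enclosing sphere of $(p_i)_{i\in\tau}$ (it encloses them with all of them on the boundary, and any smaller enclosing sphere of $\tau$ would, by a perturbation argument again using $c^*\in\operatorname{conv}\tau$, contradict minimality for $P$), and equals the minimal \emph{circumsphere} of $\tau$ since every point of $\tau$ lies exactly on it. Finally, to get $|\tau|\leq l+1$ and the center-in-affine-span conclusion, I would invoke Carathéodory: since $c^*\in\operatorname{conv}\{p_i:i\in\tau\}$ and these points lie in an affine space of dimension $\leq l$, $c^*$ is a convex combination of some sub-collection $(p_{i_1},\dots,p_{i_k})$ with $k\leq l+1$; one checks the common circumsphere of this sub-collection still has center $c^*$ and radius $\rho^*$ (each $p_{i_j}$ is at distance $\rho^*$ from $c^*$, and $c^*$ being a convex combination of them forces $c^*\in\aff\{p_{i_1},\dots,p_{i_k}\}$), and that this sphere is the minimal circumsphere of the sub-collection (a smaller one would, pulling back, shrink the enclosing sphere of all of $P$). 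The delicate point to get right is the passage from "enclosing'' to "circumscribing'' on the sub-collection and the simultaneous control of $k$; I would handle it by phrasing the whole argument around the optimality conditions (KKT-type) of the convex minimization, where the active set $\tau$ and the convex-combination coefficients for $c^*$ appear as primal/dual data, making $|\tau|\leq l+1$ after Carathéodory-reduction immediate.
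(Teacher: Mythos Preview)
The paper does not prove this lemma at all: it is stated as a direct consequence of \cite[Lemma~1]{welzl2005smallest} (with the footnote that the $k=2$ case there extends verbatim to arbitrary $k$), and no argument is given in the text. So there is no ``paper's own proof'' to compare against; your proposal is a reconstruction of the underlying geometry that the paper simply imports.

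That said, your reconstruction is essentially the standard one and is sound in outline. Two small points are worth tightening. First, in part~(i) your claim that the circumsphere constraint set is convex in $(c,u)$ with $u=\rho^2$ is not literally true (it is the graph of a strictly convex function over an affine subspace); the correct and cleaner argument is simply that the centers of circumspheres form an affine flat, and minimizing $\|p_1-c\|^2$ over an affine flat has a unique minimizer by orthogonal projection. Second, in part~(iii) your justification that $S^*$ is the minimal enclosing sphere of the Carath\'eodory sub-collection $\tau'$ (``a smaller one would, pulling back, shrink the enclosing sphere of all of $P$'') is not the right reason: a smaller enclosing sphere of $\tau'$ need not enclose the rest of $P$. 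What you actually need---and what you have already set up---is the \emph{if} direction of the characterization: whenever the center lies in the convex hull of boundary points, the sphere is already minimal for those points. That follows from the same parallelogram-law computation you used for uniqueness in (ii), and then the ``minimal circumsphere'' and ``center in the affine span'' claims drop out because the Carath\'eodory sub-collection can be taken affinely independent.
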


We will also need that the minimal enclosing radius function is reasonably well behaved.

\begin{lemma}
    \label{lem:semialgenclosing}
    The map $\rho_\sigma:\PC \to \mathbb{R}$, sending $P$ to the minimal enclosing radius of $\{p_i: i\in\sigma\}$ is semialgebraic.
\end{lemma}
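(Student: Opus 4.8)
The plan is to exhibit the graph of $\rho_\sigma$ as a semialgebraic set by writing down a first-order formula, over the reals, that defines it, and then invoke quantifier elimination (Tarski--Seidenberg). Concretely, I would work with the graph
\[
    \Gamma_\sigma = \{(P, t) \in \PC \times \mathbb{R} : t = \rho_\sigma(P)\},
\]
and produce a formula for $\Gamma_\sigma$ in which all variables range over real numbers and all atomic predicates are polynomial (in)equalities in the coordinates of $P$, in $t$, and in some auxiliary variables that get existentially quantified away.

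First I would set up the predicate ``$B_r(c)$ encloses $\{p_i : i \in \sigma\}$'' as a polynomial condition: introducing an auxiliary center variable $c = (c_1,\dots,c_d)\in\mathbb{R}^d$ and a radius variable $r\in\mathbb{R}$, this is
\[
    \mathrm{Encl}(P, c, r) \;:\equiv\; \bigwedge_{i\in\sigma}\Big( \sum_{\ell=1}^d (p_{i,\ell} - c_\ell)^2 \le r^2 \Big),
\]
a conjunction of polynomial inequalities in $(P, c, r)$. The minimal enclosing radius is then characterized by the sentence saying ``$t$ is the radius of some enclosing ball, and no strictly smaller radius works'':
\[
    \Gamma_\sigma = \Big\{(P,t) : t \ge 0 \ \wedge\ \big(\exists c\, \mathrm{Encl}(P,c,t)\big) \ \wedge\ \big(\forall r\, \forall c\ \big(\mathrm{Encl}(P,c,r) \Rightarrow r \ge t\big)\big)\Big\}.
\]
(The existence of a minimal enclosing ball, and hence the well-definedness of $\rho_\sigma$, is guaranteed by Lemma \ref{lemma:sphereunique}, so this formula really does define the graph of a function.) This is a first-order formula over the ordered field of reals whose atoms are polynomial relations in the real variables $P$, $t$, $c$, $r$; by the Tarski--Seidenberg theorem (equivalently, the three closure properties of semialgebraic sets listed in Section \ref{sec:algebra}, applied to handle the existential quantifier as a projection and the complement/implication via set complements), the set $\Gamma_\sigma \subseteq \mathbb{R}^{nd+1}$ it defines is semialgebraic. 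Finally, $\Gamma_\sigma \cap (\PC \times \mathbb{R})$ is semialgebraic since $\PC$ is semialgebraic, and this is precisely the graph of $\rho_\sigma$ as a map out of $\PC$; hence $\rho_\sigma$ is semialgebraic by definition.

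I do not expect a serious obstacle here: the only subtlety is bookkeeping, namely making sure the universally quantified clause (``no smaller radius works'') is handled correctly, which one does by rewriting $\forall$ as $\neg\exists\neg$ and using closure of semialgebraic sets under complements and projections, exactly as in the quantifier-elimination example worked out in Section \ref{sec:algebra}. One could alternatively avoid the universal quantifier entirely by appealing to Lemma \ref{lemma:sphereunique}: the minimal enclosing sphere of $\{p_i : i\in\sigma\}$ equals the minimal circumsphere of some sub-configuration indexed by $\tau\subseteq\sigma$ with $|\tau|\le d+1$ and center in the affine span of that sub-configuration, so $\rho_\sigma(P)$ is the minimum over the finitely many such $\tau$ of a semialgebraic function (the circumradius of a small sub-configuration, which is an explicit algebraic function of the points, e.g. via the Cayley--Menger determinant, restricted to the locus where that sub-configuration actually has a circumsphere). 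A finite minimum of semialgebraic functions is semialgebraic, giving the result; but the quantifier-elimination route above is cleanest and most robust, so that is the one I would write up.
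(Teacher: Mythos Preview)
Your proposal is correct and essentially identical to the paper's proof: both write the graph of $\rho_\sigma$ as the set of $(P,t)$ such that some ball of radius $t$ encloses $\{p_i:i\in\sigma\}$ and no smaller radius works, expressed as a first-order formula with polynomial atoms, and then invoke quantifier elimination. The only cosmetic difference is that the paper phrases the minimality clause as $\neg\exists r'<t$ with an enclosing ball, whereas you phrase it as $\forall r\,\forall c\,(\mathrm{Encl}\Rightarrow r\ge t)$; you already note these are equivalent.
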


\begin{proof}
    Let $\sigma = \{i_1,\ldots,i_k\}$. The graph of $\rho_\sigma$ is the set
    \begin{align*}
    \label{eqn:enclosingmindef}
       \bigg\{ (P,r)\in \PC \times \mathbb{R} : \Big(\exists x: &\big(d(x,p_{i_1}) \leq r \wedge \ldots \wedge d(x,p_{i_k}) \leq r\big)\Big)\\
        &\wedge \Big(\neg\exists r': \exists x: \big(r' < r \wedge d(x,p_{i_1}) \leq r' \wedge \ldots \wedge d(x,p_{i_k}) \leq r'\big)\Big)\bigg\}.
    \end{align*}
    In the above formula, the first line says that $r$ is large enough furnish an enclosing sphere of $\sigma(P)$, and the second line says that moreover, no smaller $r'<r$ has this property, so that $r$ is the minimal enclosing radius of $\sigma(P)$. It follows by quantifier elimination that the graph of $\rho_\sigma$, and hence $\rho_\sigma$ itself, is semialgebraic.
\end{proof}

We note that the above result is already known in the TDA community, see for example \cite{carriere2021optimizing}.

To have a formulaic description of the minimal enclosing radius function we will need two different kinds of matrices.

\begin{definition}
Let $P = (p_1,\ldots,p_n)\in \PC$.  The  \emph{Euclidean distance matrix of $P$}
is the $n\times n$ matrix $\Lambda_P$ that has entries 
\[
    (\Lambda_P)_{ii} = 0 \qquad \text{and}
    \qquad (\Lambda_P)_{ij} = d(p_i,p_j)^2\quad (\text{when }i\neq j)
\]
The \emph{Cayley-Menger matrix of $P$} is the $(n+1) \times (n+1)$ matrix 
$\Delta_P$
\begin{equation*}
\Delta_P =
\left[ 
\begin{array}{cc} 
  0 & \theta^{T} \\ 
  \theta & \Lambda_P
\end{array} 
\right],
\end{equation*}
where $\theta$ is a column vector of $n$ ones.

For $\sigma = \{i_1,\ldots, i_k\}\subseteq [n]$ we denote by 
$\sigma(P)$ the point cloud $\sigma(P) = (p_{i_1}, \ldots, p_{i_k})$.
\end{definition}

The following result from \cite[Theorem 2.1.3]{fiedler} combined with Lemma \ref{lemma:sphereunique} allows us to compute the radii of enclosing spheres.

\begin{proposition}
    \label{prop:circumradeqn}
    Let $P \in \PC$ and $\tau \subseteq [n]$ be such that $\tau(P)$ is affinely independent. Then the circumradius of $\tau(P)$ is defined and its square is equal to
    \begin{equation}
    \label{eqn:encrad}
        -\frac{\det\Lambda_{\tau(P)}}{2\det\Delta_{\tau(P)}}.
    \end{equation}
\end{proposition}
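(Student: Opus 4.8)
The plan is to express the circumsphere condition as a linear system in the coordinates of the center together with one auxiliary scalar, and then read off the radius by Cramer's rule, recognizing the resulting determinants as (minors of) the Cayley--Menger matrix. Write $\tau(P) = (q_1,\dots,q_k)$ with $q_j = p_{i_j}$, and suppose this sub-configuration is affinely independent, so $k \le d+1$. A point $c \in \RR^d$ is the center of a circumsphere of radius $\rho$ if and only if $\|c - q_j\|^2 = \rho^2$ for all $j$. Expanding, $\|c\|^2 - 2\langle c, q_j\rangle + \|q_j\|^2 = \rho^2$; introducing the scalar unknown $u := \|c\|^2 - \rho^2$, this becomes the \emph{linear} system $u - 2\langle c, q_j\rangle = -\|q_j\|^2$ for $j = 1,\dots,k$, in the $d+1$ unknowns $(u, c_1,\dots,c_d)$. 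Affine independence of $q_1,\dots,q_k$ guarantees that this system, restricted to the affine hull of the $q_j$, has a unique solution (Lemma \ref{lemma:sphereunique} already gives uniqueness of the minimal circumsphere; for affinely independent points the circumsphere itself is unique once we insist the center lies in the affine span, which is the classical circumcenter).

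First I would massage the determinant formula $-\det\Lambda_{\tau(P)} / (2\det\Delta_{\tau(P)})$ into a ratio of determinants that matches the linear algebra above. The standard trick: the squared distance $\|q_a - q_b\|^2 = \|q_a\|^2 - 2\langle q_a, q_b\rangle + \|q_b\|^2$, so the Euclidean distance matrix $\Lambda_{\tau(P)}$ factors (up to rank-one corrections built from the vector of squared norms and the all-ones vector) through the Gram matrix of the $q_j$. Concretely, one performs row and column operations on the bordered matrix $\Delta_{\tau(P)}$ — subtracting multiples of the "border row/column" of ones — to bring it into a block form whose Schur complement is exactly (twice) the coefficient matrix of the linear system for the circumcenter, while the corresponding manipulation of the numerator $\det\Lambda_{\tau(P)}$ produces the determinant of that same matrix with one column replaced by the right-hand side vector $(-\|q_j\|^2)_j$ (shifted suitably, since the formula is translation-invariant one may first translate so that $q_k = 0$, which kills the constants and simplifies bookkeeping). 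Cramer's rule then identifies $u = \|c\|^2 - \rho^2$, and combined with the equation $\|c - q_k\|^2 = \rho^2$ one solves for $\rho^2$ and checks it equals \eqref{eqn:encrad}.

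Alternatively — and this is the cleaner route I would actually write up — I would simply cite that \eqref{eqn:encrad} is precisely the content of \cite[Theorem 2.1.3]{fiedler}: Fiedler's formula for the circumradius of a simplex in terms of its Cayley--Menger determinant. In that case the "proof" reduces to (i) invoking Lemma \ref{lemma:sphereunique} to note that affine independence of $\tau(P)$ guarantees the circumsphere exists and the circumradius is well defined (the minimal circumsphere coincides with the unique circumsphere centered in the affine span, since any circumsphere's center projects orthogonally to one in the span without increasing the radius, and affine independence forces that projected sphere to pass through all $k$ points only for the circumcenter), and (ii) observing that Fiedler states the identity for a nondegenerate simplex, which is exactly the affinely independent case, noting that $\det\Delta_{\tau(P)} \neq 0$ under affine independence so the right-hand side is defined. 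The main obstacle is therefore not the algebra but the bookkeeping of sign conventions and of which sub-matrix of $\Delta_P$ the symbol $\Delta_{\tau(P)}$ denotes (the Cayley--Menger matrix of the sub-configuration, not a submatrix of $\Delta_P$ per se — though these agree), and ensuring the nondegeneracy hypothesis of Fiedler's theorem is literally our affine-independence hypothesis; I would spend a sentence reconciling conventions and then defer to \cite{fiedler} for the determinant identity itself.
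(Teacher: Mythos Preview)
Your proposal is correct, and your ``cleaner route'' is exactly what the paper does: the proposition is stated without proof as a direct citation of \cite[Theorem 2.1.3]{fiedler}, with Lemma \ref{lemma:sphereunique} supplying well-definedness of the circumradius. Your first sketch via Cramer's rule is a fine outline of how one would actually prove Fiedler's identity, but the paper itself does not go into this and simply defers to the reference.
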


\subsection{Rigidity Theory}
\label{sec:rigidity}
We now introduce some background on the rigidity theory of frameworks.  For a 
general reference, see, e.g., \cite{connelly-book}.  The basic objects of study 
are frameworks, which, informally are a placement of a graph into a $d$-dimensional 
Euclidean space.
\begin{definition}\label{def: framework}
Let $d\in \NN$. A \emph{$d$-dimensional framework $(G,P)$} is a pair $(G,P)$ where 
$G = (V,E)$ is a simple, undirected graph with vertex set $V = [n]$ and $m$ edges and 
$P = (p_1, \ldots, p_n)$ is a configuration of $n$ points in $\RR^d$.  We treat $G$ 
as an ordered graph, using the ordering on $[n]$ for the vertices and any bijection 
$E\to [m]$ to fix an ordering of the edges.

The squared edge-length measurement map 
\[
f_G : \left((\RR^d)^n\right) \to \RR^m \qquad P \mapsto \left(\|p_i - p_j\|^2\right)_{\{i,j\}\in E}
\]
maps a framework to its vector of edge lengths.  The correspondence between the coordinates 
of $\RR^m$ is from the ordering of the edges.
\end{definition}

In this paper we will be interested in the following notions of equivalence between point clouds.

\begin{definition}
Two point clouds $P = (p_1,\ldots,p_n),Q = (q_1,\ldots,q_n) \in \PC$ are \emph{congruent} if $d(p_i,p_j) = d(q_i,q_j)$ for all $1 \leq i,j\leq n$. The point clouds $P$ and $Q$ are \emph{isometric} if $P$ and $Q$ are isometric as finite metric spaces $\mathbb{R}^d$. Equivalently, $P$ and $Q$ are isometric if there is a permutation $f:[n] \to [n]$ such that $d(p_i,p_j) = d(q_{f(i)},q_{f(j)})$ for each $i,j\in [n]$. The point clouds $P$ and $Q$ are \emph{weakly homometric} if the multiset of pairwise distances $d(p_i,p_j)$ in $P$ is equal to the multiset of pairwise distances of $Q$.
\end{definition}

Note that $P = (p_1,\ldots,p_n)$ and $Q = (q_1,\ldots,q_n)$ are congruent if and only if the map $\phi$ which sends each $p_i$ to $q_i$ is an isometry. As such congruent point clouds are necessarily isometric, but the converse is not true. It follows from the classical work of Young and Householder that if $P$ and $Q$ are congruent, there is an isomorphism $\phi$ of $\mathbb{R}^d$ sending $p_i$ to $q_i$ \cite{young1938discussion}.

If $P = (p_1,\ldots,p_n)$ and $Q = (q_1,\ldots,q_n)$ are merely isometric, then there is a bijection $f:[n] \to [n]$ such that $d(p_i,p_j) = d(q_{f(i)},q_{f(j)})$. Hence $P$ is congruent to $\widetilde{Q} = (q_{f(1)},\ldots,q_{f(n)})$. So by the previous paragraph there is an isometry $\phi$ of $\mathbb{R}^d$ sending $p_i$ to $q_{f(i)}$. We thus observe that isometries of point clouds lift to isometries of $\mathbb{R}^d$.

Isomorphic point clouds are weakly homometric, but the converse is not true, as shown by the two non-isomorphic point clouds in Figure \ref{fig:genericneeded}. The notion of weakly homometric point clouds is not as important to this paper as that of isometry and congruence. Our reason for defining this notion here is to emphasize that, a point cloud is not always recovered up to isometry from its pairwise distances alone.

\begin{figure}[htbp]
    \centering
    \includegraphics[width=0.8\textwidth]{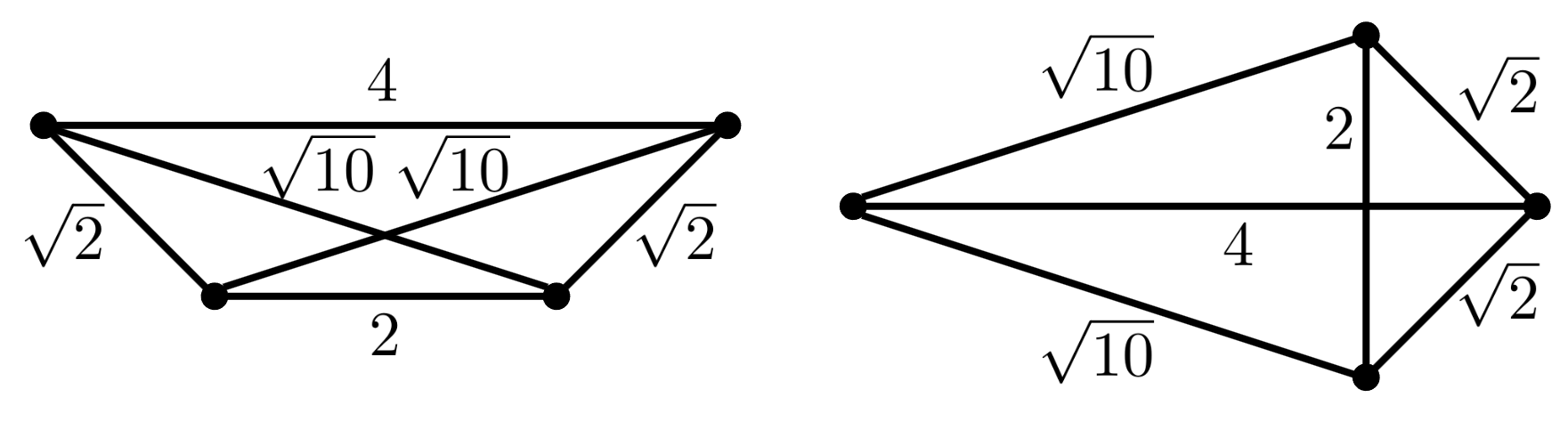}
    \caption{Two weakly homometric point clouds that are not isometric. This particular example comes from \cite[Figure 4]{BK}.}
    \label{fig:genericneeded}
\end{figure}

Rigidity theory deals in questions about the set of frameworks that have the same 
edge lengths as some fixed $(G,P)$.  Intuitively, we regard 
$(G,P)$ as a structure made of fixed length bars connected at their endpoints 
by freely rotating (universal) joints and study its allowed motions.
\begin{definition}\label{def: rigidity}
Let $(G,P)$ and $(G,Q)$ be $d$-dimensional frameworks.  We say that $(G,P)$ and 
$(G,Q)$ are \emph{equivalent} if $f_G(P) = f_G(Q)$, i.e., they have the 
same edge lengths and that $P$ and $Q$ are \emph{congruent} if 
\[
    \|p_i - p_j\|^2 = \|q_i - q_j\|^2 \qquad \text{for all $\{i,j\}\in \binom{[n]}{2}$}
\]
A framework $(G,P)$ is \emph{(locally) rigid} if there is a neighborhood $U\ni P$ 
such that if $Q\in U$ and $(G,P)$ is equivalent to $(G,Q)$, then $P$ is 
congruent to $Q$.  A framework $(G,P)$ is \emph{globally rigid} if \emph{any} framework 
that is equivalent to $(G,P)$ is congruent to it.  A framework  that is not 
rigid is called \emph{flexible}.
\end{definition}

In Figure \ref{fig:rigidex1} we illustrate examples of flexible, rigid, and globally rigid frameworks.

\begin{figure}[htpb]
    \centering
    \includegraphics[width=0.8\textwidth]{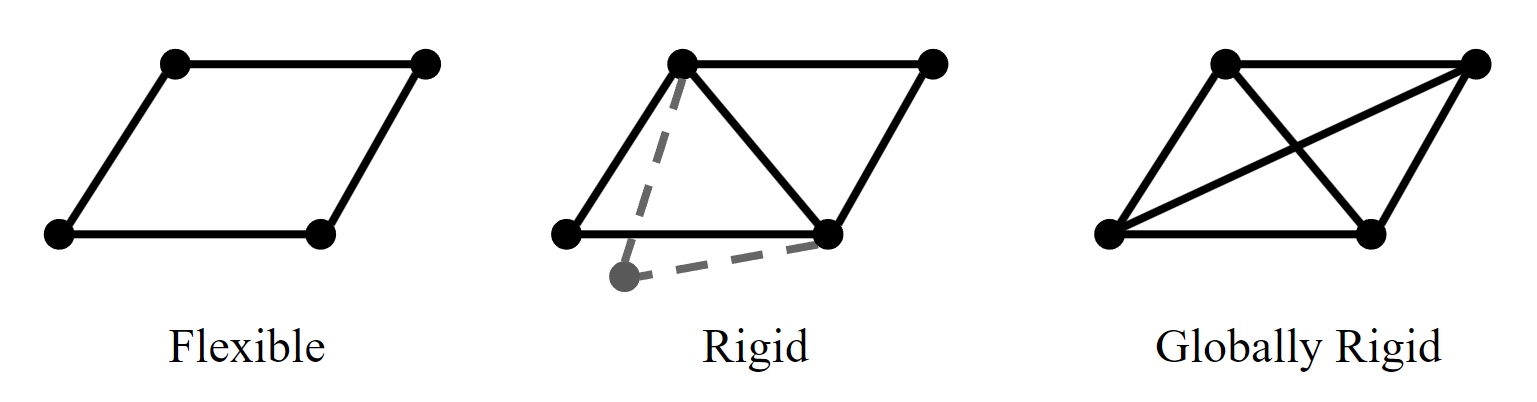}
    \caption{Examples of frameworks in $\RR^2$.  The framework on the left is flexible, since it 
    can deform in a way that expands one diagonal of the quadrilateral and contracts the other while 
    holding the edge lengths fixed.  The framework in the middle is rigid.  If we pin down a triangle to 
    factor out ambient isometries, there are two positions for the remaining unpinned vertex that keep 
    the remaining edge lengths fixed, so any equivalent but non-congruent frameworks must be at some 
    fixed distance from each other.  The framework on the right is complete, so it is globally rigid by definition.  We 
    can also see that adding the missing edge to the framework in the middle resolves the ``flip ambiguity''.}
    \label{fig:rigidex1}
\end{figure}

Frameworks with the same graph can have different rigidity properties.  However, 
for each fixed graph, there is a generic behavior.

\begin{theorem}[{\cite{asimow1978rigidity,GHT}}]\label{thm: generic rigidity}
Rigidity and global rigidity are generic properties.  That is, for each 
fixed dimension $d\in \NN$ and $n\in \NN$, and every graph $G$ on $n$ 
vertices, there is a Zariski open subset $U$ of $d$-dimensional configurations 
so that either every $(G,P)$ with $P$ in $U$ is (globally) rigid or every 
$(G,P)$ with $P\in U$ is not (globally) rigid.
\end{theorem}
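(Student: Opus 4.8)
The plan is to reduce each assertion to the following standard principle: a property of configurations that is \emph{witnessed} by the non-vanishing of a polynomial (for instance, by a matrix attaining its maximal possible rank) holds on a nonempty Zariski open subset of $(\RR^d)^n$ and fails on its complement; since $(\RR^d)^n\cong\RR^{nd}$ is irreducible, finitely many such open sets can be intersected and remain nonempty. The work is to identify, in each case, the right polynomial witness.

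\textbf{Rigidity.} Fix $d$, $n$, and a graph $G$ on $[n]$ with $m$ edges. The first step is the Asimow--Roth criterion \cite{asimow1978rigidity}, which I would establish via the implicit function theorem: at a configuration $P$ at which $\operatorname{rank} df_G$ is locally constant (a \emph{regular point}), the fiber $f_G^{-1}(f_G(P))$ is, near $P$, a smooth manifold of dimension $nd-\operatorname{rank} df_G(P)$; it always contains the orbit $O_P$ of $P$ under $\mathrm{Isom}(\RR^d)$, which is a smooth submanifold whose dimension $e_P=\binom{d+1}{2}-\binom{d-k}{2}$ is controlled by $k=\dim(\text{affine hull of }P)$ (the stabilizer being a copy of $O(d-k)$); and, since being congruent to $P$ means lying in $O_P$, the framework $(G,P)$ is rigid at such a $P$ precisely when $O_P$ is open in the fiber, i.e. when $\operatorname{rank} df_G(P)=nd-e_P$, and flexible when the rank is strictly smaller. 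The second step is the genericity bookkeeping: $k$ is maximal (equal to $\min(n-1,d)$) on a Zariski open set, so $e_P$ equals a fixed value $e$ there; $\operatorname{rank} df_G$ is lower semicontinuous, so the locus $U$ where it attains its global maximum $r^\ast$ is Zariski open, nonempty, and consists of regular points; and on $U$ one has $r^\ast\leq nd-e$ because the orbit sits inside the fiber. Intersecting $U$ with the maximal-affine-hull locus and applying the criterion of the first step yields the dichotomy: if $r^\ast=nd-e$ every $(G,P)$ with $P$ in this open set is rigid, and if $r^\ast<nd-e$ every such $(G,P)$ is flexible.

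\textbf{Global rigidity.} This is the main obstacle: global rigidity is not detected by the first-order datum $df_G(P)$, so the rank argument above does not apply, and a genuinely deeper input is required. The route I would take is to invoke the Gortler--Healy--Thurston theorem \cite{GHT}, which characterizes generic global rigidity in $\RR^d$: a generic $(G,P)$ is globally rigid if and only if $G$ is a complete graph on at most $d+1$ vertices, or $(G,P)$ admits an equilibrium stress whose stress matrix has rank $n-d-1$. The first alternative is a property of $G$ alone. For the second, I would run the semicontinuity argument once more: the space of equilibrium stresses of $(G,P)$ is the kernel of a matrix polynomial in $P$, the stress matrix is polynomial in the stress and in $P$, and hence the maximal rank of an equilibrium stress matrix, over all configurations, is attained on a nonempty Zariski open set and is a function of $G$ alone. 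Intersecting this open set with the maximal-affine-hull locus and with the open set produced in the rigidity argument, and appealing to the GHT characterization on the common open set, shows that either all of its frameworks are globally rigid or none is.

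\textbf{Edge cases.} For $n\leq d+1$ one argues directly: a generic configuration is affinely independent, its congruence class has the dimension $e$ recorded above, the rigidity argument goes through verbatim, and $K_n$ is generically globally rigid, consistent with the first GHT alternative. The only nontrivial technical points I expect to spend time on are the implicit-function-theorem verification underlying Asimow--Roth (properness of the $\mathrm{Isom}(\RR^d)$-action, so that $O_P$ is an embedded submanifold, and the ``orbit open in the fiber'' step) and a careful statement of the stress-matrix semicontinuity; the GHT theorem itself I would cite rather than reprove.
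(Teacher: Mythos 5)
The paper does not prove this theorem; it is cited from Asimow--Roth \cite{asimow1978rigidity} (local rigidity) and Gortler--Healy--Thurston \cite{GHT} (global rigidity), with the paper noting only that the first follows from standard differential-geometric arguments and the second is a non-trivial result building on Connelly's work. So your task here was really to reconstruct the proofs in those sources, and your sketch does that faithfully.

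For local rigidity your argument is exactly the Asimow--Roth argument: on the locus where $\operatorname{rank} df_G$ is maximal, $f_G^{-1}(f_G(P))$ is a manifold; comparing its dimension $nd-r^\ast$ to the orbit dimension $e=\binom{d+1}{2}-\binom{d-k}{2}$ gives the dichotomy; and both the maximal-rank locus and the maximal-affine-span locus are nonempty Zariski open, so their intersection is as well. The orbit-dimension bookkeeping and the remark that $r^\ast\leq nd-e$ are correct. This is the paper's intended justification.

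For global rigidity, I would flag one issue of emphasis rather than a genuine gap. You propose to ``invoke the GHT characterization of generic global rigidity (max-rank equilibrium stress matrix or complete $K_{\leq d+1}$) and then observe the characterizing condition is Zariski-open and depends only on $G$.'' But the GHT characterization theorem and the genericity statement are not independent: GHT \emph{prove} the stress-matrix characterization and the genericity statement together, and their Theorem 1.14 is literally the statement you are trying to establish. So, as a stand-alone proof of the theorem, citing the characterization to deduce genericity is circular; as a proof-by-citation (which is all the paper itself offers), it is fine, and you are right that, once one grants the characterization, the Zariski-openness of the max-stress-matrix-rank condition (over the regular locus, where the stress space has constant dimension) closes the loop. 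I would simply make the dependence explicit: for global rigidity you are citing the result, not deriving it from something more primitive. Your treatment of the small-$n$ edge cases and of the semicontinuity details is sound.
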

The statement for rigidity, which follows from standard differential geometry arguments, is due to 
Asimow and Roth \cite{asimow1978rigidity}.  That global rigidity is a generic property of $G$ 
is a non-trivial result of Gortler--Healy--Thurston \cite{GHT}, building on earlier work 
of Connelly \cite{connelly}.  In light of Theorem \ref{thm: generic rigidity}, we can define 
graphs to be rigid.
\begin{definition}\label{def: graph rigidity}
A graph $G$ is  generically rigid (GLR) in dimension $d$ if every generic $d$-dimensional framework $(G,P)$ 
is  rigid. A graph $G$ is  generically globally rigid (GGR) in dimension $d$ if every generic $d$-dimensional 
framework $(G,P)$ is  globally rigid.
\end{definition}

In Figure \ref{fig:rigidex2} we show an example of a graph $G$ which is generically rigid, but for certain point clouds $P$ can have $(G,P)$ flexible, or $(G,P)$ globally rigid.

\begin{figure}[htbp]
    \centering
    \includegraphics[width=0.8\textwidth]{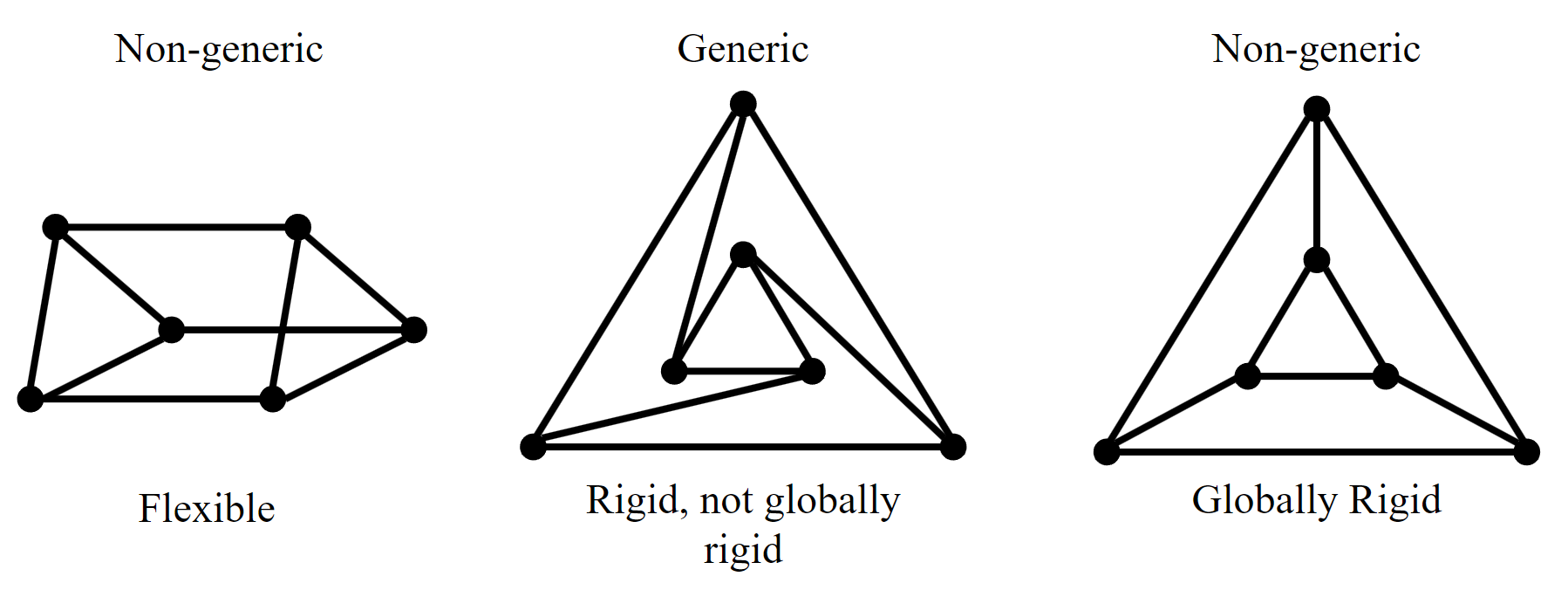}
    \caption{A GLR, but not GGR, graph $G$ which can non-generically be either flexible or globally rigid. On the left we see a non-generic framework $(G,P)$ that is flexible (one triangle can be rotated around the other). In the center, a generic framework that is rigid but not globally rigid (rotate the inner triangle by 120 degrees counterclockwise). On the right a non-generic framework that is globally rigid, see \cite{connelly1996second}.}
    \label{fig:rigidex2}
\end{figure}

In general, there is no simple combinatorial description of GGR or GLR graphs in three or greater dimensions. More precisely, there is no known deterministic polynomial time checkable description of which graphs are GLR or GGR in $d>2$ dimensions. However, whether a graph is generically rigid or generically globally rigid in dimension $d$ can be tested in randomized 
polynomial time, whereas determining whether a particular framework $(G,P)$ is rigid is co-NP hard. Whether or not a graph is GLR or GGR when $d\leq 2$ can be checked in polynomial time, however. A few graph-theoretic definitions are needed.

\begin{definition}
    A graph $G$ is \emph{$k$-connnected} if any graph $G'$ obtained from $G$ by removing $k$ vertices and all edges incident to these vertices is connected. A graph $G$ is \emph{redundantly rigid} if any graph $G'$ obtained from $G$ by removing a single edge is GLR.
\end{definition}

\begin{theorem}[\cite{pollaczek1927gliederung, jackson2005connected}]
\label{thm:dim2rigid}
A graph $G$ is GLR in dimension 2 if and only if $G$ contains a spanning subgraph $G'$ such that
    \begin{enumerate}
        \item $|E(G')| = 2n -3$, and
        \item if $X$ is a subset of $[n]$, the number of edges of $G'$ whose endpoints are both in $X$ is less than or equal to $2|X|-3$.
    \end{enumerate}
    A graph $G$ is GGR in dimension 2 if and only if $G$ is complete or
    \begin{enumerate}
        \item $G$ is 3-connected, and
        \item $G$ is redundantly rigid in dimension 2.
    \end{enumerate}
\end{theorem}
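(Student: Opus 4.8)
The plan is to treat the two characterizations separately, in each case reducing the assertion to a statement about a single generic rank. Throughout write $R(G,P)$ for the $m\times 2n$ rigidity matrix whose row indexed by an edge $\{i,j\}$ vanishes outside the coordinate blocks for $i$ and $j$, where it equals $p_i-p_j$ and $p_j-p_i$ respectively. By the Asimow--Roth input behind Theorem~\ref{thm: generic rigidity}, a framework $(G,P)$ in $\RR^2$ with $n\geq 2$ is rigid if and only if it is infinitesimally rigid, i.e.\ $\operatorname{rank} R(G,P)=2n-3$, and this rank is maximal on a Zariski-open set of configurations; so $G$ is GLR in dimension $2$ exactly when $E(G)$ contains $2n-3$ edges whose rows are generically independent, that is, a basis of the rank-$(2n-3)$ generic rigidity matroid. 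Likewise, by Connelly's sufficient condition \cite{connelly} (and, for the converse, \cite{GHT}), $G$ is GGR in dimension $2$ if and only if a generic framework admits an equilibrium stress matrix of the maximal possible rank $n-3$. Both statements thus become ``the relevant generic rank is full''.

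For the GLR part (Laman's theorem, due to Pollaczek-Geiringer \cite{pollaczek1927gliederung}), necessity is easy: the rows of $R(G,P)$ indexed by edges with both endpoints in a set $X$ with $|X|\geq 2$ span the row space of the rigidity matrix of the induced subframework on $X$, of dimension at most $2|X|-3$, so a generically independent edge set meets $X$ in at most $2|X|-3$ edges, and a maximal independent set has exactly $2n-3$ edges---necessarily spanning, by applying the same bound to a putative omitted vertex---whenever $G$ is rigid. For sufficiency I would argue inductively along a Henneberg construction: every graph with $2n-3$ edges obeying the hereditary bound is built from a single edge by $0$-extensions (add a vertex joined to two existing ones) and $1$-extensions (delete an edge $\{a,b\}$, add a vertex joined to $a$, $b$, and a third vertex $c$); that such a sequence exists is a lemma proved by locating a degree-$2$ or degree-$3$ vertex and reversing the corresponding move while keeping the count. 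I would then check that each operation keeps the generic rank full: for a $0$-extension the two new rows are generically independent of each other and of the old row space by inspection; for a $1$-extension I would specialize, placing the new vertex on the line through $p_a$ and $p_b$ so that the rows for $\{v,a\}$ and $\{v,b\}$ become parallel, relating $\operatorname{rank} R(G',P')$ at this placement to the rank of the pre-extension framework, and then arguing the rank can only be larger at a generic placement---being careful that the placements where this degenerates lie in a proper subvariety. Induction from the base case $K_2$ then gives generic rigidity.

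For the GGR part (Jackson--Jord\'an \cite{jackson2005connected}), necessity is Hendrickson's theorem at $d=2$: if a generic $(G,P)$ with at least $4$ vertices is globally rigid then $G$ is $3$-connected---else reflect one side of a vertex cut of size at most $2$ across the cut's affine span to get an equivalent but non-congruent framework---and redundantly rigid---else a generic flex of $G-e$ sweeps the length of the removed edge $e$ through its target value from two incongruent sides, again contradicting global rigidity. For sufficiency I would assume $G$ is $3$-connected and redundantly rigid in the plane and, using the reduction above, produce for generic $P$ an equilibrium stress matrix of the maximal rank $n-3$. This I would do by induction along an inductive construction of the class of $3$-connected redundantly rigid planar graphs: every such graph arises from $K_4$ by edge additions and $1$-extensions that stay inside the class. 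The base case $K_4$ has four vertices and a one-dimensional space of planar equilibrium stresses, whose stress matrix has rank $1$, the maximum; adding an edge only enlarges the stress space and so preserves a maximal-rank stress; and for a $1$-extension I would show that a maximal-rank stress matrix of the smaller framework lifts to a maximal-rank stress matrix of the extended framework for a generic placement of the new vertex, via Connelly's analysis of how stress matrices transform under vertex splitting together with a genericity argument that the lift is nondegenerate off a proper subvariety. Connelly's sufficient condition then yields global rigidity at generic $P$.

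I expect the main obstacles, in increasing difficulty, to be: in the GLR part, proving the Henneberg decomposition and that the $1$-extension preserves full rigidity rank (a genuine genericity argument, not bare linear algebra); and in the GGR part, both the inductive characterization of $3$-connected redundantly rigid planar graphs---the real combinatorial core---and the propagation of a maximal-rank stress matrix across a $1$-extension. A recurring technical point is that the various Zariski-open ``generic'' loci---for rigidity, for redundant rigidity, for the stress rank, and for each inductive step---must be intersected and shown to stay Zariski-dense, so that one generic configuration simultaneously witnesses everything the argument needs.
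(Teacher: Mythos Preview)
The paper does not prove this theorem at all: it is stated as background from rigidity theory and attributed to \cite{pollaczek1927gliederung,laman1970graphs} for the GLR part and to \cite{jackson2005connected} for the GGR part, with no argument beyond those citations. So there is no ``paper's own proof'' to compare against.

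Your outline is a faithful sketch of the standard proofs in the literature. The GLR half is exactly the Henneberg/Laman argument: necessity via row-rank bounds on induced subframeworks, sufficiency by building a $(2,3)$-tight graph from $K_2$ via $0$- and $1$-extensions and checking each preserves generic infinitesimal rigidity. The GGR half is the Hendrickson necessity (cut/reflect for $3$-connectivity, flex-and-return for redundant rigidity) combined with the Jackson--Jord\'an inductive construction of $3$-connected redundantly rigid graphs from $K_4$, propagating a maximal-rank equilibrium stress via Connelly's criterion. You have correctly identified where the real work lies: the existence of the Henneberg sequence in the GLR case, and, much more substantially, the combinatorial ear-decomposition theorem for $M$-connected (equivalently, $3$-connected redundantly rigid) graphs that is the heart of \cite{jackson2005connected}. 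That last piece is a serious matroid-theoretic result in its own right and is not something one would reproduce in a paper that merely uses the characterization; your honest flagging of it as the ``real combinatorial core'' is appropriate. One small correction: the stress-rank maximum for $K_4$ in the plane is $n-3=1$, which you state correctly, but be aware that the lifting of a maximal-rank stress across a $1$-extension is delicate and in the original proof goes through Connelly--Whiteley's analysis rather than a bare genericity count.
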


The characterization of graphs that are GLR in dimension 2 was established in \cite{pollaczek1927gliederung} and later rediscovered in \cite{laman1970graphs}. 
The result for GGR graphs was proven decades later still in \cite{jackson2005connected}. For GGR graphs, the phrase  ``$G$ is complete'' is only needed in the second part of the above theorem when $G$ has 3 or fewer vertices, as graphs with this many vertices are never redundantly rigid.

\begin{remark}\label{rem: kinds of genericity}
In the rigidity theory literature, theorems about generic frameworks are usually stated in terms 
of $P$ having coordinates that are algebraically independent over $\QQ$.  As discussed
in \cite{gortler2019generic}, the theorems we use here hold with the genericity 
assumption of Definition \ref{def: generic property}.
\end{remark}

In what 
follows, we will need a strengthening of global rigidity to the situation where the know the 
edge lengths of $(G,P)$ but not $G$.  Here is the relevant combinatorial definition.
\begin{definition}
Let $G = ([n],E)$ and $H = ([n],F)$ be ordered graphs with $m$ edges and 
bijections $g : E\to [m]$ and $f : F\to [m]$ giving the edge orderings.  We say 
that $G$ and $H$ are isomorphic as ordered graphs if there is a permutations $\sigma\in \operatorname{Sym}([n])$ 
and $\tau \in \operatorname{Sym}([m])$ so that, so that 
\[
    \{\sigma(i),\sigma(j)\}\in F \quad \Longleftrightarrow \quad \{i,j\}\in E
    \qquad \text{and} \qquad \tau(f(\{i,j\})) = g(\{\sigma(i),\sigma(j)\}) \quad \text{(all $\{i,j\} \in E$)}
\]
In words, the graph isomorphism $\sigma$ reorders the edges of $G$ according to $\tau^{-1}$.
\end{definition}
We will use the following result about ``unlabeled'' generic global rigidity, which says, 
informally, that the unordered multi-set of edge lengths from a generic, globally 
rigid framework determine the graph, up to isomorphism, and the configuration, up 
to congruence.
\begin{theorem}[{\cite{gortler2019generic}}] \label{thm: gugr}
Suppose $n\geq d+2$ and $d\geq 2$. Let $G$ be an ordered GGR graph in dimension $d$ with $n$ vertices and $(G,P)$ a generic 
$d$-dimensional framework. If  $H$ is any other ordered graph with $n$ vertices and $m$ 
edges, and there is a $\tau\in \operatorname{Sym}([m])$ so that 
\[
    f_H(Q) = \tau(f_G(P))
\]
then there is a $\sigma\in \operatorname{Sym([n])}$ that makes $H$ isomorphic to 
$G$ with the same $\tau$ and, under the vertex relabeling $\sigma$, 
$Q$ is congruent to $P$.
\end{theorem}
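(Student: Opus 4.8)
The plan is to pass to the complexified \emph{measurement varieties} of $G$ and $H$, deduce that they coincide up to the coordinate permutation $\tau$, and then recover the combinatorial type of $G$ (hence the required isomorphism) from this variety using stress matrices. Write $M_G\subseteq\mathbb{C}^m$ for the Zariski closure of the image of $f_G:\mathbb{C}^{nd}\to\mathbb{C}^m$. Since $\mathbb{C}^{nd}$ is irreducible and $f_G$ is polynomial, $M_G$ is irreducible; since $f_G$ is invariant under the $\binom{d+1}{2}$-dimensional group of ambient complex isometries, and (using $n\ge d+2$, $d\ge 2$) a generic configuration affinely spans and has finite isometry stabilizer, we get $\dim M_G\le nd-\binom{d+1}{2}$, with equality precisely when $G$ is generically rigid. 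As a GGR graph is rigid, $\dim M_G=nd-\binom{d+1}{2}$ is the maximum possible, and the same discussion applies to $M_H$.

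\textbf{Step 1: the measurement varieties agree.} There are only finitely many ordered graphs $H$ on $n$ vertices with $m$ edges and finitely many $\tau\in\operatorname{Sym}([m])$, so it suffices to exhibit, for each pair $(H,\tau)$, a proper real-algebraic subset of $\mathbb{R}^{nd}$ outside of which the conclusion holds, and take the union. Fix $(H,\tau)$. If $M_G\not\subseteq\tau^{-1}(M_H)$, then $f_G^{-1}(\tau^{-1}(M_H))$ is a Zariski-closed subset of $\mathbb{C}^{nd}$ that is not everything (else $M_G\subseteq\tau^{-1}(M_H)$) and is defined over $\mathbb{R}$; for real $P$ outside it there is no real $Q$ with $f_H(Q)=\tau(f_G(P))$, so the hypothesis is vacuous there. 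Otherwise $M_G\subseteq\tau^{-1}(M_H)$, and since $\tau^{-1}(M_H)$ is irreducible with $\dim\le nd-\binom{d+1}{2}=\dim M_G$, we must have $\tau(M_G)=M_H$; in particular $H$ is rigid.

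\textbf{Step 2: recovering the graph.} This is the heart of the argument and the step I expect to be hardest; it is essentially the content of \cite{gortler2019generic}. At a generic smooth point $\omega=f_G(P)$ of $M_G$, the normal space $N_\omega M_G$ equals the equilibrium-stress space $S(G,P)$ (the left kernel of the rigidity matrix), and likewise $N_{\tau(\omega)}M_H=S(H,Q)$ for any $Q$ with $f_H(Q)=\tau(\omega)$; since $\tau$ is an orthogonal coordinate permutation, $N_{\tau(\omega)}(\tau M_G)=\tau_*N_\omega M_G$, so $\tau_*S(G,P)=S(H,Q)$. By the Connelly--Gortler--Healy--Thurston characterization of generic global rigidity, $S(G,P)$ contains a stress whose stress matrix $\Omega$ has maximal rank $n-d-1$; then $\ker\Omega=\operatorname{colspace}[P\mid\mathbf{1}]$, so $\Omega$ determines $P$ up to an affine map, which the squared edge lengths $\omega_e$ then rigidify to a single congruence class. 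Pushing $\Omega$ forward by $\tau$ yields an equilibrium stress matrix of the same rank for $(H,Q)$, and repeating the recovery shows that $Q$ is, after some vertex relabeling $\sigma\in\operatorname{Sym}([n])$, congruent to $P$. Finally, because a generic $P$ has all $\binom{n}{2}$ squared pairwise distances distinct, $E(G)$ is exactly the set of pairs realizing the values $\{\omega_e\}$ in $P$, and $E(H)$ the set of pairs realizing the same values in $Q$; matching the two recoveries forces $\sigma$ to carry $E(G)$ onto $E(H)$ with the given $\tau$ as the induced edge relabeling, i.e.\ $\sigma$ is an ordered-graph isomorphism $G\to H$. The delicate points, which I expect to be the real obstacle, are ensuring that the maximal-rank stress and its kernel behave as stated at the relevant generic points on both the $G$- and $H$-sides, and upgrading the distance-preserving relabeling $\sigma$ to a genuine graph isomorphism compatible with $\tau$ rather than merely a coincidence of length multisets.

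\textbf{Step 3: conclusion.} Given $\sigma$, let $P^{\sigma}$ denote $P$ with its points relabeled by $\sigma$; by the way $\sigma$ and $\tau$ are matched, $f_H(P^{\sigma})=\tau(f_G(P))=f_H(Q)$, so $(H,P^{\sigma})$ and $(H,Q)$ are equivalent frameworks. Since $H\cong G$ is GGR and $P^{\sigma}$ is generic whenever $P$ is, the framework $(H,P^{\sigma})$ is globally rigid, hence $Q$ is congruent to $P^{\sigma}$; equivalently, under the relabeling $\sigma$ the configuration $Q$ is congruent to $P$, which is exactly the assertion of the theorem. Collecting the finitely many proper algebraic sets produced along the way (the vacuous cases from Step 1, together with the loci where genericity fails for $P$ in Steps 2--3) gives the single proper algebraic subset of $\mathbb{R}^{nd}$ required by the statement.
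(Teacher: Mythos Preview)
The paper does not prove this theorem. It is stated as a citation to \cite{gortler2019generic} and used as a black box (in Corollary~\ref{cor:bk} and in the proof of Theorem~\ref{thm:globrig}). So there is no ``paper's own proof'' to compare your attempt against; the authors explicitly delegate the argument to the Gortler--Theran--Thurston paper.

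That said, your sketch is a fair high-level outline of the strategy in \cite{gortler2019generic}: reduce to equality of measurement varieties via a dimension/irreducibility argument, then identify the graph from the normal space (equilibrium stresses) at a smooth generic point, exploiting the Connelly/GHT rank-$(n-d-1)$ stress matrix. One point to tighten if you ever flesh this out: in Step~2 you speak of ``pushing $\Omega$ forward by $\tau$'', but $\tau$ acts on $\mathbb{R}^m$ (edge coordinates), not on $n\times n$ matrices. What transfers is the stress \emph{vector}; rebuilding a stress \emph{matrix} on the $H$-side requires the graph structure of $H$, and showing the resulting matrix again has rank $n-d-1$ (and that its kernel recovers $Q$ up to affine maps) is precisely where the real work in \cite{gortler2019generic} lies. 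You correctly flag this as the delicate step, but as written it is an assertion rather than an argument.
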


\begin{corollary}[\cite{BK}]
    \label{cor:bk}
    Let $P,Q \in \PC$ and suppose $n\geq d+2$ and $d\geq 2$. If $P$ is generic and $P$ and $Q$ are weakly homometric then $P$ and $Q$ are isometric.
\end{corollary}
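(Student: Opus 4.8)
The plan is to deduce this immediately from Theorem \ref{thm: gugr} applied to the complete graph $K_n$ on $n$ vertices. The first observation is that $K_n$ is generically globally rigid in every dimension: by Definition \ref{def: rigidity}, \emph{any} framework on a complete graph is globally rigid, since an equivalent framework has, by definition of equivalence, the same squared lengths on all pairs $\{i,j\}$, which is exactly the condition for congruence. Hence, whenever $P$ is generic, $(K_n,P)$ is a generic $d$-dimensional framework on a GGR graph (using Remark \ref{rem: kinds of genericity} to align the notion of genericity in Definition \ref{def: generic property} with the one used in the rigidity literature).

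Next I would translate the weak homometry hypothesis into the hypothesis of Theorem \ref{thm: gugr}. Fix any ordering of the $m = \binom{n}{2}$ edges of $K_n$. Then $f_{K_n}(P)\in\RR^m$ is the vector of squared pairwise distances of $P$, and similarly for $Q$. Since squaring is a bijection on $[0,\infty)$, $P$ and $Q$ being weakly homometric is equivalent to saying that $f_{K_n}(P)$ and $f_{K_n}(Q)$ have the same multiset of coordinates, i.e.\ there exists a coordinate permutation $\tau\in\operatorname{Sym}([m])$ with $f_{K_n}(Q) = \tau(f_{K_n}(P))$.

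Now I would invoke Theorem \ref{thm: gugr} with $G = H = K_n$, which are trivially isomorphic as graphs; the standing hypotheses $n\geq d+2$ and $d\geq 2$ are exactly what is assumed. The theorem yields a vertex permutation $\sigma\in\operatorname{Sym}([n])$ such that, after relabeling the vertices of $Q$ by $\sigma$, the resulting configuration is congruent to $P$. Unwinding the definition of congruence, this says $d(p_i,p_j) = d(q_{\sigma(i)},q_{\sigma(j)})$ for all $i,j\in[n]$, which is precisely the definition of $P$ and $Q$ being isometric point clouds, completing the argument.

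I do not expect a genuine obstacle here: the entire substantive content is carried by Theorem \ref{thm: gugr}. The only care required is bookkeeping, namely checking that ``$P$ and $Q$ have the same multiset of pairwise distances'' is equivalent to ``$f_{K_n}(Q) = \tau(f_{K_n}(P))$ for some coordinate permutation $\tau$'', and that the ``congruent after relabeling by $\sigma$'' conclusion of Theorem \ref{thm: gugr} coincides with the paper's definition of isometric point clouds.
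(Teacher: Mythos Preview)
Your proposal is correct and matches the paper's own proof, which is simply ``take $G$ to be an ordered complete graph, which is GGR, and apply Theorem~\ref{thm: gugr}.'' You have just spelled out the bookkeeping (why $K_n$ is GGR, why weak homometry gives the permutation $\tau$, and why congruence-after-relabeling means isometry) that the paper leaves implicit.
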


\begin{proof}
    Take $G$ to be an ordered complete graph, which is GGR, and apply Theorem \ref{thm: gugr}.
\end{proof}

We note that Corollary \ref{cor:bk} was proven by Boutin and Kemper before Theorem \ref{thm: gugr}. The genericity assumption is necessary for Corollary \ref{cor:bk}, as is shown by Figure \ref{fig:genericneeded}. Since Corollary \ref{cor:bk} is a special case of Theorem \ref{thm: gugr} we see the genericity assumption is necessary there as well. We remark here that this result of Boutin and Kemper has been used to motivate studying pairwise distances of point clouds in data science settings, for example in \cite{widdowson2021pointwise}.

\section{Semialgebraic Structure}
\label{sec:semialgebraic}
The goal of this section is to show that given a full barcode $D$, the level sets $(\PHc)^{-1}(D)$ and $(\PHvr)^{-1}(D)$ in $\PC$ both have a semialgebraic structure, closely following arguments from \cite{carriere2021optimizing}. This will establish that the notion of dimension established in Section \ref{sec:dimdef} is well defined for fibers of the persistence map. Moreover, the observations we make along the way will help us determine a generic lower bound on the dimensions of the fibers of these two persistence maps, and will be useful later in establishing connections to rigidity theory.

In greater generality, we want to show that persistence arising from any $\Phi$, an order preserving map parametrized by $\PC$, has semialgebraic level sets. This is more general since it is straightforward to check that $\Phi^{\VR}:\PC \to \mathbb{R}^{|K|}$ is semialgebraic, since each $\Phi^\VR_\sigma$ is semialgebraic. Moreover, the following lemma shows that $\Phi^{\C}$ is also semialgebraic.

\begin{proposition}
The map $\Phi^{\C}$ is semialgebraic.    
\end{proposition}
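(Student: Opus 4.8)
The plan is to show $\Phi^{\C}$ is semialgebraic by exhibiting its graph as a set definable by a first-order formula over the reals with polynomial (in)equalities, and then invoking quantifier elimination (the Tarski--Seidenberg theorem, property 3 of semialgebraic sets) exactly as in the worked examples of Section \ref{sec:algebra} and the proof of Lemma \ref{lem:semialgenclosing}. Concretely, recall $\Phi^{\C}_\sigma(P) = \min\{r : \exists q\in\mathbb{R}^d,\ d(p_i,q)\le r\ \forall i\in\sigma\}$; this is precisely the minimal enclosing radius function $\rho_\sigma$ from Lemma \ref{lem:semialgenclosing}, which is already proven semialgebraic. Since $\Phi^{\C} = (\Phi^{\C}_\sigma)_{\sigma\in K(n)} = (\rho_\sigma)_{\sigma\in K(n)}$ is a tuple of semialgebraic functions on the common semialgebraic domain $\PC$, its graph is the intersection (suitably reindexed inside $\PC\times\mathbb{R}^{|K(n)|}$) of the graphs of the coordinate functions, each of which is semialgebraic; finite intersections of semialgebraic sets are semialgebraic (property 1), so the graph of $\Phi^{\C}$ is semialgebraic, which is the definition of $\Phi^{\C}$ being semialgebraic.

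More explicitly, I would write the graph of $\Phi^{\C}$ as
\begin{equation*}
    \Gamma = \Big\{(P, (r_\sigma)_\sigma)\in \PC\times\mathbb{R}^{|K(n)|} : \bigwedge_{\sigma\in K(n)} \big(r_\sigma = \rho_\sigma(P)\big)\Big\},
\end{equation*}
note that $\Gamma = \bigcap_{\sigma}\pi_\sigma^{-1}(\mathrm{graph}(\rho_\sigma))$ where $\pi_\sigma:\PC\times\mathbb{R}^{|K(n)|}\to\PC\times\mathbb{R}$ is the (semialgebraic, indeed polynomial) coordinate projection onto $\PC$ and the $\sigma$-th copy of $\mathbb{R}$. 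Preimages of semialgebraic sets under polynomial maps are semialgebraic (this follows from the closure properties, or directly from Tarski--Seidenberg), and a finite intersection of semialgebraic sets is semialgebraic, so $\Gamma$ is semialgebraic. Alternatively, and perhaps more self-containedly, one can just repeat the quantifier-elimination argument of Lemma \ref{lem:semialgenclosing} verbatim for each coordinate $\sigma$ and assemble: the condition ``$r_\sigma$ is the minimal enclosing radius of $\sigma(P)$'' is the conjunction of an $\exists$-statement (some center $q$ achieves radius $\le r_\sigma$) and a $\neg\exists$-statement (no $r' < r_\sigma$ admits such a center), all built from the polynomial inequalities $\|p_i - q\|^2 \le r'^2$, so it defines a semialgebraic set by quantifier elimination.

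The only mild subtlety, and the one point worth flagging, is well-definedness: one must know $\Phi^{\C}$ is a genuine function on all of $\PC$, i.e. the minimum in its definition is always attained. This is guaranteed by Lemma \ref{lemma:sphereunique}, which asserts the minimal enclosing sphere of any finite point set exists (and is unique); in particular $\rho_\sigma$ is everywhere finite, so $\Phi^{\C}:\PC\to\mathbb{R}^{|K(n)|}$ is honestly defined and its graph is what we claimed. There is essentially no hard step here — the content was already extracted into Lemma \ref{lem:semialgenclosing} — so the proof is a short reduction: invoke Lemma \ref{lem:semialgenclosing} for each $\sigma$, then combine using the closure of semialgebraic sets under finite intersection and polynomial preimage. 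The main (very minor) obstacle is purely bookkeeping: correctly threading the $|K(n)|$-many coordinate functions through the product space $\PC\times\mathbb{R}^{|K(n)|}$ so that ``tuple of semialgebraic functions is semialgebraic'' is applied cleanly rather than hand-waved.
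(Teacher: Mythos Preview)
Your proposal is correct and takes essentially the same approach as the paper: identify each coordinate $\Phi^{\C}_\sigma$ with the minimal enclosing radius function $\rho_\sigma$, invoke Lemma \ref{lem:semialgenclosing}, and conclude that a coordinate-wise semialgebraic map is semialgebraic. The paper's proof is a two-line version of exactly this; your extra care in spelling out why a tuple of semialgebraic functions has semialgebraic graph (via finite intersections of preimages under coordinate projections) is fine but more detail than the paper bothers with.
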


\begin{proof}
    The coordinate maps of $\Phi_\sigma^{\C}$ are the maps $\rho_\sigma$, which assign to $P$ the enclosing radius of $\sigma(P)$. These maps are semialgebraic by Lemma \ref{lem:semialgenclosing}. Thus $\Phi^{\C}$ is semialgebraic, since it is coordinate-wise semialgebraic.
\end{proof}

The main result of this section is the following:

\begin{theorem}
    \label{thm:fibersemialg}
    Let $K$ be a finite simplicial complex. Suppose that $\Phi:\PC\to\mathbb{R}^{|K|}$ is a semialgebraic order preserving map parametrized by $\PC$. Then given a barcode $B$, the set $(\PH_i\circ\Phi)^{-1}(B)$ is semialgebraic for all $i \geq 0$. It follows that the set $(\PH_i\circ\Phi)^{-1}(D)$ is semialgebraic for any full barcode $D$.
\end{theorem}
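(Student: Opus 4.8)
The plan is to fix the barcode $B$ and the homological degree $i$, and to express the condition ``$\PH_i(\Phi(P)) = B$'' as a first-order formula in the coordinates of $P$, using polynomial equalities and inequalities, so that quantifier elimination yields a semialgebraic set. The key observation is that for an order preserving map $f \in \mathbb{R}^{|K|}$, the persistent homology $\PH_i(f)$ is computed by the standard persistence algorithm from the boundary matrices of $K$ together with the total preorder on simplices induced by $f$: the barcode is determined entirely by (a) which pairs of simplices are ``paired'' by the reduction, which is a purely combinatorial function of the preorder $\preceq_f$ on the simplices of $K$ (where $\sigma \preceq_f \tau$ iff $f_\sigma \le f_\tau$), and (b) the actual values $f_\sigma$ at the birth/death simplices, which give the interval endpoints. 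Since $K$ is finite, there are only finitely many total preorders on the simplices of $K$, so I would partition $\PC$ into finitely many pieces according to the preorder $\preceq_{\Phi(P)}$.

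First I would make precise that for each total preorder $\preceq$ on the simplices of $K$ that is compatible with the face relation (i.e.\ $\tau \subseteq \sigma \Rightarrow \tau \preceq \sigma$), the set $U_\preceq = \{P \in \PC : \preceq_{\Phi(P)} = \preceq\}$ is semialgebraic: it is defined by the conjunction of conditions $\Phi_\sigma(P) \le \Phi_\tau(P)$ and $\Phi_\sigma(P) < \Phi_\tau(P)$ (or $=$), one for each pair of simplices, according to $\preceq$; each such condition is semialgebraic because $\Phi$ is semialgebraic (its graph is semialgebraic, and comparing two coordinate functions of a semialgebraic map is a first-order condition, hence semialgebraic by Tarski--Seidenberg), and finite intersections of semialgebraic sets are semialgebraic. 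These $U_\preceq$ partition $\PC$. Next, on a fixed $U_\preceq$, run the persistence algorithm abstractly: it produces a fixed finite list of ``paired'' simplex pairs $(\sigma_j^{\mathrm{birth}}, \sigma_j^{\mathrm{death}})$ and a fixed list of ``essential'' birth simplices $\sigma_j^{\mathrm{birth}}$ contributing infinite intervals, all depending only on $\preceq$ (and on $i$). For $P \in U_\preceq$, the degree-$i$ barcode is then the multiset of intervals $[\,\Phi_{\sigma_j^{\mathrm{birth}}}(P),\, \Phi_{\sigma_j^{\mathrm{death}}}(P)\,)$ over the $i$-dimensional pairs, together with $[\,\Phi_{\sigma_j^{\mathrm{birth}}}(P),\infty)$ over the essential ones, with the convention that an interval with equal endpoints is empty and discarded (this happens on the boundary strata where the preorder has ties, but $\Phi$ may still be injective on the relevant coordinates — in any case the emptiness test $\Phi_{\sigma^{\mathrm{birth}}}(P) = \Phi_{\sigma^{\mathrm{death}}}(P)$ is semialgebraic). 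Then $(\PH_i \circ \Phi)^{-1}(B) \cap U_\preceq$ is cut out by asserting that this explicit multiset of intervals equals $B$: for each distinct endpoint value $c$ appearing in $B$ with its multiplicity data, one writes polynomial equations $\Phi_{\sigma_j}(P) = c$ and inequalities $\Phi_{\sigma_j}(P) \ne c$ to pin down exactly which of the finitely many candidate intervals realize which interval of $B$, summing over the finitely many ways of matching the candidate index set to $B$. Each such matching condition is a first-order formula with polynomial (in)equalities in $P$ and the fixed constants of $B$, hence semialgebraic; taking the finite union over matchings gives that $(\PH_i \circ \Phi)^{-1}(B) \cap U_\preceq$ is semialgebraic.

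Finally, I would take the finite union over all face-compatible total preorders $\preceq$ to conclude $(\PH_i \circ \Phi)^{-1}(B) = \bigcup_\preceq \big( (\PH_i \circ \Phi)^{-1}(B) \cap U_\preceq \big)$ is semialgebraic. For the last sentence of the theorem, a full barcode $D = \{D_i\}_{i \ge 0}$ only involves finitely many nonempty $D_i$ (indeed $D_i$ is empty for $i \ge \dim K$, and more to the point for the filtrations in question $D_i$ is empty for $i$ large), so $(\PH \circ \Phi)^{-1}(D) = \bigcap_{i} (\PH_i \circ \Phi)^{-1}(D_i)$ is a finite intersection of semialgebraic sets, hence semialgebraic. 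The main obstacle I anticipate is purely bookkeeping rather than conceptual: carefully showing that the combinatorial output of the persistence algorithm (the pairing) is locally constant on each stratum $U_\preceq$ and depends only on $\preceq$, and then correctly encoding ``the resulting multiset of intervals equals $B$'' — including discarding empty intervals and handling multiplicities — as a genuine first-order formula. Once that encoding is in place, semialgebraicity follows mechanically from Tarski--Seidenberg and closure of semialgebraic sets under finite Boolean operations.
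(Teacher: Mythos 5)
Your proposal is correct in outline and uses the same partition of $\PC$ into the cells $S_\preceq$ (your $U_\preceq$), but the way you cut out the fiber inside each cell is genuinely different from the paper's. The paper does not reason about the persistence algorithm's pairing at all. Instead it proves an intermediate statement (Lemma~\ref{lem:preorderfiber}) using a reparametrization-invariance result from \cite[Lemma 1.5]{leygonie2022fiber}: on a fixed $S_\preceq$, two point clouds $P,Q$ have the same degree-$i$ barcode $B$ if and only if for every bounded endpoint $b$ of $B$ and every simplex $\sigma$, $\Phi_\sigma(P)=b \iff \Phi_\sigma(Q)=b$. Then, picking an arbitrary reference $P_0$ in the (nonempty) fiber restricted to $S_\preceq$, the condition $\PH_i(\Phi(Q))=B$ becomes a finite system of polynomial equalities $\Phi_\sigma(Q)=\Phi_\sigma(P_0)$ over a fixed subset of simplices, which is manifestly semialgebraic. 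Your approach instead fixes the combinatorial output of the reduction algorithm on each cell and encodes ``this multiset of intervals equals $B$'' directly. Both are valid; the paper's route sidesteps the bookkeeping you flag (choosing a tie-break compatible with $\preceq$ so that the pairing is well-defined, discarding zero-length intervals, matching multiplicities) by deferring everything to the reparametrization lemma, whereas yours is more explicit but has to carry that bookkeeping through. One small imprecision in your last paragraph: you should not assume that a full barcode $D$ has $D_i=\emptyset$ for large $i$; the paper instead observes that if some $D_i$ with $i$ above $\dim K$ is nonempty then the fiber is empty (hence semialgebraic), and otherwise those factors in the intersection are all of $\PC$, so the intersection is effectively finite either way.
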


Our method of proof is based on those of \cite{carriere2021optimizing}, and will involve the following construction.
\begin{definition}
    Given a set $W$, a \emph{total preorder} $\preceq$ on $W$ is a relation such that
    \begin{enumerate}
        \item For all $x\in W$, $x\preceq x$.
        \item For all $x,y\in W$, either $x\preceq y$, $y \preceq x$, or both. 
        \item If $x\preceq y$ and $y\preceq z$, then $x\preceq z$.
    \end{enumerate}
    We denote by $\preord(W)$ the set of total preorders $\preceq$ of $W$. If $x \preceq y$ but not $y \preceq x$ we write $x \prec y$.
\end{definition}
This definition is relevant to the above theorem because, given any $P\in\PC$, $\Phi$ as stated in the theorem assigns a total preorder to the simplices of $K$ via the rule $\tau \preceq \sigma$ whenever $\Phi_\tau(P) \leq \Phi_\sigma(P)$. As such, we get a map from point clouds into the set of total preorders on $K$. 

\begin{definition}
    Let $K$ be a simplicial complex and $\Phi$ be an order preserving map parametrized by $\PC$. Define the map $\mathcal{T}: \PC \to \preord(K)$ such that $\mathcal{T}(P) = \preceq$ where $\preceq$ is the relation satisfying
    \begin{equation*}
        \tau \preceq \sigma \iff \Phi_\tau(P) \leq \Phi_\sigma(P).
    \end{equation*}
    We let $S_\preceq := \mathcal{T}^{-1}(\preceq)$.
\end{definition}

As we will observe, the persistence map is well-behaved on the subspaces $S_\preceq\subseteq \PC$. The reason for this is that the sets $S_\preceq$ consist of point clouds where simplices appear in the same order, the order determined by $\preceq$. We illustrate this scenario in Figure \ref{fig:partition}.
\begin{figure}[htpb]
    \centering
    \includegraphics[width=0.8\textwidth]{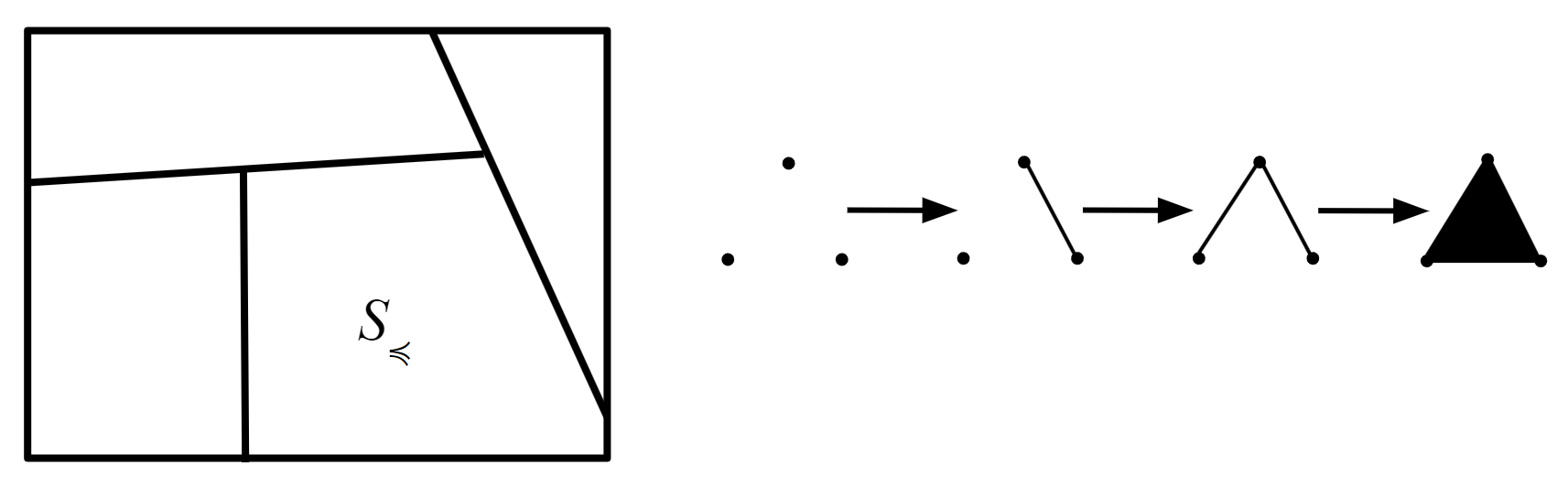}
    \caption{On the left we see the sets $S_\preceq$ partition point cloud space. On the right, a particular filtration assiciated to a set $S_\preceq$. The definition of $S_\preceq$ forces each point cloud $P\in S_\preceq$ to have their simplices appear in the same order in their associated filtration, for example the order in the filtration on the right.}
    \label{fig:partition}
\end{figure}

Insofar as proving Theorem \ref{thm:fibersemialg} is concerned, we care about the sets $S_\preceq$ since we have that
\begin{equation*}
    (\PH_i\circ \Phi)^{-1}(B) = \bigcup_{\preceq \in \preord(K)} (\PH_i\circ \Phi)^{-1}(B)\cap S_\preceq.
\end{equation*}
Since there are only finitely many elements in $\preord(K)$, this union is finite and so it suffices to prove that each set in this union is semialgebraic to prove that the union itself is semialgebraic. We will need often that persistence is well behaved on the sets $S_\preceq$ later, so let us make it a result of its own.

\begin{lemma}
    \label{lem:preorderfiber}
    Let $K$ be a finite simplicial complex. Suppose that $\Phi:\PC\to\mathbb{R}^{|K|}$ is a semialgebraic order preserving map parametrized by $\PC$. Then for any total preorder $\preceq\in\preord(K)$, $S_\preceq$ is semialgebraic. Moreover, $P, Q\in S_\preceq$ have $\PH_i(\Phi(P))=\PH_i(\Phi(Q)) = B$ if and only if, for every bounded endpoint $b$ of $B$ and $\sigma \in K$,
    \begin{equation*}
        \Phi_\sigma(P) = b \iff \Phi_\sigma(Q) = b.
    \end{equation*}
\end{lemma}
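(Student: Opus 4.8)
The plan is to prove the two assertions separately. For the semialgebraicity of $S_\preceq$, I would note that $S_\preceq$ is cut out of $\PC$ by the finite conjunction of conditions $\Phi_\tau(P) \leq \Phi_\sigma(P)$ over all pairs $(\tau,\sigma)$ with $\tau\preceq\sigma$, together with the strict conditions $\Phi_\tau(P) < \Phi_\sigma(P)$ whenever $\tau\prec\sigma$ (equivalently, $\Phi_\sigma(P)\leq\Phi_\tau(P)$ fails). Since $\Phi$ is semialgebraic, each of the sets $\{P : \Phi_\tau(P) \leq \Phi_\sigma(P)\}$ is semialgebraic: it is the projection onto the $P$-coordinates of the intersection of the graph of $\Phi$ (in $\PC\times\mathbb{R}^{|K|}$) with the semialgebraic set $\{(P,y) : y_\tau \leq y_\sigma\}$, and projections of semialgebraic sets are semialgebraic by Tarski--Seidenberg. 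As $\preord(K)$ is finite and $S_\preceq$ is a finite intersection of such sets, $S_\preceq$ is semialgebraic. (Alternatively, invoke quantifier elimination directly on the first-order formula defining $S_\preceq$.)

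For the second assertion, the key input is the standard fact that persistent homology of a filtration built from an order-preserving map $f\in\mathbb{R}^{|K|}$ depends only on the total preorder that $f$ induces on $K$ together with, for each simplex $\sigma$, the information of \emph{which} equivalence classes of the preorder correspond to barcode endpoints and their multiplicities — more precisely, for $P,Q\in S_\preceq$ the filtered complexes $\{\sigma : \Phi_\sigma(P)\leq t\}$ and $\{\sigma : \Phi_\sigma(Q)\leq t'\}$ have, as $t$ and $t'$ range over $\mathbb{R}$, the same lattice of subcomplexes appearing in the same order; hence the barcode is determined by the combinatorial data of the preorder, and the numerical values of the endpoints are exactly read off from the values $\Phi_\sigma$ takes on the equivalence classes that are "critical." Concretely: first I would show that for $P,Q\in S_\preceq$, the barcode $\PH_i(\Phi(P))$ and $\PH_i(\Phi(Q))$ have the same number of intervals and, listing the equivalence classes of $\preceq$ as $C_1\prec C_2 \prec\cdots\prec C_N$, the multiset of pairs $(\text{birth class},\text{death class})$ recording in which class of $\preceq$ each bar is born and dies is the same for $P$ and $Q$ — this follows because the simplicial boundary maps and the poset of included subcomplexes depend only on $\preceq$, so the two persistence modules are isomorphic after rescaling the index by any order isomorphism of the image of $\Phi(P)$ with the image of $\Phi(Q)$. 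Granting this, a bounded number $b$ is an endpoint of $\PH_i(\Phi(P))$ if and only if $b$ is the common value $\Phi_\sigma(P)$ on some "critical" equivalence class $C_j$ (one appearing as a birth or death class above), with multiplicity equal to the number of bars using $C_j$ for that endpoint; the "if and only if" $\Phi_\sigma(P) = b \iff \Phi_\sigma(Q) = b$ then expresses precisely that the critical classes and their labels agree for $P$ and $Q$.

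So the forward direction ($\PH$ agrees $\Rightarrow$ the endpoint conditions agree) follows once we know that the set of values realized as bounded endpoints, each with its multiplicity, determines which equivalence classes of $\preceq$ are critical; and the reverse direction ($\Rightarrow$ $\PH$ agrees) follows because the preorder plus the values on the critical classes reconstructs the barcode. The main obstacle I anticipate is making the first claim of the previous paragraph rigorous and clean: one must verify carefully that for $P,Q\in S_\preceq$ the two persistence modules $\{H_i(\Phi(P)^{-1}(-\infty,t])\}_t$ and $\{H_i(\Phi(Q)^{-1}(-\infty,t])\}_t$ are related by a \emph{monotone reparametrization} of $\mathbb{R}$ — this is where $S_\preceq$ being exactly a preorder fiber is used, since it guarantees that the two maps $\Phi(P),\Phi(Q)$ visit the same nested sequence of subcomplexes (including ties collapsing to the same subcomplex) in the same order — and that such a reparametrization carries a barcode to the barcode with correspondingly reparametrized endpoints, so that the multiset of (bounded) endpoints of one barcode is the image of the other under the bijection of critical values $\Phi_\bullet(P)\leftrightarrow\Phi_\bullet(Q)$. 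Once this reparametrization statement is established, both directions of the biconditional are immediate.
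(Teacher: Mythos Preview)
Your proposal is correct and follows essentially the same route as the paper. Both arguments show $S_\preceq$ is semialgebraic via the finite conjunction of (in)equalities $\Phi_\sigma(P)\lesseqgtr\Phi_\tau(P)$, and both reduce the second assertion to the existence of a strictly increasing reparametrization $\psi$ with $\psi\circ\Phi_\sigma(P)=\Phi_\sigma(Q)$ together with the fact that such a $\psi$ carries barcode endpoints to barcode endpoints; the paper dispatches the step you flag as the ``main obstacle'' by citing \cite[Lemma 1.5]{leygonie2022fiber} rather than arguing it from scratch.
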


\begin{proof}
    The set $S_\preceq$ consists is the set of point clouds $P\in\PC$ such that for all $\sigma,\sigma'\in K$,
    \begin{equation*}
        \Phi_\sigma(P) = \Phi_{\sigma'}(P) \iff \Phi_\sigma(P) - \Phi_{\sigma'}(P) = 0
    \end{equation*}
    if both $\sigma \preceq \sigma'$ and $\sigma'\preceq \sigma$, and
    \begin{equation*}
        \Phi_\sigma(P) > \Phi_{\sigma'}(P) \iff \Phi_\sigma(P) - \Phi_{\sigma'}(P) > 0
    \end{equation*}
    if $\sigma' \prec \sigma$.
    Both the above inequalties and equalities each define a semialgebraic set. Since finitely many such equations define $S_\preceq$, it is the intersection of finitely many semialgebraic sets, and so is semialgebraic. Note that these constraints imply that for any $P,Q\in S_\preceq$, there is a strictly increasing map $\psi: \mathbb{R} \to \mathbb{R}$ such that $\psi \circ \Phi_\sigma(P) = \Phi_\sigma(Q)$ for each $\sigma \in K$. Let us see how the condition of $P$ and $Q$ having the same barcode further constrains $\psi$.

    Fix $P \in S_\preceq$ such that $\PH_i(\Phi(P)) = B$. For any $Q \in S_\preceq$ there exists a strictly increasing map $\psi: \mathbb{R} \to \mathbb{R}$ such that $\psi\circ\Phi_\sigma(P) = \Phi_\sigma(Q)$ for all $\sigma\in K$. If additionally we have $\PH_i(\Phi(Q)) = B$, then by \cite[Lemma 1.5]{leygonie2022fiber}\footnote{While this result technically only applies to $\PH$, by inspecting the proof we see the result also holds for $\PH_i$, for each $i\geq 0$.}, for any value $b$ that is the endpoint of an interval of $B$, we have $\psi(b) = b$. Conversely, fix any strictly increasing map $\psi:\mathbb{R} \to \mathbb{R}$ such that $\psi(b) = b$ for every interval endpoint $b$ in $B$. Given $Q\in\PC$ satisfying $\psi\circ\Phi_\sigma(P) = \Phi_\sigma(Q)$ for all $\sigma \in K$, we observe that $\mathcal{T}(P) = \mathcal{T}(Q)$ and, by \cite[Lemma 1.5]{leygonie2022fiber}, $\PH_i(\Phi(P)) = \PH_i(\Phi(Q)) = B$. In summary, $Q\in S_\preceq$ has $\PH_i(\Phi(Q)) = B$ if and only if there exists a strictly increasing function $\psi$ fixing endpoints of intervals in $B$ such that $\psi\circ\Phi_\sigma(P) = \Phi_\sigma(Q)$ for all $\sigma\in K$. This happens for $P,Q\in S_\preceq$ if and only if $\Phi_\sigma(P) = b \iff \Phi_\sigma(Q) = b$ for all $\sigma\in K$, and all barcode endpoints $b$ of $B$, as desired.
\end{proof}

\begin{proof}[Proof of Theorem \ref{thm:fibersemialg}]
    To show that $(\PH_i\circ \Phi)^{-1}(B)$ is semialgebraic it suffices to show that, for each $\preceq \in \preord(K)$, the set $(\PH_i\circ \Phi)^{-1}(B)\cap S_\preceq$ is semialgebraic. If this set is empty it is automatically semialgebraic. Otherwise, there exists $P \in (\PH_i\circ \Phi)^{-1}(B)\cap S_\preceq$. Let $W\subseteq K$ be the subset of simplices $\sigma$ such that $\Phi_\sigma(P)$ is an endpoint value of $B$. By Lemma \ref{lem:preorderfiber}, $Q\in (\PH_i\circ \Phi)^{-1}(B)\cap S_\preceq$ if and only if $\Phi_\sigma(Q)=\Phi_\sigma(P)$ for all $\sigma\in W$. Define $X\subseteq \PC$ by
    \begin{equation*}
        X:= \{Q \in \PC: \Phi_\sigma(Q) - \Phi_\sigma(P) = 0 \text{ for all } \sigma \in W\}.
    \end{equation*}
    Thus we have that $(\PH_i\circ \Phi)^{-1}(B)\cap S_\preceq = X \cap S_\preceq$. Moreover $X$ is semialgebraic. To see this, note that the graph of the function $\Phi_\sigma(Q) - \Phi_\sigma(P)$ (as a function of $Q$ with $P$ fixed), which is an element of $\PC \times \mathbb{R}$, is semialgebraic. Hence its intersection with $\PC \times \{0\}$ is semialgebraic, and so by projecting we see that the set of $Q$ such that $\Phi_\sigma(Q) - \Phi_\sigma(P) = 0$ is semialgebraic. By varying $\sigma$ over $W$, we see that $X$ is the intersection of finitely many semialgebraic sets, one for each $\sigma \in W$. So $X$ is indeed semialgebraic.

    The set $S_\preceq$ is semialgebraic by Lemma \ref{lem:preorderfiber}, so $(\PH_i\circ \Phi)^{-1}(B)\cap S_\preceq = X \cap S_\preceq$ is the intersection of two semialgebraic sets, and so is semialgebraic. This moreover shows that $(\PH_i\circ \Phi)^{-1}(B)$ is semialgebraic.

    Now all that remains is to show that given a full barcode $D = \{D_i\}_{i=0}^\infty$ that the set
    \begin{equation*}
        (\PH\circ\Phi)^{-1}(D) = \bigcap_{i=0}^{\infty} (\PH_i\circ \Phi)^{-1}(D_i)
    \end{equation*}
    is semialgebraic. Since $K$ is finite, there is an $N$ such that the dimension of each $\sigma\in K$ is less than $N$. It follows that if $D_i$ is nonempty for some $i\geq N$ then $(\PH_i\circ\Phi)^{-1}(D_i)$ is empty- the homology in degree $i$ of subcomplexes of $K$ is always trivial. In this case it follows $(\PH\circ\Phi)^{-1}(D)$ is empty, and hence semialgebraic. Otherwise, $D_i$ is empty for all $i \geq N$. For such $i$ we have that $(\PH_i\circ\Phi)^{-1}(D_i) = \PC$, again because the homology in degree $i$ of subcomplexes of $K$ is always trivial. In this case,
\begin{equation*}
        (\PH\circ\Phi)^{-1}(D) = \bigcap_{i=0}^{\infty} (\PH_i\circ \Phi)^{-1}(D_i) = \bigcap_{i=0}^{N-1} (\PH_i\circ \Phi)^{-1}(D_i)
\end{equation*}
    Hence $(\PH\circ\Phi)^{-1}(D)$ is a finite intersection of semialgebraic sets, and therefore semialgebraic.
\end{proof}

\begin{corollary}
    Let $D = \{D_i\}_{i = 0}^{\infty}$ be a full barcode. Then the spaces $(\PHc)^{-1}(D)$ and $(\PHvr)^{-1}(D)$ are both semialgebraic, as are the spaces $(\PHc_i)^{-1}(D_i)$ and $(\PHvr_i)^{-1}(D_i)$ for each $i \geq 0$.
\end{corollary}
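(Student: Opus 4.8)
The plan is to obtain this as a direct consequence of Theorem \ref{thm:fibersemialg} applied to the complete simplicial complex $K(n)$ together with the two explicit parametrized maps $\Phi^{\VR}$ and $\Phi^{\C}$ introduced in Section \ref{sec:filt}. Recall from there that $\PH(\Phi^{\VR}(P)) = \PHvr(P)$ and $\PH(\Phi^{\C}(P)) = \PHc(P)$ for every $P\in\PC$, and similarly in each fixed homological degree $i$. So the first step is simply to verify that $\Phi^{\VR}$ and $\Phi^{\C}$ satisfy the hypotheses of Theorem \ref{thm:fibersemialg}, namely that they are semialgebraic order preserving maps parametrized by $\PC$.

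For the order preserving property: if $\tau\subseteq\sigma$ then $\Phi^{\VR}_\tau(P) = \max_{i,j\in\tau}\tfrac12 d(p_i,p_j) \leq \max_{i,j\in\sigma}\tfrac12 d(p_i,p_j) = \Phi^{\VR}_\sigma(P)$, and likewise any point $q$ enclosing $\{p_i : i\in\sigma\}$ at radius $r$ also encloses $\{p_i : i\in\tau\}$ at radius $r$, so $\Phi^{\C}_\tau(P)\leq\Phi^{\C}_\sigma(P)$; hence both maps take values in the order preserving cone for every $P$. For semialgebraicity: each coordinate $\Phi^{\VR}_\sigma$ is a finite maximum of the functions $P\mapsto \tfrac12 d(p_i,p_j)$, each of which is semialgebraic (its graph is cut out by a polynomial equation on $\PC\times\mathbb{R}$), and a finite max of semialgebraic functions is semialgebraic; a coordinate-wise semialgebraic map is semialgebraic. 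That $\Phi^{\C}$ is semialgebraic is exactly the Proposition preceding Theorem \ref{thm:fibersemialg}, which in turn rests on Lemma \ref{lem:semialgenclosing}.

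With the hypotheses checked, the second step is to invoke Theorem \ref{thm:fibersemialg} with $K = K(n)$ and $\Phi = \Phi^{\VR}$: it yields that $(\PHvr_i)^{-1}(D_i) = (\PH_i\circ\Phi^{\VR})^{-1}(D_i)$ is semialgebraic for each $i\geq 0$ and barcode $D_i$, and that $(\PHvr)^{-1}(D) = (\PH\circ\Phi^{\VR})^{-1}(D)$ is semialgebraic for each full barcode $D$. Applying the same theorem with $\Phi = \Phi^{\C}$ gives the corresponding statements for $(\PHc_i)^{-1}(D_i)$ and $(\PHc)^{-1}(D)$. This exhausts the four claims.

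I do not expect a genuine obstacle here, since the statement is a packaging of Theorem \ref{thm:fibersemialg}; the only points requiring care are the routine verifications above — that a finite max of semialgebraic functions is semialgebraic (used for $\Phi^{\VR}$), and citing Lemma \ref{lem:semialgenclosing} / the preceding Proposition for $\Phi^{\C}$ — and making sure the identities $\PH\circ\Phi^{\VR}=\PHvr$, $\PH\circ\Phi^{\C}=\PHc$ are quoted in both the graded and ungraded forms.
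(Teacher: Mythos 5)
Your proof is correct and is essentially the paper's own argument: invoke Theorem \ref{thm:fibersemialg} with $K=K(n)$ and $\Phi\in\{\Phi^{\VR},\Phi^{\C}\}$, using the identities $\PH\circ\Phi^{\VR}=\PHvr$ and $\PH\circ\Phi^{\C}=\PHc$, with semialgebraicity of $\Phi^{\C}$ coming from the preceding proposition. The extra verifications you supply (order-preservation, finite max of semialgebraic functions) are routine and consistent with what the paper takes for granted.
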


\begin{proof}
    Applying Theorem \ref{thm:fibersemialg}, this follows immediately from the facts that $\Phi^{\C}$ and $\Phi^{\VR}$ are semialgebraic, combined with the fact that $(\PHc_i)^{-1}(D_i) = (\PH_i\circ \Phi^{\C})^{-1}(D_i)$ and $(\PHvr_i)^{-1}(D_i) = (\PH_i\circ \Phi^{\VR})^{-1}(D_i)$.
\end{proof}

Later we will need to understand the interior and boundary of the sets $S_\preceq$, so we record a basic definition and fact regarding these sets here.

\begin{definition}
    We say that a point cloud $P$ is interior to $S_\preceq$ if there is an open ball $B$ in $\PC$ such that $P\in B \subseteq S_\preceq$.
\end{definition}

The following proposition shows that being interior to a set $S_\preceq$ is a generic condition.

\begin{proposition}
\label{prop:genposn}
The set of point clouds that are not interior to any set $S_\preceq$ is semialgebraic and has dimension less than $nd$.
\end{proposition}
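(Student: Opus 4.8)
The plan is to show that the ``bad'' set $A$ of point clouds not interior to any $S_\preceq$ is contained in a finite union of semialgebraic sets of dimension less than $nd$, and that $A$ is itself semialgebraic. First I would observe that a point cloud $P$ fails to be interior to $S_{\mathcal{T}(P)}$ precisely when every neighborhood of $P$ contains a point cloud $Q$ with $\mathcal{T}(Q)\neq \mathcal{T}(P)$; equivalently, $P$ lies on the topological boundary of $S_{\mathcal{T}(P)}$. Since $\PC = \bigsqcup_{\preceq} S_\preceq$ is a finite partition into semialgebraic sets (Lemma \ref{lem:preorderfiber}), we have $A = \bigcup_\preceq (S_\preceq \setminus \operatorname{int} S_\preceq)$, a finite union of semialgebraic sets (boundaries of semialgebraic sets are semialgebraic), hence $A$ is semialgebraic. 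So the content is the dimension bound.

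For the dimension bound, the key point is to identify what a defining condition for ``$P$ on the boundary of $S_\preceq$'' looks like. By the description of $S_\preceq$ in Lemma \ref{lem:preorderfiber}, membership is cut out by equalities $\Phi_\sigma(P) = \Phi_{\sigma'}(P)$ (for pairs with $\sigma\preceq\sigma'$ and $\sigma'\preceq\sigma$) and strict inequalities $\Phi_\sigma(P) > \Phi_{\sigma'}(P)$ (for $\sigma'\prec\sigma$). A point $P\in S_\preceq$ is interior to $S_\preceq$ as soon as none of the strict inequalities is ``tight'' — i.e. whenever for every pair with $\sigma'\prec\sigma$ we actually have $\Phi_\sigma(P) > \Phi_{\sigma'}(P)$ strictly and this persists in a neighborhood, which it does by continuity of $\Phi$. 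The subtlety is the equalities: an $S_\preceq$ defined by a nontrivial equality has empty interior in $\PC$, so a $P$ in such an $S_\preceq$ is automatically in $A$. Thus I would split $\preord(K)$ into the total orders (no forced equalities) and the proper preorders (at least one forced equality $\sigma \sim \sigma'$ with $\sigma\neq\sigma'$). For a proper preorder, $S_\preceq \subseteq \{P : \Phi_\sigma(P) - \Phi_{\sigma'}(P) = 0\}$ for some fixed pair $\sigma\neq\sigma'$ with $\Phi_\sigma,\Phi_{\sigma'}$ not identically equal as functions on $\PC$; this is a semialgebraic set, and I would argue it has dimension $< nd$ because the function $\Phi_\sigma - \Phi_{\sigma'}$ is semialgebraic and not identically zero, so its zero set is a proper semialgebraic subset of the irreducible $nd$-dimensional set $\PC$ — using that for $\Phi^{\VR}$ the map is piecewise polynomial and for $\Phi^{\C}$ it is piecewise rational, so $\Phi_\sigma - \Phi_{\sigma'}$ vanishes identically on an open set only if $\sigma,\sigma'$ are ``geometrically forced'' to coincide (e.g. $\sigma\subseteq\sigma'$ with equal max edge length generically, which does not happen for the relevant pairs). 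For a total order $\preceq$, every $P\in S_\preceq$ with all the strict inequalities strict is interior, so $S_\preceq \setminus \operatorname{int} S_\preceq \subseteq \bigcup_{\sigma'\prec\sigma}\{P : \Phi_\sigma(P) = \Phi_{\sigma'}(P)\}$, again a finite union of proper zero sets, each of dimension $< nd$ for the same reason.

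Putting this together, $A \subseteq \bigcup_{\sigma\neq\sigma'}\{P\in\PC : \Phi_\sigma(P) = \Phi_{\sigma'}(P)\}$ once we discard the pairs for which this equality holds identically — and for those pairs, the preorders forcing that equality do not contribute boundary points beyond what is already covered, or more cleanly, we simply note such pairs never create a boundary since the equality is automatic. A finite union of semialgebraic sets of dimension $< nd$ has dimension $< nd$, which gives the claim. The main obstacle I anticipate is the bookkeeping around which pairs $\{\sigma,\sigma'\}$ can have $\Phi_\sigma \equiv \Phi_{\sigma'}$ on a full-dimensional subset: for the Vietoris--Rips map one must rule out that $\Phi^{\VR}_\sigma(P) = \Phi^{\VR}_{\sigma'}(P)$ on an open set of $P$ for $\sigma\neq\sigma'$, which is false because distinct pairs of vertices realize their squared distances as distinct coordinate polynomials and the ``max'' structure means generically a unique simplex attains each critical value; for \v{C}ech one invokes the formula of Proposition \ref{prop:circumradeqn} together with genericity of affine independence to see the enclosing-radius functions are generically distinct. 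I would isolate this as a short lemma (``for $\sigma\neq\sigma'$ the locus $\{\Phi_\sigma = \Phi_{\sigma'}\}$ is a proper semialgebraic subset of $\PC$'') and cite the analogous, more detailed genericity arguments that appear later in Sections \ref{sec:vrrigid} and \ref{sec:cechrigid}.
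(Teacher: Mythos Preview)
Your first paragraph is correct and matches the paper: $A = \bigcup_\preceq (S_\preceq \setminus \operatorname{int} S_\preceq)$ is a finite union of semialgebraic sets, each contained in $\bd(S_\preceq)$. The divergence is in how you argue the dimension bound.

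Your dichotomy between total orders and proper preorders breaks down. The claim that ``an $S_\preceq$ defined by a nontrivial equality has empty interior'' is false for both $\Phi^{\VR}$ and $\Phi^{\C}$. Take $n=3$, $\Phi = \Phi^{\VR}$: any $P$ with $d(p_1,p_2) < d(p_1,p_3) < d(p_2,p_3)$ has $\Phi_{\{1,2,3\}}(P) = \Phi_{\{2,3\}}(P)$, so $\mathcal{T}(P)$ is a \emph{proper} preorder, yet $S_{\mathcal{T}(P)}$ is the open set cut out by those two strict inequalities. In fact every nonempty $S_\preceq$ in the image of $\mathcal{T}$ arises from a proper preorder (singletons always tie at $0$; for VR every higher simplex always ties with some edge), so your ``total order'' case is vacuous and the ``proper preorder'' case must absorb full-dimensional cells. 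The obstacle you flag at the end is not mere bookkeeping but a genuine failure of the argument: because each $\Phi_\sigma$ is only \emph{piecewise} polynomial or rational, the difference $\Phi_\sigma - \Phi_{\sigma'}$ can vanish on an open set without vanishing identically, so $\{\Phi_\sigma = \Phi_{\sigma'}\}$ can have dimension $nd$ even when $\Phi_\sigma\not\equiv\Phi_{\sigma'}$. Your proposed lemma (``for $\sigma\neq\sigma'$ the locus $\{\Phi_\sigma=\Phi_{\sigma'}\}$ is a proper semialgebraic subset'') is therefore false as stated.

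The paper sidesteps all of this by not analyzing $\Phi$ at all. It simply invokes the general semialgebraic fact, recorded as Lemma~\ref{lem:boundarydim} in the appendix, that $\dim \bd(X) < nd$ for any semialgebraic $X\subseteq\mathbb{R}^{nd}$. Since you already observed $S_\preceq\setminus\operatorname{int} S_\preceq\subseteq\bd(S_\preceq)$ in your first paragraph, that single citation finishes the proof immediately --- and works for an arbitrary semialgebraic order-preserving $\Phi$, not just $\Phi^{\VR}$ and $\Phi^{\C}$.
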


\begin{proof}
    Denote the set of point clouds not interior to any $S_\preceq$ by $N$, and let $N_\preceq := N\cap S_\preceq$. The set of point clouds interior to $S_\preceq$ is by definition the interior of $S_\preceq$, when viewed as a subspace of $\PC$, and hence is semialgebrac. Hence $N$ is the complement of a finite union of semialgebraic sets, and so is semialgebraic, implying that each $N_\preceq$ is also semialgebraic. Also, $N_\preceq$ contains no points in the interior of $S_\preceq$ and so is a subset of the boundary of $S_\preceq$, viewed as a subspace of $\mathbb{R}^{nd}$. We will denote this boundary by $\bd(S_\preceq)$. Using Lemma \ref{lem:boundarydim} in the appendix, we have 
    \begin{equation*}
        \dim N = \max_{\preceq \in \preord(K(n))}\dim N_\preceq \leq \max_{\preceq \in \preord(K(n))} \dim \bd(S_\preceq) < nd,
    \end{equation*}
    giving the result.
\end{proof}

Lastly, we will need that we have the following generic relationship between isometric and congruent point clouds on the sets $S_\preceq$.

\begin{lemma}
    \label{lem:isomeansconj}
    Let $\Phi = \Phi^{\C}$ or $\Phi = \Phi^{\VR}$, and suppose that $P \in \PC$ is generic. If $P,Q \in S_\preceq$ are isometric, then $P$ and $Q$ are congruent. 
\end{lemma}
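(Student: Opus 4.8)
The plan is to exploit two facts: a generic point cloud has all $\binom{n}{2}$ pairwise distances distinct, and membership in a common cell $S_\preceq$ forces $P$ and $Q$ to list their pairwise distances in exactly the same order. Putting these together with the hypothesis that $P$ and $Q$ are isometric (hence have the same multiset of pairwise distances) will pin the distances down edge by edge.

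First I would record the elementary genericity statement: for generic $P\in\PC$, the squared pairwise distances $\|p_i-p_j\|^2$ are pairwise distinct. For any two distinct pairs $\{i,j\}\neq\{k,l\}$, the polynomial $\|p_i-p_j\|^2-\|p_k-p_l\|^2$ is not identically zero on $\RR^{nd}$ (one can exhibit a point cloud realizing unequal values), so its zero set is a proper algebraic subset; the union of these finitely many subsets is again a proper algebraic subset, and outside it all pairwise distances of $P$ differ. Next I would observe that for \emph{every} point cloud $R$ and every $1$-simplex $\{i,j\}$, one has $\Phi_{\{i,j\}}(R)=\tfrac12 d(r_i,r_j)$ in both cases $\Phi=\Phi^{\VR}$ and $\Phi=\Phi^{\C}$: this is the definition in the Vietoris--Rips case, and for \v{C}ech it holds because the minimal enclosing sphere (equivalently, minimal circumsphere) of two points is the sphere having them as antipodal points. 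Consequently the restriction of the total preorder $\mathcal{T}(R)$ to edge-simplices records precisely the relative order of the pairwise distances of $R$.

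Now apply this with $R=P$. Since $P$ is generic, its pairwise distances are distinct, so $\preceq\ =\mathcal{T}(P)$ restricted to edges is a \emph{strict} total order; write $e_1\prec e_2\prec\cdots\prec e_N$ (with $N=\binom{n}{2}$) for the edges, so that $d(p_{e_1})<\cdots<d(p_{e_N})$, using the shorthand $d(p_e)=d(p_i,p_j)$ for $e=\{i,j\}$. Because $Q\in S_\preceq$ has $\mathcal{T}(Q)=\preceq$, the values $\Phi_{e_1}(Q),\ldots,\Phi_{e_N}(Q)$ are also strictly increasing, so $Q$ likewise has distinct pairwise distances ordered as $d(q_{e_1})<\cdots<d(q_{e_N})$. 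Finally, since $P$ and $Q$ are isometric, the multiset of pairwise distances of $Q$ equals that of $P$; as all distances in each of $P$ and $Q$ are distinct, the two strictly increasing sequences of distances must agree term by term, i.e.\ $d(p_{e_k})=d(q_{e_k})$ for every $k$. Hence $d(p_i,p_j)=d(q_i,q_j)$ for all pairs $\{i,j\}$, which is exactly the assertion that $P$ and $Q$ are congruent.

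The step I would flag as the main obstacle — though it is really just a matter of care — is ensuring that the cell condition genuinely transfers strictness of the ordering from $P$ to $Q$. One must use that $Q\in S_\preceq$ means $\mathcal{T}(Q)=\preceq$ \emph{exactly}: a tie $\Phi_\sigma(Q)=\Phi_\tau(Q)$ between two edges would force both $\sigma\preceq\tau$ and $\tau\preceq\sigma$ in $\preceq$, contradicting the antisymmetry of $\preceq$ on edges, which in turn holds precisely because $P$ has distinct pairwise distances. Everything else in the argument is bookkeeping, and the proof is uniform across the \v{C}ech and Vietoris--Rips cases thanks to the identity $\Phi_{\{i,j\}}=\tfrac12 d(\cdot,\cdot)$ on edges.
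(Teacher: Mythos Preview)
Your proof is correct and follows essentially the same approach as the paper: use genericity to make the pairwise distances of $P$ distinct, use the common cell $S_\preceq$ (via the identity $\Phi_{\{i,j\}}=\tfrac12 d(\cdot,\cdot)$ for both filtrations) to force $P$ and $Q$ to have their pairwise distances in the same strict order, and then use the isometry hypothesis to match the two ordered lists term by term. Your write-up is in fact a bit more careful than the paper's in justifying the genericity claim and the \v{C}ech edge identity.
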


\begin{proof}
    By genericity of $P$, we may suppose that the pairwise distances $d(p_i,p_j)$ of $P$ are all distinct. Now suppose $P,Q \in S_\preceq$. In $S_\preceq$, the simplices of $K(n)$ always appear in the same order. In particular, the same is true for the pairs $\{i,j\} \in K(n)$. For either choice of $\Phi$ in the theorem, $\Phi_{\{i,j\}}(P) = \frac{1}{2}d(p_i,p_j)$. Therefore, $d(p_i,p_j)$ is the $k^\mathrm{th}$ smallest pairwise distance of points in $P$ if and only if the same is true for the distance $d(q_i,q_j)$ in $Q$.

    If $\phi:P \to Q$ is an isometry, then $P$ and $Q$ have the same multiset of edge lengths. Since $d(p_i,p_j) = d(\phi(p_i),\phi(p_j))$, it follows that $d(\phi(p_i),\phi(p_j))$ is the $k^\mathrm{th}$ smallest pairwise distance of points in $Q$ if and only if $d(p_i,p_j)$ is the $k^\mathrm{th}$ smallest pairwise distance of points in $P$. The latter happens if and only if $d(q_i,q_j)$ is the $k^\mathrm{th}$ smallest pairwise distance of points in $Q$. Hence we have:
    \begin{equation*}
        d(p_i,p_j) = d(\phi(p_i), \phi(p_j)) = d(q_i,q_j).
    \end{equation*}
    This is true for all pairs $\{i,j\}$ where $i\neq j$ and so $P$ and $Q$ are congruent.
\end{proof}

\section{An Upper Bound}
\label{sec:upperbound}

We are now ready to prove our first result about the dimension of level sets of the persistence map. We will make use of the following lemma, which describes the space of point clouds with a particular spanning tree.

\begin{lemma}
    Let $T$ be a tree with vertex set $[n]$ and edge set $E$ with positive real weights $w_{ij}$ for each $\{i,j\}\in E$. The space
    \begin{equation*}
        S = \{(p_1,\ldots,p_n)\in \PC:\text{if }\{i,j\} \in E \text{ then }d(p_i,p_j) = w_{ij}\}.
    \end{equation*}
    is semialgebraic and has dimension less than or equal to $nd - n + 1$
\end{lemma}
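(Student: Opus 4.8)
The plan is to show $S$ is semialgebraic directly from its defining equations, and then to bound its dimension by covering $S$ with the image of an explicit parametrization whose domain has dimension exactly $nd-n+1$. For semialgebraicity, note that ``$d(p_i,p_j)=w_{ij}$'' is equivalent to the polynomial equation $\|p_i-p_j\|^2=w_{ij}^2$, so $S$ is the intersection of $\PC$ --- which is semialgebraic as the complement in $\mathbb{R}^{nd}$ of the algebraic set $\bigcup_{i\neq j}\{p_i=p_j\}$ --- with the finitely many algebraic sets $\{\|p_i-p_j\|^2=w_{ij}^2\}$ over $\{i,j\}\in E$; hence $S$ is semialgebraic.

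For the dimension bound I would root $T$ at the vertex $1$ and fix an ordering $1=v_1,v_2,\dots,v_n$ of $[n]$ such that for every $k\ge 2$ the parent $v_{\pi(k)}$ of $v_k$ --- the neighbour of $v_k$ on the unique path to the root --- has $\pi(k)<k$; such an ordering exists for any tree, for instance a breadth-first ordering. Define $\Psi\colon \mathbb{R}^d\times(S^{d-1})^{n-1}\to(\mathbb{R}^d)^n$ by $\Psi(x,u_2,\dots,u_n)=(p_{v_1},\dots,p_{v_n})$, where $p_{v_1}=x$ and $p_{v_k}=p_{v_{\pi(k)}}+w_{v_{\pi(k)}v_k}u_k$ for $k\ge2$. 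Unwinding the recursion, $p_{v_k}$ equals $x$ plus a sum of scaled unit vectors along the root-to-$v_k$ path, so each coordinate of $\Psi$ is linear (in particular polynomial) in the inputs; thus $\Psi$ is semialgebraic and its domain is semialgebraic of dimension $d+(n-1)(d-1)=nd-n+1$.

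The key claim is $S\subseteq\operatorname{im}\Psi$: given $(p_1,\dots,p_n)\in S$, set $x=p_{v_1}$ and $u_k=(p_{v_k}-p_{v_{\pi(k)}})/w_{v_{\pi(k)}v_k}$ for $k\ge2$; since $d(p_{v_k},p_{v_{\pi(k)}})=w_{v_{\pi(k)}v_k}>0$ each $u_k$ is a genuine unit vector, and $\Psi(x,u_2,\dots,u_n)=(p_1,\dots,p_n)$ by construction. A semialgebraic map does not increase dimension, and dimension is monotone under inclusion of semialgebraic sets (both standard, cf.\ Section~\ref{sec:algebra}), so $\dim S\le\dim\operatorname{im}\Psi\le\dim\big(\mathbb{R}^d\times(S^{d-1})^{n-1}\big)=nd-n+1$.

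The only step needing care is the combinatorial input that a tree admits a parent-respecting vertex order, so that the recursion defining $\Psi$ is well defined --- this is precisely where acyclicity of $T$ enters, and it is what makes the edge constraints ``independent'' for the purposes of the upper bound. Everything else is routine bookkeeping with the basic properties of semialgebraic sets.
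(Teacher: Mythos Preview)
Your proof is correct and takes essentially the same approach as the paper: root the tree, parametrize configurations by the root's position together with unit direction vectors along the edges, and bound $\dim S$ by the dimension $nd-n+1$ of $\mathbb{R}^d\times(S^{d-1})^{n-1}$. The only cosmetic difference is that the paper phrases this as a homeomorphism between the product space and the superset $M\supseteq S$ obtained by dropping the $\PC$ constraint, whereas you phrase it as $S$ lying in the image of a semialgebraic map from the product space; both routes yield the bound immediately.
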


In fact, with very little more work it can be shown that the dimension of $S$ is equal to $nd - n + 1$, but we do not need this.

\begin{proof}
    The space $S$ is the intersection of a space defined by finitely many polynomial equalities with $\PC$ a semialgebraic set. Hence $S$ is semialgebraic. Moreover, $S$ is a subspace of 
    \begin{equation*}
        M: = \{(p_1,\ldots,p_n)\in \mathbb{R}^{nd}:\text{if }\{i,j\} \in E \text{ then }d(p_i,p_j) = w_{ij}, \text{for some $k$}\}.
    \end{equation*}
    The space $M$ is semialgebraic by similar reasoning, and moreover is homeomorphic to
    \begin{equation*}
    \mathbb{R}^d \times (\mathbb{S}^{d-1})^{n-1},
    \end{equation*}
    To see this, fix a root $R$ of $T$. This choice fixes an orientation on the edges of $T$ away from $R$. Let $E'$ denote the set of directed edges $(i,j)$ of $T$ from $i$ to $j$. For each $(i,j) \in E'$, we have a unit vector $v_{(i,j)}(P)= (p_j-p_i)/\|p_j-p_i\|$ indicating the direction from $p_i$ to $p_j$. Pick an ordering $e_1,\ldots, e_{n-1}$ of the elements of $E'$. Since a point cloud $P$ in $M$ is specified by
    \begin{enumerate}
    \item the location of $p_R$, and
    \item the direction $v_{\{i,j\}}(P)$ for each $\{i,j\} \in E'$,
    \end{enumerate}
    we have a homeomorphism
    \begin{align*}
        M &\to \mathbb{R}^d \times (\mathbb{S}^{d-1})^{n-1}\\
        P &\mapsto (p_R, v_{e_1}(P), \ldots, v_{e_{n-1}}(P))
    \end{align*}
    Hence $M$ is homeomorphic to $\mathbb{R}^d \times (\mathbb{S}^{d-1})^{n-1}$, which as a manifold has dimension $nd - n + 1$. So $M$ has the same dimension since the dimension of a semialgebraic set that is a manifold agrees with its manifold dimension. Since $S \subseteq M$, $\dim S \leq \dim M$.
\end{proof}

Now we prove an upper bound for the dimension of fibers of the persistence map.

\dimup*

\begin{proof}

Fix any point cloud $P = \{p_1,\ldots,p_n\}$ and $D_0=\PHc_0(P)$ (and so $D_0 = \PHvr_0(P)$). Suppose the finite right endpoint values of $D_0$ are $r_1<\ldots< r_m$. By Lemma \ref{lem:minspantree} there exists a tree $T$ with vertex set $[n]$ and edge set $E$ such that if $\{i,j\} \in E$, then $d(p_i,p_j) = 2r_k$ for some $1\leq k \leq m$. Hence $P$ lies in the following subspace of $\mathbb{R}^{nd}$
\begin{equation*}
    \{(q_1,\ldots,q_n)\in \PC:\text{if }\{i,j\} \in E \text{ then }d(q_i,q_j) = 2r_k, \text{for some $k$}\}.
\end{equation*}

The tree $T$ in general depends not only on $D_0$, but also on the geometry of $P$ (and is not necessarily uniquely defined!). However, if we let $\mathcal{S}$ denote the set of trees $T$ with vertex set $[n]$, and $\mathcal{R}(T)$ the set of maps from the edges of a given spanning tree $T$ on $[n]$ to the set $\{2r_1,\ldots, 2r_k\}$ we have for now arbitrary $P$ with $\PHc(P) = D_0$ that
\begin{equation*}
    P \in \bigcup_{T\in \mathcal{S}} \bigcup_{\phi \in \mathcal{R}(T)} \{(q_1,\ldots,q_n)\in \PC: \text{if }\{i,j\} \in E(T) \text{ then }d(q_i,q_j) = \phi(\{i,j\})\}.
\end{equation*}
However, each set
\begin{equation*}
    \{(q_1,\ldots,q_n)\in \PC: \text{if }\{i,j\} \in E(T) \text{ then }d(q_i,q_j) = 2\phi(\{i,j\})\}
\end{equation*}
has dimension $nd - n + 1$ by the previous lemma. Hence $P$ lies in a finite union of semialgebraic sets of dimension at most $nd - n + 1$. Hence $(\PHc)^{-1}(D)$ has dimension at most $nd - n + 1$.

Note both $\mathcal{S}$ and the sets $\mathcal{R}(T)$ are all finite. Thus a point cloud $P$ with \v{C}ech barcode $D$ lies on a union of finitely many $nd - n + 1$ dimensional manifolds. So $\PH^{-1}(D)$ has dimension at most $nd - n + 1$.
\end{proof}

Using the results of \cite{smith2022families} we observe that this upper bound is sharp when $d=2$ and we take \v{C}ech persistence.

\begin{proposition}
    Let $D$ be any full barcode with $D_i$ empty for all $i>0$. In the case where $d = 2$, the space $(\PHc)^{-1}(D)$ has dimension $nd - n + 1 = n+1$.
\end{proposition}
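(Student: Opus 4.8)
The upper bound $\dim(\PHc)^{-1}(D)\le nd-n+1=n+1$ is exactly the content of the preceding theorem, so the plan is to match it from below by exhibiting an $(n+1)$-dimensional submanifold sitting inside the fiber. The first step is to unwind the hypothesis on $D$. Since $d=2$, the degree-$i$ \v{C}ech barcode of any point cloud is automatically empty for $i\ge 2$, so $\PHc(Q)=D$ amounts to the two conditions $\PHc_0(Q)=D_0$ and $\PHc_1(Q)=\emptyset$. By Lemma~\ref{lem:basicH0} and Lemma~\ref{lem:minspantree}, $D_0$ is equivalent data to the multiset $\{2r_1,\dots,2r_{n-1}\}$ of doubled finite right endpoints of $D_0$: a point cloud $Q$ satisfies $\PHc_0(Q)=D_0$ if and only if some (equivalently, every) minimal spanning tree of $Q$ has exactly this multiset of edge lengths. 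In particular the hypothesis forces $D_0$ to have the shape described in Lemma~\ref{lem:basicH0}, and conversely the construction below realizes any such $D_0$, so the fiber is nonempty.

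For the construction, let $T$ be the path $1-2-\cdots-n$ and assign its $k$-th edge the weight $2r_{\sigma(k)}$ for some bijection $\sigma$. Consider the ``nearly collinear'' configurations $p_i=(x_i,y_i)$ with $x_1<\cdots<x_n$, with $d(p_i,p_{i+1})=2r_{\sigma(i)}$ for all $i$, and with $\max_i|y_i|$ small compared to the $r_j$ (precisely: smaller than half the minimum gap $x_{i+1}-x_i$). These form a nonempty set $U$ that is open in $\mathcal M_T:=\{Q\in\PC:\ d(q_i,q_j)=2r_{\sigma(k)}\text{ whenever }\{i,j\}\text{ is the }k\text{-th edge of }T\}$, and by the proof of the spanning-tree lemma preceding the theorem $\mathcal M_T$ is a smooth manifold of dimension $nd-n+1=n+1$ (it sits as an open subset of $\mathbb R^d\times(\mathbb S^{d-1})^{n-1}$); hence $\dim U=n+1$. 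On $U$ the graph $T$ is the minimal spanning tree: for every non-edge $\{i,j\}$ of $T$ with $i<j$ the distance $d(p_i,p_j)$ is close to $\sum_{k=i}^{j-1}2r_{\sigma(k)}$, which strictly exceeds every tree-edge length $2r_{\sigma(k)}$ with $i\le k<j$, so Kruskal's algorithm selects exactly $E(T)$ (and one can further arrange $T$ to be the \emph{unique} minimal spanning tree even when $D_0$ has repeated right endpoints, by a generic choice inside $\mathcal M_T$). Hence $\PHc_0(Q)=D_0$ for every $Q\in U$.

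It remains to show $\PHc_1(Q)=\emptyset$ for all $Q\in U$, which is the step carrying the real content: this is precisely the kind of persistence-preserving family analyzed by Smith and Kurlin~\cite{smith2022families}, and it is their results that I would cite here. The subtlety is that ``$\PHc_1=\emptyset$'' is \emph{not} an open condition in general --- a thin acute triangle violates it while its collinear degeneration satisfies it --- so one cannot simply perturb a single realizing point cloud; the nearly collinear regime is chosen precisely so that hole-freeness is robust across the whole family. Concretely, the bound on $\max_i|y_i|$ forces every sub-triple $p_i,p_k,p_j$ with $i<k<j$ to be \emph{strictly obtuse at} $p_k$ (expand $d(p_i,p_j)^2-d(p_i,p_k)^2-d(p_k,p_j)^2=-2(p_i-p_k)\cdot(p_j-p_k)$: the $x$-contribution is negative and bounded away from $0$ by the gap condition, while the $y$-contribution is negligible). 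For an obtuse triangle the corresponding $2$-simplex enters the \v{C}ech filtration exactly when its longest edge does, hence no later than the last of its three edges; triangulating an arbitrary $1$-cycle in the $1$-skeleton of $\C(Q,r)$ by such ``fans'' then exhibits it as a boundary, so $H_1(\C(Q,r))=0$ for every $r$ and $\PHc_1(Q)=\emptyset$. Therefore $U\subseteq(\PHc)^{-1}(D)$ with $\dim U=n+1$, and together with the upper bound this gives $\dim(\PHc)^{-1}(D)=n+1$. I expect this last verification --- trivial degree-one \v{C}ech persistence at all scales for nearly collinear point clouds --- to be the main obstacle to writing out fully, both because it is the only genuinely geometric input and because the non-openness of the condition forces the argument to be global over $U$ rather than a perturbation of a single cloud.
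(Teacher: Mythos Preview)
Your approach is essentially the paper's: both exhibit an $(n+1)$-dimensional family of nearly straight configurations placed along the path $1\text{--}2\text{--}\cdots\text{--}n$, handle $D_0$ via the minimal-spanning-tree lemmas, cite Smith--Kurlin for the vanishing of $\PHc_1$, and note that $H_i=0$ for $i\ge 2$ in the plane (the paper spells out the Alexander-duality justification you leave implicit; it also uses the slightly different but equivalent angle condition $\angle(p_j-p_i,(1,0))<\pi/4$ in place of your bound on the $y$-coordinates). One caveat on your self-contained degree-one sketch: fanning a cycle from a fixed apex may require diagonals not known to lie in $\C(Q,r)$, so ``fans'' as stated do not quite work --- but your obtuse-at-the-middle-index observation does give a correct proof if you instead iteratively remove the smallest-index vertex from a simple cycle (the longest side of the resulting triangle is always one of the two incident cycle edges), and in any case you, like the paper, fall back on Smith--Kurlin.
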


\begin{proof}
Suppose that the $n-1$ bounded right endpoints of $D_0$ are $r_1,\ldots,r_{n-1}$. Let $X \subseteq \PC$ denote the subset of point clouds $P = \{p_1,\ldots,p_n\}\in \PC$ such that
\begin{enumerate}
    \item $d(p_{i+1}, p_i) = 2r_i$, and
    \item the angle between the vectors $p_{j} - p_i$ and $(1,0)$ is less than $\pi / 4$ any $i<j$. 
\end{enumerate}
By methods used in the previous proof we can show that $X$ has dimension $n+1$.

The \v{C}ech filtration of any $P \in X$ is a filtration of closed, locally contractible, and proper subsets of the plane, and hence has trivial persistence in degrees greater than 1 by Alexander duality and the universal coefficient theorem. Meanwhile, by combining Proposition 3.3 and Lemma 3.5 from \cite{smith2022families}, $P$ must also have trivial persistence in degree 1. Hence it remains to show that $P$ has barcode $D_0$ in degree zero.

Let $T$ be the minimal spanning tree of $P$ with edge set $E$. Given $i < j <k$, suppose that $\{i,k\}, \{j,k\} \in E$. Since the vectors $p_{j} - p_i$ and $p_k-p_j$ both make an angle with $(1,0)$ of less than $\pi/4$, the angle between $p_i - p_j$ and $p_k - p_j$ is obtuse. Hence $d(p_i, p_j) < d(p_i, p_k)$. Let $E'$ denote the set $E$, but with $\{i,k\}$ replaced by $\{i,j\}$.
We have that
\begin{equation*}
    \sum_{\{i,j\}\in E'}d(p_i,p_j)  < \sum_{\{i,j\}\in E}d(p_i,p_j),
\end{equation*}
contradicting that $T$ is a minimal spanning tree. Similarly it cannot be the case that $\{i,k\}, \{i,j\} \in E$. Therefore, $E$ consists of the edges $\{i,i+1\}$ for $1\leq i <n$. Lemmas \ref{lem:basicH0} and \ref{lem:minspantree} imply that the \v{C}ech persistence module of $P$ has barcode $D_0$ in degree zero.
\end{proof}

The reason this proof does not apply to Vietoris-Rips filtrations is that there is no guarantee that Vietoris-Rips filtrations of planar point clouds have trivial homology in degree 2 and greater (see for example the proof of Proposition 5.3 in \cite{chambers2010vietoris}). If it were shown that an analogous construction of $X$ for $d$ greater than 2 consists of point clouds with trivial persistence in every degree greater than zero for \v{C}ech and Vietoris-Rips filtrations (we suspect this to be the case), the above proof could be extended to the case $d>2$, and to the Vietoris-Rips setting.

\section{A Lower Bound}

\label{sec:lowerbound}

Now we see how the semialgebraic structure of level sets of the persistence map establishes lower bounds on the generic dimension of fibers of the persistence map for the \v{C}ech and Vietoris-Rips filtrations.

\dimlow*

We will prove this by showing that the same result holds on each $S_\preceq$.

\begin{lemma}
\label{lem:lowerbound}
Let $\PH$ denote either $\PHc$ or $\PHvr$. Given $\preceq \in \preord(K(n))$, there is a semialgebraic subset $A_\preceq\subseteq S_\preceq$ of dimension less than $nd$ with the following property. For each $P \in S_\preceq-A_\preceq$, if there are exactly $k$ distinct bounded interval endpoints in barcodes in $D = \PH(P)$, then $\dim\PH^{-1}(D)\geq nd - k + 1$. 
\end{lemma}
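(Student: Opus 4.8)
The plan is to reduce Theorem \ref{thm:lowerbound} to Lemma \ref{lem:lowerbound} and then prove the latter by a Jacobian-rank argument inside a single stratum $S_\preceq$. For the reduction: since $\PC = \bigcup_\preceq S_\preceq$ is a finite union, and since by Proposition \ref{prop:genposn} the set of point clouds not interior to any $S_\preceq$ is semialgebraic of dimension $<nd$, it suffices to establish the bound on $S_\preceq - A_\preceq$ for each $\preceq$, with $A_\preceq$ semialgebraic of dimension $<nd$; the union $\bigcup_\preceq (A_\preceq \cup \bd(S_\preceq))$ is then again a semialgebraic set of dimension $<nd$, so its complement in $\PC$ is generic in the sense of Definition \ref{def: generic property}, and the stated bound holds there.

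For Lemma \ref{lem:lowerbound} itself, the key point is Lemma \ref{lem:preorderfiber}: for $P$ interior to $S_\preceq$, the set of $Q \in S_\preceq$ with $\PH(Q) = \PH(P) = D$ is cut out, near $P$, exactly by the equations $\Phi_\sigma(Q) = \Phi_\sigma(P)$ for those $\sigma \in K(n)$ with $\Phi_\sigma(P)$ a bounded endpoint value of $D$. So locally the fiber is the zero set of the map $Q \mapsto (\Phi_\sigma(Q) - \Phi_\sigma(P))_{\sigma \in W}$, where $W = \{\sigma : \Phi_\sigma(P) \text{ is a bounded endpoint of } D\}$. The image of this map takes at most $k$ distinct values among its coordinates (one per distinct bounded endpoint), so the map factors through $\RR^k$, and consequently the rank of its Jacobian at $P$ is at most $k$. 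Since on the interior of $S_\preceq$ the map $\Phi$ is (a restriction of) a polynomial map in the VR case and a rational map, smooth near generic $P$, in the \v{C}ech case, for generic $P$ in $S_\preceq$ the Jacobian attains its maximal rank $\leq k$ on a neighborhood; the locus where the rank drops below the generic value is cut out by minor equations, hence semialgebraic, and we want to further argue that actually we can arrange the rank to be exactly its generic value $\leq k$. The local dimension of the fiber at such a generic $P$ is then $nd$ minus the Jacobian rank, which is $\geq nd - k$; we take $A_\preceq$ to be the union of $\bd(S_\preceq)$, the non-smooth locus of $\Phi$ (in the \v{C}ech case), and the rank-drop locus, all semialgebraic of dimension $<nd$ (the first two by Proposition \ref{prop:genposn}, Lemma \ref{lem:semialgenclosing} and standard facts, the third because rank is generically constant on an irreducible piece).

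The ``$+1$'' improvement over $nd - k$ comes from the fact that $0$ is always one of the bounded endpoints appearing in $D_0$ (by Lemma \ref{lem:basicH0}, every interval of $D_0$ has left endpoint $0$), yet no simplex $\sigma$ has $\Phi_\sigma(P) = 0$ for $P \in \PC$ — the vertices $\{i\}$ give $\Phi_{\{i\}}(P) = 0$, but these are never ``birth'' constraints that move: more carefully, one should observe that among the $k$ distinct bounded endpoints, the value $0$ contributes no nontrivial equation to the system cutting out the fiber (either because no $\sigma \in K(n)$ of positive dimension achieves it, or because the equation $\Phi_{\{i\}}(Q) = 0$ holds identically on $\PC$... actually it does not, $\Phi_{\{i\}} \equiv 0$), so the defining system involves at most $k-1$ genuinely constraining values, dropping the Jacobian rank bound to $k-1$ and hence the fiber dimension bound to $nd - (k-1) = nd - k + 1$. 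I would phrase this as: the image of $Q \mapsto (\Phi_\sigma(Q))_{\sigma \in W}$ lies in an affine subspace of $\RR^{|W|}$ of dimension at most $k-1$ passing through the point with all coordinates equal to $0$ removed appropriately — concretely, restrict $W$ to simplices $\sigma$ with $\dim \sigma \geq 1$, for which the only bounded endpoints achievable are the $k-1$ nonzero ones, since $\Phi_\sigma(P) = 0$ forces $\sigma$ to be a vertex for both filtrations.

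The main obstacle I expect is the last step: turning ``the Jacobian rank is at most $k-1$ generically'' into ``the local dimension of the fiber is at least $nd - k + 1$'' rigorously, and ensuring the exceptional set $A_\preceq$ genuinely has dimension $<nd$. The rank-at-most bound gives an \emph{upper} bound on the rank hence a \emph{lower} bound on the local fiber dimension via the constant-rank theorem (or the semialgebraic/algebraic fact that $\dim f^{-1}(f(x)) \geq \dim(\text{domain}) - \dim(\text{image})$ applied on the irreducible component through a generic $P$), but one has to be careful that $\Phi|_{S_\preceq}$ is smooth at generic $P$ (true away from a lower-dimensional set in the \v{C}ech case by Lemma \ref{lem:semialgenclosing} and rationality), that the generic point of each top-dimensional piece of $S_\preceq$ has all the claimed properties, and that $\bd(S_\preceq)$ together with these bad loci is swept into $A_\preceq$. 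Assembling these lower-dimensional exceptional sets and verifying each is semialgebraic of dimension $<nd$ is routine given the machinery in Section \ref{sec:semialgebraic}, but is where the bookkeeping lives.
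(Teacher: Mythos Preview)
Your overall architecture is right and your treatment of the ``$+1$'' is exactly the paper's: among the $k$ distinct bounded endpoints, the value $0$ is always present (Lemma \ref{lem:basicH0}) and is only achieved by singletons, for which $\Phi_{\{i\}}\equiv 0$, so it imposes no constraint. The paper also works one stratum $S_\preceq$ at a time and observes that the $\PH$-fiber inside $S_\preceq$ is controlled by the $k-1$ nonzero endpoint values.

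Where you and the paper diverge is in how that control is cashed out. You route through a Jacobian/constant-rank argument on the system $(\Phi_\sigma(Q)-\Phi_\sigma(P))_{\sigma\in W}$, then worry about smoothness (especially delicate for $\Phi^{\C}$), rank-drop loci, and how to convert a rank bound into a fiber-dimension bound. The paper sidesteps all of that: it picks \emph{one} simplex $\sigma_i$ per nonzero endpoint value $b_i$ ($i=1,\dots,k-1$), defines the semialgebraic map
\[
  f:S_\preceq\to\RR^{k-1},\qquad P\mapsto(\Phi_{\sigma_1}(P),\ldots,\Phi_{\sigma_{k-1}}(P)),
\]
and checks via Lemma \ref{lem:preorderfiber} that on $S_\preceq$ the $\PH$-fibers coincide with the $f$-fibers. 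Then it invokes the semialgebraic fiber-dimension formula (Lemma \ref{lem:fiberdim}): the set $f^{-1}(S_f(l))$ of points with $f$-fiber of dimension $l$ satisfies $\dim f^{-1}(S_f(l))=\dim S_f(l)+l\le (k-1)+l$, so taking $A_\preceq=\bigcup_{l<nd-k+1}f^{-1}(S_f(l))$ gives a semialgebraic set of dimension $<nd$, and for $P\notin A_\preceq$ one has $\dim\PH^{-1}(D)\ge\dim f^{-1}(f(P))\ge nd-k+1$.

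This buys two things over your route. First, no differentiability is ever needed, so the \v{C}ech case requires no separate treatment and there is no ``non-smooth locus'' to absorb. Second, the exceptional set $A_\preceq$ is produced directly with a dimension bound, rather than assembled from a list of bad loci whose dimensions each need separate justification. You actually mention this alternative yourself (``the semialgebraic/algebraic fact that $\dim f^{-1}(f(x))\ge\dim(\text{domain})-\dim(\text{image})$''); the paper's point is that this is the whole argument, and the Jacobian detour is unnecessary. A small additional simplification in the paper: picking one $\sigma_i$ per endpoint makes the target $\RR^{k-1}$ literally, avoiding the factoring step you describe.
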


\begin{proof}
If $\PH = \PHc$ let $\Phi = \Phi^{\C}$. Otherwise let $\Phi = \Phi^{\VR}$. We will make crucial use of the fact that for any $P\in \PC$, $\Phi_\sigma(P) = 0$ if and only if $\sigma$ is a singleton. Since for any $P, Q \in S_\preceq$, there exists a strictly increasing $\psi:\mathbb{R} \to \mathbb{R}$ such that $\psi \circ \Phi_\sigma(P) = \Phi_\sigma(Q)$ for all $\sigma \in K$, by \cite[Lemma 1.5]{leygonie2022fiber} $\PH(P)$ and $\PH(Q)$ have the same number of distinct bounded endpoints $k$. Hence on $S_\preceq$, $k$ is independent of $P$.

By Lemma \ref{lem:preorderfiber}, $S_\preceq$ is semialgebraic. Pick any $P_0\in S_\preceq$ and let $D = \PH(P_0)$. Let $0 = b_0 < \ldots < b_{k-1}$ be the distinct bounded endpoint values that appear in intervals of $D$.
Since the homology of either the \v{C}ech or Vietoris-Rips filtration changes at each $b_i$, the simplicial filtration itself must change at each $b_i$, so there must be at least one simplex $\sigma_i$ with $\Phi_{\sigma_i}(P_0) = b_i$ for each $i$. In particular, the simplex $\sigma_0$ must be a singleton. Define
\begin{align*}
    f: S_\preceq &\to \mathbb{R}^{k-1}\\
    P &\mapsto (\Phi_{\sigma_1}(P), \ldots, \Phi_{\sigma_{k-1}}(P)).
\end{align*}
Since the coordinate maps of $\Phi$ are semialgebraic functions, so is $f$, as $S_\preceq$ is semialgebraic. Given any $P, Q \in S_\preceq$, there exists a strictly increasing map $\psi:\mathbb{R} \to \mathbb{R}$ such that $\psi \circ \Phi_\sigma(P) = \Phi_\sigma(P_0)$ for all $\sigma \in K$. 
Hence, by Lemma \ref{lem:preorderfiber}, $\PH(P)$ = $\PH(Q)$ if and only if
\begin{equation*}
    \Phi_\sigma(P) = \psi^{-1} b_i \iff \Phi_\sigma(Q) = \psi^{-1} b_i.
\end{equation*}

From the structure of $S_\preceq$ we deduce that this happens if and only if $\Phi_{\sigma_i}(P) = \Phi_{\sigma_i}(Q)$ for all $i$. In particular we always have that $\Phi_{\sigma_0}(P) = \Phi_{\sigma_0}(Q) = 0$, since $\sigma_0$ is a singleton. Hence $\PH(P)$ = $\PH(Q)$ if and only if $f(P) = f(Q)$. Since we assumed $P,Q \subseteq S_\preceq$, if $D = \PH(P)$ and $a = f(P)$, then $f^{-1}(a) \subseteq \PH^{-1}(D)$. This is an inclusion and not an equality since $\PH^{-1}(D)$ is a subset of $\PC$ while $f^{-1}(a)$ is only a subset of $S_\preceq$.

Let $S_f(l) := \{a \in \mathbb{R}^{k-1}:\dim f^{-1}(a) = l\}$. For $l < nd - k + 1$ we have by Lemma \ref{lem:fiberdim} in the appendix that $S_f(l)$ and $f^{-1}(S_f(l))$ are semialgebraic and
\begin{align*}
    \dim f^{-1}(S_f(l)) = \dim S_f(l) + l \leq k-1 + l < (k - 1) + (nd - k +1) = nd.
\end{align*}
Thus we define
\begin{equation*}
    A_\preceq = \bigcup_{l < nd - k + 1} f^{-1}(S_f(l)).
\end{equation*}
This set is semialgebraic and has dimension less than $nd$, being a finite union of semialgebraic sets of dimension less than $nd$. Moreover, for $P \in S_\preceq - A_\preceq$, 
\begin{equation*}
     \dim \PH^{-1}(\PH(P))  \geq \dim f^{-1}(f(P)) \geq nd -k + 1.
\end{equation*}
This proves the lemma.
\end{proof}

With this lemma we can prove the theorem easily.

\begin{proof}[Proof of Theorem \ref{thm:lowerbound}]
    For each $\preceq \in \preord(K(n))$, the result follows immediately from taking $A_\preceq$ as in the above lemma and letting
    \begin{equation*}
        A  = \bigcup_{\preceq \in \preord(K(n))} A_\preceq.
    \end{equation*}
    Then every point cloud in $\PC - A$ has the desired property and $\dim A < nd$.
\end{proof}

\section{Vietoris-Rips Persistence Meets Rigidity Theory}
\label{sec:vrrigid}
In this section we only consider Vietoris-Rips persistence and write $\PH(P) := \PHvr(P)$ as well as $\Phi:= \Phi^{\VR}$. Here we will look for point clouds that the persistence map describes in a sense as well as possible.

It is well known that $\PH$ is isometry invariant (see \cite{oudot2020inverse} for example). Therefore, if we have a point cloud $P$ and want that $D=\PH(P)$ differentiates $P$ from other point clouds $Q$, a best case scenario would be that any other point cloud $Q$ with $\PH(Q) = D$ is in fact isometric to $P$. Barring this, a second best case scenario would be that this is true for all $Q$ in a neighborhood of $P$. So we have the following definition.

\begin{definition}
    Let $P\in\PC$ satisfy $\PH(P) = D$. We say that $P$ is \emph{identifiable up to isometry (under Vietoris-Rips persistence)} if for all $Q\in \PH^{-1}(D)$, $P$ and $Q$ are isometric.

    We say $P$ is \emph{locally identifiable up to isometry (under Vietoris-Rips persistence)} if there exists a neighborhood $U$ of $P$ in $\PC$ such that every point cloud $Q \in U\cap\PH^{-1}(D)$ is isometric to $P$. 
\end{definition}

We illustrate our notion of local identifiability in Figure \ref{fig:localid}.

\begin{figure}
    \centering
    \includegraphics[width=0.45\textwidth]{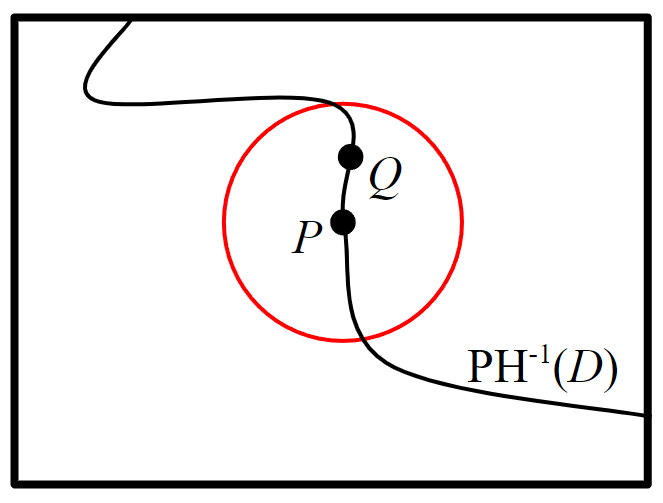}
    \caption{A point cloud $P$ in the fiber of the full barcode $D$, with a neighborhood highlighted. The point cloud $P$ is locally identifiable if every $Q$ inside the fiber of $D$ and the neighborhood is isometric to $P$.}
    \label{fig:localid}
\end{figure}

Now we begin to characterize the point clouds that are identifiable up to isometry. It will be convenient to first study point clouds that are locally identifiable up to isometry. The key idea is that Vietoris-Rips persistence only records metric information about pairwise distances between points in a point cloud $P$. We need a notion of the pairs of points whose distances are indeed recorded by the Vietoris-Rips filtration.

\begin{definition}
    Let $P= (p_1,\ldots,p_n)\in \PC$ with $D = \PH(P)$. For $i\neq j$, we say that $\{i,j\}$ is a \emph{critical edge} if $D$ has $\frac{1}{2}d(p_i,p_j)$ as a bounded endpoint. We denote the set of critical edges of $P$ by $\crit(P)$.
\end{definition}

Near a generic point cloud $P$, another point cloud $Q$ has the same full barcode if and only if they have the same lengths of edges determined by $\crit(P)$. To show this we will make use of the following lemma.

\begin{lemma}
\label{lem:critsame}
    If $P,Q\in S_\preceq$ then $\crit(P) = \crit(Q)$.
\end{lemma}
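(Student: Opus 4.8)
The plan is to exploit the key structural fact established in Lemma~\ref{lem:preorderfiber}: on a fixed cell $S_\preceq$, the simplices of $K(n)$ always appear in the same order, and two point clouds $P,Q \in S_\preceq$ have $\PH(P) = \PH(Q)$ if and only if, for every bounded endpoint value $b$ of the common barcode and every simplex $\sigma$, we have $\Phi_\sigma(P) = b \iff \Phi_\sigma(Q) = b$. Since $\crit(P)$ is defined purely in terms of which \emph{edges} $\{i,j\}$ have $\frac12 d(p_i,p_j) = \Phi_{\{i,j\}}(P)$ equal to a bounded endpoint of $D = \PH(P)$, the statement should follow once we know that $\PH(P)$ and $\PH(Q)$ are the same barcode and that the edges realizing bounded endpoints are the same.

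First I would fix $P, Q \in S_\preceq$. By Lemma~\ref{lem:preorderfiber} (specifically the observation in its proof), there is a strictly increasing $\psi \colon \mathbb{R}\to\mathbb{R}$ with $\psi \circ \Phi_\sigma(P) = \Phi_\sigma(Q)$ for all $\sigma \in K(n)$; this $\psi$ in particular fixes $0$ and maps bounded endpoint values of $\PH(P)$ to bounded endpoint values of $\PH(Q)$, with $\PH(Q) = \psi(\PH(P))$ as a barcode (applying $\psi$ coordinatewise to all endpoints). Now suppose $\{i,j\} \in \crit(P)$, i.e.\ $\frac12 d(p_i,p_j) = \Phi_{\{i,j\}}(P) = b$ for some bounded endpoint $b$ of $\PH(P)$. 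Then $\Phi_{\{i,j\}}(Q) = \psi(b)$, and $\psi(b)$ is a bounded endpoint of $\PH(Q) = \psi(\PH(P))$ because $\psi$ is a strictly increasing bijection on endpoints and carries bounded intervals to bounded intervals (it cannot send a finite number to $\pm\infty$). Hence $\frac12 d(q_i,q_j) = \Phi_{\{i,j\}}(Q)$ is a bounded endpoint of $\PH(Q)$, so $\{i,j\} \in \crit(Q)$. This shows $\crit(P) \subseteq \crit(Q)$, and the reverse inclusion follows by the symmetric argument using $\psi^{-1}$ (which is also strictly increasing and satisfies $\psi^{-1}\circ\Phi_\sigma(Q) = \Phi_\sigma(P)$, since $P,Q$ play symmetric roles in $S_\preceq$).

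I do not expect a serious obstacle here; the lemma is essentially a direct corollary of Lemma~\ref{lem:preorderfiber} together with the fact that $\Phi_{\{i,j\}}$ records exactly $\frac12 d(p_i,p_j)$. The one point requiring a little care is making sure that the strictly increasing reparametrization $\psi$ really does match up \emph{bounded} endpoints with \emph{bounded} endpoints and not accidentally with $\infty$; this is immediate since $\psi$ is a function $\mathbb{R}\to\mathbb{R}$, so finite values stay finite, and by Lemma~\ref{lem:basicH0} the only unbounded interval in $D_0$ has right endpoint $\infty$ which is never in the image of $\psi$ and never equals any $\Phi_\sigma$ value. An alternative, even more economical phrasing: simply note that $\crit(P)$ is determined by the set $W \subseteq K(n)$ of simplices $\sigma$ with $\Phi_\sigma(P)$ a bounded endpoint of $\PH(P)$ (intersected with the edges), and Lemma~\ref{lem:preorderfiber} says precisely that this set $W$ is the same for all point clouds in $S_\preceq$ that share a barcode — but since \emph{every} pair in $S_\preceq$ is linked by such a $\psi$, the set $W$ is constant on all of $S_\preceq$, and restricting $W$ to edges gives $\crit$.
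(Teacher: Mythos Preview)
Your proof is correct and follows essentially the same approach as the paper: pick the strictly increasing reparametrization $\psi$ guaranteed on $S_\preceq$, observe that $\Phi_{\{i,j\}}(Q)=\psi(b)$ whenever $\Phi_{\{i,j\}}(P)=b$, and use that $\psi$ carries bounded barcode endpoints of $\PH(P)$ to bounded barcode endpoints of $\PH(Q)$. The only cosmetic difference is that the paper invokes \cite[Lemma~1.5]{leygonie2022fiber} for this last fact, whereas you argue it directly from $\psi$ being a strictly increasing map $\mathbb{R}\to\mathbb{R}$.
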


\begin{proof}
    Since $P,Q\in S_\preceq$ we may pick a strictly increasing map $\psi:\mathbb{R} \to \mathbb{R}$ such that $\psi \circ \Phi_\sigma(P) = \Phi_\sigma(Q)$ for each simplex $\sigma \in K$.  If $\{i,j\} \in \crit(P)$, then $b = \frac{1}{2}d(p_i,p_j)$ is a bounded endpoint of $D$. By \cite[Lemma 1.5]{leygonie2022fiber}, $\psi(b)$ is a bounded endpoint of $\PH(Q)$. Moreover $\Phi_{\{i,j\}}(Q) = \psi \circ \Phi_{\{i,j\}}(P) = \psi(b)$. Hence $\{i,j\} \in \crit(Q)$. So $\crit(P) \subseteq \crit(Q)$. Similarly $\crit(Q) \subseteq \crit(P)$.
\end{proof}

\begin{proposition}
    \label{prop:critdists}
    Let $P=(p_1,\ldots,p_n)\in \PC$ be interior to $S_\preceq$. There is a neighborhood $U$ of $P$ such that for all $Q=(q_1,\ldots,q_n) \in U$, $\PH(P) = \PH(Q)$ if and only if for all $\{i,j\} \in \crit(P)$, $d(p_i,p_j) = d(q_i,q_j)$.
\end{proposition}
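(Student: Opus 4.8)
The plan is to choose $U$ to be an open ball around $P$ contained in $S_\preceq$, which exists precisely because $P$ is interior to $S_\preceq$; then every $Q \in U$ lies in the same cell $S_\preceq$ as $P$, so by Lemma \ref{lem:preorderfiber} there is a strictly increasing $\psi : \mathbb{R} \to \mathbb{R}$ with $\psi \circ \Phi_\sigma(P) = \Phi_\sigma(Q)$ for every $\sigma \in K(n)$. Both implications will be extracted from this $\psi$ together with the defining feature of the Vietoris--Rips case, namely $\Phi_\sigma(R) = \max_{i,j \in \sigma} \tfrac12 d(r_i,r_j)$: for a non-singleton $\sigma$ this maximum is realized by an honest edge $\{k,l\} \subseteq \sigma$, and $\Phi_{\{i,j\}}(R) = \tfrac12 d(r_i,r_j)$ on edges.

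For the forward implication, suppose $\PH(Q) = D := \PH(P)$ and let $\{i,j\} \in \crit(P)$. Then $b := \tfrac12 d(p_i,p_j) = \Phi_{\{i,j\}}(P)$ is a bounded endpoint of $D$, so Lemma \ref{lem:preorderfiber}, applied with the simplex $\{i,j\}$ and this endpoint $b$, forces $\Phi_{\{i,j\}}(Q) = b$ as well, i.e. $d(p_i,p_j) = d(q_i,q_j)$. This direction is essentially immediate.

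The converse is where the Vietoris--Rips structure does the work. Assume $d(p_i,p_j) = d(q_i,q_j)$ for every $\{i,j\} \in \crit(P)$; by Lemma \ref{lem:preorderfiber} it suffices to check that $\Phi_\sigma(P) = b \iff \Phi_\sigma(Q) = b$ for all $\sigma \in K(n)$ and all bounded endpoints $b$ of $D$. The key observation is that, since $P,Q \in S_\preceq$ induce the same total preorder on $K(n)$, the edges contained in a fixed simplex $\sigma$ are ordered the same way by $\Phi(P)$ and by $\Phi(Q)$; hence any edge $e = \{k,l\} \subseteq \sigma$ maximizing $\Phi_{(\cdot)}(P)$ over edges of $\sigma$ also maximizes $\Phi_{(\cdot)}(Q)$, so that $\Phi_\sigma(P) = \tfrac12 d(p_k,p_l)$ and $\Phi_\sigma(Q) = \tfrac12 d(q_k,q_l)$ simultaneously (the singleton case is trivial, both values being $0$, which by Lemma \ref{lem:basicH0} is itself a bounded left endpoint of $D$). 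Now if $\Phi_\sigma(P) = b$, then $\tfrac12 d(p_k,p_l) = b$ is a bounded endpoint of $D$, so $\{k,l\} \in \crit(P)$, whence $d(q_k,q_l) = d(p_k,p_l)$ by hypothesis and $\Phi_\sigma(Q) = b$ too. Applying this to a simplex entering the filtration exactly at $b$ --- one exists since the homology, and hence the complex, changes at every bounded endpoint of $D$, exactly as in the proof of Lemma \ref{lem:lowerbound} --- yields $\psi(b) = b$ for every bounded endpoint $b$ of $D$. The reverse implication then follows: if $\Phi_\sigma(Q) = b$ then $\psi(\Phi_\sigma(P)) = b = \psi(b)$, so $\Phi_\sigma(P) = b$ by injectivity of $\psi$. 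With the criterion verified, Lemma \ref{lem:preorderfiber} gives $\PH(Q) = D$.

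I expect the delicate point to be exactly this last step of the converse: one must rule out $\Phi_\sigma(Q)$ landing on a bounded endpoint $b$ of $D$ while $\Phi_\sigma(P)$ does not. Invoking injectivity of $\psi$ directly would be circular, since ``$\psi$ fixes the bounded endpoints of $D$'' is equivalent to ``$\PH(P) = \PH(Q)$''; the fix is to first prove the easy implication of the criterion for all simplices and deduce that $\psi$ fixes every bounded endpoint (using that each such endpoint is the $\Phi$-value of some simplex), and only then use injectivity. The remaining steps are routine bookkeeping about the preorder defining $S_\preceq$ and the maximum in the Vietoris--Rips weight.
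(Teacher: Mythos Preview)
Your proof is correct and follows the same overall architecture as the paper's: take $U$ to be the interior of $S_\preceq$, dispatch the forward direction immediately via Lemma~\ref{lem:preorderfiber}, and for the converse reduce to the biconditional $\Phi_\sigma(P)=b \iff \Phi_\sigma(Q)=b$ using that the same edge $\{k,l\}\subseteq\sigma$ realizes the Vietoris--Rips maximum for both $P$ and $Q$.

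The one genuine point of divergence is in how you handle the ``$Q$ to $P$'' half of that biconditional. The paper argues directly: if $\Phi_\sigma(Q)=b$ with maximizing edge $\{i_0,j_0\}$, then $\{i_0,j_0\}\in\crit(Q)$, and Lemma~\ref{lem:critsame} gives $\{i_0,j_0\}\in\crit(P)$, whence the hypothesis forces $\Phi_\sigma(P)=b$. You instead first establish the ``$P$ to $Q$'' half, use it on a simplex entering at each bounded endpoint to conclude $\psi(b)=b$ for all such $b$, and then read off ``$Q$ to $P$'' from injectivity of $\psi$. Your route bypasses Lemma~\ref{lem:critsame} entirely at the cost of introducing $\psi$ explicitly; the paper's route is more symmetric but needs the auxiliary lemma. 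Both are short and legitimate. One small remark: the existence of the strictly increasing $\psi$ you invoke is established in the \emph{proof} of Lemma~\ref{lem:preorderfiber} rather than its statement, so the citation is slightly loose, though the fact is used freely elsewhere in the paper.
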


\begin{proof}
Set $U$ to be the interior of $S_\preceq$. It follows that $U$ contains $P$. Let $D = \PH(P)$.

($\implies$) Suppose $Q = (q_1,\ldots,q_n)\in U$ satisfies $\PH(Q) = D$. If $\{i,j\}\in \crit(P)$, then $\frac{1}{2} d(p_i,p_j) = b$, for some bounded endpoint $b$ of $D$.

By Lemma \ref{lem:preorderfiber} we have
\begin{equation*}
    d(p_i,p_j) = 2\Phi_{\{ i,j \}}(P) = 2\Phi_{\{ i,j \}}(Q) = d(q_i,q_j).
\end{equation*}

($\impliedby$) Suppose $Q=(q_1,\ldots,q_n)\in U$ has that for all $\{i,j\}\in \crit(P)$, $d(p_i,p_j) = d(q_i,q_j)$. Let $b$ be any bounded endpoint of $D$, and $\sigma$ be a simplex such that $\Phi_\sigma(Q) = b$. The fact that $P,Q \in S_\preceq$ implies that the maxima defining $\Phi_\sigma(P)$ and $\Phi_\sigma(Q)$ are both attained by the same pair of indices $\{i_0,j_0\}$. Since we have $b = \Phi_\sigma(Q) = \frac{1}{2}d(q_{i_0},q_{j_0})$, we observe $\{i_0,j_0\}\in \crit(Q)$. By Lemma \ref{lem:critsame}, we therefore have $\{i_0,j_0\}\in \crit(P)$. Thus,
\begin{equation*}
    \Phi_\sigma(P) = \frac{1}{2}d(p_{i_0},p_{j_0}) = \frac{1}{2}d(q_{i_0},q_{j_0}) = \Phi_\sigma(Q) = b.
\end{equation*}
The same argument (except that we do not need to invoke Lemma \ref{lem:critsame}) shows that if $b$ is a bounded endpoint of $D$ and $\sigma$ is a simplex such that $\Phi_\sigma(P) = b$, then $\Phi_\sigma(Q) = b$.

Therefore, for all bounded endpoints $b$ of $D$ and $\sigma \in K$, $\Phi_\sigma(P)$, we have that $\Phi_\sigma(P) = b$ if and only if $\Phi_\sigma(Q) = b$. Lemma \ref{lem:preorderfiber} implies that $\PH(P) = \PH(Q)$.
\end{proof}

We now define the critical graph of a point cloud in the obvious way.

\begin{definition}
    Given $P \in \PC$, we define the \emph{critical graph} of $P$ to be the graph $G_P$ with $N(G_P) = [n]$ and $E(G_P) = \crit(P)$.
\end{definition}

We immediately have the following result from Proposition $\ref{prop:critdists}$.

\begin{restatable}{theorem}{vrglr}
\label{thm:vrrigid}
    Let $P\in\PC$ be a generic point cloud. $P$ is locally identifiable up to isometry under Vietoris-Rips persistence if and only if $(G_P, P)$ is rigid.
\end{restatable}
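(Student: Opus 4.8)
The plan is to deduce the theorem directly from Proposition \ref{prop:critdists}, using Proposition \ref{prop:genposn} and Lemma \ref{lem:isomeansconj} to dispatch the genericity hypotheses. First I would fix a generic $P\in\PC$ and, invoking Proposition \ref{prop:genposn}, arrange that $P$ is interior to some cell $S_\preceq$ (this excludes only a semialgebraic set of dimension $<nd$, hence is a generic condition); I would also discard the locus where two pairwise distances coincide, so that Lemma \ref{lem:isomeansconj} is available at $P$. Writing $D=\PH(P)$ and letting $G_P$ be the critical graph, the squared-edge-length map $f_{G_P}$ records exactly the distances $d(p_i,p_j)$ with $\{i,j\}\in\crit(P)$, so Proposition \ref{prop:critdists} supplies a neighborhood $U$ of $P$ — namely the interior of $S_\preceq$ — on which $\PH(Q)=D$ holds precisely when $(G_P,Q)$ is equivalent to $(G_P,P)$ in the sense of Definition \ref{def: rigidity}.

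Next I would run the two implications. For ``rigid $\Rightarrow$ locally identifiable'': let $U'\ni P$ witness rigidity of $(G_P,P)$; then on $U\cap U'$ every $Q$ with $\PH(Q)=D$ is equivalent to $(G_P,P)$, hence congruent to $P$, hence isometric to $P$, so $U\cap U'$ witnesses local identifiability. For ``locally identifiable $\Rightarrow$ rigid'': let $V\ni P$ witness local identifiability; then on $W:=U\cap V$ every $Q$ with $(G_P,Q)$ equivalent to $(G_P,P)$ satisfies $\PH(Q)=D$, hence is isometric to $P$; since $Q\in W\subseteq S_\preceq$ and $P$ is generic, Lemma \ref{lem:isomeansconj} promotes this isometry to a congruence, so $W$ witnesses rigidity of $(G_P,P)$.

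I do not expect a genuine obstacle here: essentially all the content is carried by the three cited results. The only points needing care are bookkeeping — every neighborhood in sight must be intersected with the interior of $S_\preceq$ so that Proposition \ref{prop:critdists} applies — and being explicit that genericity enters twice: once through Proposition \ref{prop:genposn}, to place $P$ in the interior of a cell so that the ``$\PH(Q)=D \iff$ equivalent framework'' dictionary is valid in a full neighborhood, and once through Lemma \ref{lem:isomeansconj}, to bridge the gap between ``isometric'' (what local identifiability produces) and ``congruent'' (what local rigidity is phrased in terms of). Off the generic locus this bridge can fail, since labelled point clouds can be isometric without being congruent, which is exactly why the genericity hypothesis appears in the statement.
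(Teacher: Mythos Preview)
Your proposal is correct and follows essentially the same approach as the paper: both arguments use Proposition~\ref{prop:genposn} to place $P$ in the interior of some $S_\preceq$, then invoke Proposition~\ref{prop:critdists} to translate ``same full barcode'' into ``same critical edge lengths'' on that neighborhood, and finally use Lemma~\ref{lem:isomeansconj} to upgrade isometry to congruence in the converse direction. Your bookkeeping of the intersected neighborhoods and your explicit accounting of where genericity is used match the paper's proof in all essential respects.
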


\begin{proof}
    Let $P = (p_1,\ldots,p_n)$. By genericity of $P$ we may assume $P$ is to some $S_\preceq$, using Proposition \ref{prop:genposn}. 
    
    ($\impliedby$) Suppose $(G_P,P)$ is rigid. Let $U$ be the intersection of neighborhoods of $P$ given by Proposition \ref{prop:critdists} and by the rigidity of $(G_P,P)$. Let $Q = (q_1,\ldots,q_n)$ be another element of $U$ satisfying $\PH(P) = \PH(Q)$. Proposition \ref{prop:critdists} implies that $Q$ satisfies $d(p_i,p_j) = d(q_i,q_j)$ for all $\{i,j\}\in \crit(P)$. The rigidity of $(G_P,P)$ implies that $Q$ is congruent to $P$, and hence isometric to $P$. So $P$ is locally identifiable up to isometry. 
    
    ($\implies$) Suppose $P$ is locally identifiable up to isometry. Let $U$ be the intersection of the neighborhoods of $P$ given by Proposition \ref{prop:critdists} and the local identifiablility of $P$ up to isometry. Let $Q = (q_1,\ldots,q_n)$ be another element of $U$, such that $d(p_i,p_j) = d(q_i,q_j)$ for all $\{i,j\} \in \crit(P)$. Proposition \ref{prop:critdists} implies that $\PH(P) = \PH(Q)$. Local identifiability of $P$ implies that $P$ and $Q$ are isometric. By genericity of $P$ and Lemma \ref{lem:isomeansconj}, $P$ and $Q$ are congruent.
\end{proof}

Below we present a couple basic examples of the theorem in action.

\begin{example}
    Let $P = \{p_1,p_2,p_3\}$ be a point cloud of three points. Without loss of generality, assume that $d(p_1,p_2)$ is the largest of the pairwise distances. By assuming genericity of $P$, we may take $d(p_1,p_2)$ to be the strictly largest of the pairwise distances. It is then straightforward to check that $\{2,3\}$ and $\{1,3\}$ are critical edges of $P$, corresponding to the two bounded right endpoints of $\PH_0(P)$. However, $\{1,2\}$ is not a critical edge. This is because the inclusion of this edge cannot change degree zero homology as the filtration of $K(3)$ is already connected for values below $d(p_1,p_2)$. The only other possibility is that $\{1,2\}$ changes homology by creating a cycle. However this also cannot happen as $\{1,2\}$ appears at the same time in the Vietoris-Rips filtration as the 2-simplex $\{1,2,3\}$. Hence, $G_P$ is the graph on $[n]$ with edges $\{1,3\}$ and $\{2,3\}$, and this graph is not rigid. Hence we deduce that generic point clouds with three points are not locally identifiable.
\end{example}

\begin{example}
    \label{ex:vrexample}
    Let $P = (p_1,p_2,p_3,p_4)$ be a planar point cloud given by the vertices of a square in clockwise order, perturbed slightly so that $P$ is generic. For convenience we write $d_1:=d(p_1,p_2)$, $d_2:= d(p_2,p_3)$, $d_3:=d(p_3,p_4)$, $d_4:=d(p_4,p_1)$ and $d_1':=d(p_1,p_3)$, $d_2':=d(p_2,p_4)$. We show a possible filtration of such a point cloud in Figure \ref{fig:vrexample}.

    \begin{figure}
    \centering
    \includegraphics[width=0.8\textwidth]{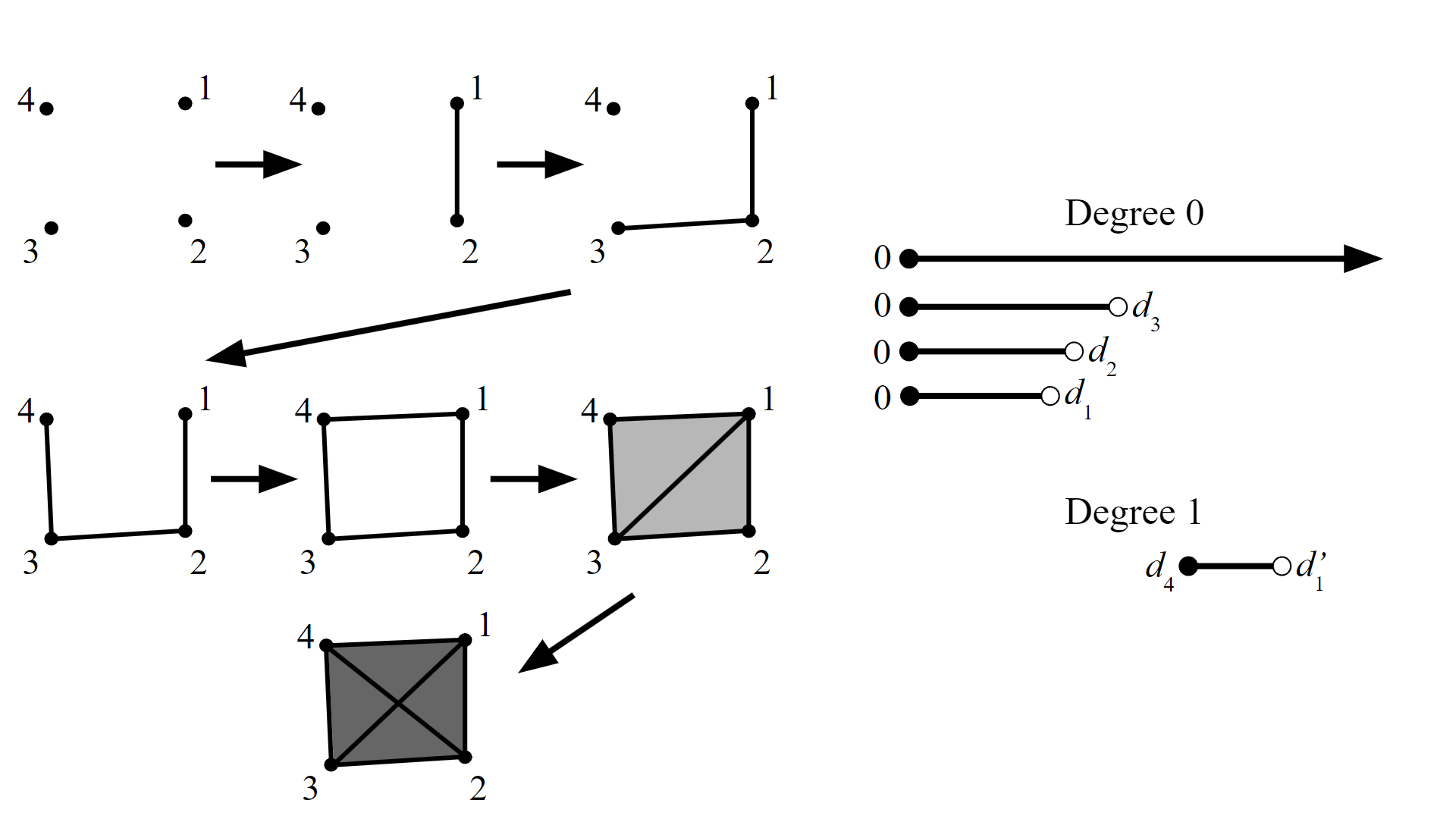}
    \caption{Left: A possible filtration given by a point cloud $P$ as in Example \ref{ex:vrexample}. Right: The full barcode arising from this filtration.}
    \label{fig:vrexample}
\end{figure}
    
    Assuming the perturbation is sufficiently small we have that $d_1$, $d_2$, $d_3$, and $d_4$ are less than both $d_1'$ and $d_2'$. As such we have that the smallest three of $d_1$ through $d_4$ determine the three bounded right endpoints of $\PH_0(P)$, whereas the inclusion of the edge corresponding to the largest of $d_1$ through $d_4$ creates a cycle, and hence determines a left endpoint in $\PH_1(P)$. As such, the edges $\{1,2\}$, $\{2,3\}$, $\{3,4\}$, and $\{1,4\}$ are all critical. Without loss of generality, suppose that $d_1' \leq d_2'$. Then $\{1,2,3\}$ and $\{1,3,4\}$ both have filtration value $d_1'$ since the simplex $\{1,3\}$ appears at this filtration value, and all other subsimplices of these 2-simplices appear earlier. In particular, the cycle given by the first four edges becomes a boundary at filtration value $d_1'$. We deduce that $\{1,3\}$ is a critical edge. The point cloud $P$ equipped with the graph $G$ on $[n]$ with edges $\{1,2\}$, $\{2,3\}$, $\{3,4\}$, $\{1,4\}$, and $\{1,3\}$ is known to be rigid. We deduce that $P$ is locally identifiable. 
\end{example}

As a consequence of Theorem \ref{thm:vrrigid}, when $d=2$ we have the following characterization of generic point clouds that are locally identifiable up to isometry. 

\begin{corollary}
    \label{cor:matrank}
    Any generic $P \in \PCtwo$ is locally identifiable up to isometry if and only if $G_P$ contains a spanning subgraph $G$ satisfying
    \begin{enumerate}
        \item $|E(G)| = 2n -3$, and
        \item if $X$ is a subset of $[n]$, the number of edges of $G$ whose endpoints are both in $X$ is less than or equal to $2|X|-3$.
    \end{enumerate}
\end{corollary}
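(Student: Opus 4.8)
The plan is to combine our local identifiability criterion, Theorem~\ref{thm:vrrigid}, with the Pollaczek-Geiringer/Laman characterization of generically rigid graphs in the plane, Theorem~\ref{thm:dim2rigid}. By Theorem~\ref{thm:vrrigid} there is a semialgebraic set $A_0\subseteq\PCtwo$ of dimension less than $2n$ such that every $P\notin A_0$ is locally identifiable up to isometry under Vietoris--Rips persistence if and only if the framework $(G_P,P)$ is rigid. Conditions (1) and (2) in the statement are precisely the conditions appearing in Theorem~\ref{thm:dim2rigid} for $G_P$ to contain a spanning subgraph witnessing that $G_P$ is GLR in dimension $2$. So it suffices to show that, after enlarging the exceptional set to a semialgebraic set of dimension less than $2n$, the framework $(G_P,P)$ is rigid if and only if the graph $G_P$ is GLR in dimension $2$.

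The key point, and the reason this reduction goes through despite $G_P$ varying with $P$, is that $G_P$ is constant on each cell $S_\preceq$. Indeed, Lemma~\ref{lem:critsame} gives $\crit(P)=\crit(Q)$ whenever $P,Q\in S_\preceq$, so the restriction of $P\mapsto G_P$ to $S_\preceq$ is a single fixed graph $G_\preceq$. By Proposition~\ref{prop:genposn} the set $N$ of point clouds not interior to any $S_\preceq$ is semialgebraic of dimension less than $2n$, so we may add $N$ to the exceptional set and assume $P$ lies in $\mathrm{int}(S_\preceq)$ for some $\preceq$. Now $\mathrm{int}(S_\preceq)$ is a nonempty open subset of $\RR^{2n}$, and by Theorem~\ref{thm: generic rigidity} together with Remark~\ref{rem: kinds of genericity} (which reconciles the rigidity-theoretic notion of genericity with Definition~\ref{def: generic property}), there is a proper algebraic subset $B_\preceq\subseteq\RR^{2n}$ so that for every $P\in\mathrm{int}(S_\preceq)\setminus B_\preceq$ the framework $(G_\preceq,P)$ is rigid exactly when $G_\preceq$ is GLR in dimension $2$. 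Since $K(n)$ has finitely many total preorders, there are finitely many cells $S_\preceq$, so the set $A:=A_0\cup N\cup\bigcup_\preceq\big(\mathrm{int}(S_\preceq)\cap B_\preceq\big)$ is semialgebraic of dimension less than $2n$; as noted in Section~\ref{sec:algebra}, this means that any property holding on $\PCtwo\setminus A$ holds for generic $P$.

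It remains to assemble the equivalences for generic $P$, i.e.\ for $P\in\PCtwo\setminus A$. Such a $P$ lies in $\mathrm{int}(S_\preceq)$ for some $\preceq$, so $G_P=G_\preceq$; then $(G_P,P)$ is rigid if and only if $G_P$ is GLR in dimension $2$ (by the previous paragraph), and $P$ is locally identifiable up to isometry if and only if $(G_P,P)$ is rigid (by Theorem~\ref{thm:vrrigid}). Finally, applying the $d=2$ part of Theorem~\ref{thm:dim2rigid} to the graph $G_P$ shows that $G_P$ is GLR in dimension $2$ if and only if $G_P$ contains a spanning subgraph $G$ satisfying (1) and (2), which is exactly the asserted characterization.

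The step I expect to be the main obstacle is the middle one: matching the paper's algebraic genericity (a proper algebraic subset of $\RR^{nd}$) with the classical statement that rigidity is a generic property of a \emph{graph}, when the graph $G_P$ under consideration itself depends on the configuration $P$. The resolution is to work cell by cell: on each of the finitely many full-dimensional cells $S_\preceq$ the critical graph is constant (Lemma~\ref{lem:critsame}, Proposition~\ref{prop:genposn}), so inside each cell we are genuinely in the classical setting of a fixed graph with a generic configuration, and Theorem~\ref{thm: generic rigidity} applies directly. Everything else is a bookkeeping combination of Theorems~\ref{thm:vrrigid}, \ref{thm: generic rigidity}, and \ref{thm:dim2rigid}.
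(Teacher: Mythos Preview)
Your proof is correct and follows the same route as the paper, which dispatches the corollary in one line by citing Theorem~\ref{thm:vrrigid} and the first part of Theorem~\ref{thm:dim2rigid}. The extra work you do---bridging ``$(G_P,P)$ is rigid'' with ``$G_P$ is GLR'' when $G_P$ depends on $P$---is a legitimate point the paper leaves implicit; your cell-by-cell argument via Lemma~\ref{lem:critsame} and Proposition~\ref{prop:genposn} handles it correctly, though a shorter route is simply to observe that there are only finitely many graphs on $[n]$, so the union over all such $G$ of the exceptional sets from Theorem~\ref{thm: generic rigidity} is still a proper algebraic subset, and outside it $(G,P)$ is rigid iff $G$ is GLR for \emph{every} $G$, in particular for $G=G_P$.
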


\begin{proof}
This follows immediately from Theorem \ref{thm:vrrigid} the first part of Theorem \ref{thm:dim2rigid}.
\end{proof}

Using Theorem \ref{thm: gugr} we can also find a criterion for identifiability up to isometry.

\vrggr*

\begin{proof}
Given any full barcode $D$, let $E(D)$ denote the set of nonzero, non-infinite endpoints appearing in $D$. Note that $E(D)$ is a set, and so has no repeated entries. We have a map
\begin{align*}
    \phi_P:\crit(P) &\to E(\PH(P))\\
    \{i,j\}&\mapsto \frac{1}{2}d(p_i,p_j).
\end{align*}
The structure of the barcode $D$ implies that this map is surjective for any $P$.
The condition that for all pairs of critical edges $\{i,j\} \neq \{k,l\}$, $d(p_i,p_j)\neq d(p_{k},p_{l})$ is generically satisfied. By imposing this condition on $P$ we see that $\phi_P$ is also injective, and hence bijective.

Now fix $D = \PH(P)$, and suppose $Q \in \PH^{-1}(D)$. As a result $\phi_{Q}$ is surjective on $E(D)$. Therefore we can choose a subgraph $G'_{Q} \subseteq G_{Q}$ with the same multiset of edge lengths as $G_P$. It follows from Theorem \ref{thm: gugr} that $Q$ and $P$ are isometric. Therefore $P$ is identifiable up to isometry.
\end{proof}

As a consequence of Theorem \ref{thm:globrig} we have the a generic criterion for identifiability up to isometry when $d = 2$.

\begin{corollary}
    Let $n \geq 4$ and $P\in \PCtwo$ be a generic point cloud. If
    \begin{enumerate}
        \item $G_P$ is 3-connected, and
        \item $G_P$ is redundantly rigid in dimension 2,
    \end{enumerate}
    then $P$ is identifiable up to isometry.
\end{corollary}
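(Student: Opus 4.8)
The plan is to deduce this directly from Theorem \ref{thm:globrig} together with the combinatorial characterization of generically globally rigid graphs in the plane in Theorem \ref{thm:dim2rigid}. The only genuine subtlety is that the invariant $G_P$ varies with $P$, so the genericity must be arranged uniformly over the finitely many graphs on $[n]$ before invoking Theorem \ref{thm:globrig}.

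First I would let $\mathcal{G}$ denote the (finite) set of simple graphs on vertex set $[n]$ that are GGR in dimension $2$. By Definition \ref{def: graph rigidity} and Theorem \ref{thm: generic rigidity}, for each $G \in \mathcal{G}$ there is a proper algebraic subset $A_G \subset \mathbb{R}^{2n}$ such that every framework $(G,Q)$ with $Q \in \PCtwo - A_G$ is globally rigid. Set $A' := \bigcup_{G \in \mathcal{G}} A_G$, which is again a proper algebraic subset of $\mathbb{R}^{2n}$, being a finite union of such sets. Let $A''$ be the proper algebraic exceptional set supplied by Theorem \ref{thm:globrig}, and put $A := A' \cup A''$, still a proper algebraic subset.

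Now suppose $P \in \PCtwo - A$ satisfies the two hypotheses. Since $n \geq 4$, a graph on $[n]$ that is $3$-connected and redundantly rigid in dimension $2$ is GGR in dimension $2$ by the second part of Theorem \ref{thm:dim2rigid} (the ``$G$ is complete'' clause is only needed when the graph has at most $3$ vertices); hence $G_P \in \mathcal{G}$. Because $P \notin A' \supseteq A_{G_P}$, the framework $(G_P, P)$ is globally rigid. Since also $P \notin A''$, Theorem \ref{thm:globrig} applies and shows that $P$ is identifiable up to isometry under Vietoris--Rips persistence. As $A$ is a proper algebraic subset of $\mathbb{R}^{2n}$, this proves the statement for generic $P$.

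The main obstacle here is not analytic — the substance is already in Theorems \ref{thm:globrig} and \ref{thm:dim2rigid} — but rather the bookkeeping around the dependence of $G_P$ on $P$: one cannot simply say ``$(G_P,P)$ is generically globally rigid, hence globally rigid for our generic $P$,'' because the relevant exceptional set depends on the graph, which in turn depends on $P$. Passing to the union over the finitely many graphs on $[n]$ resolves this. I would also note in passing that the case where $G_P$ happens to be complete requires no special treatment, since complete graphs are globally rigid (hence GGR) by definition, so the only role of $n \geq 4$ is to match the hypothesis of Theorem \ref{thm:globrig} and to make the stated conditions on $G_P$ non-vacuous.
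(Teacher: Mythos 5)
Your proof is correct and matches the paper's intended approach, combining Theorem \ref{thm:globrig} with the second part of Theorem \ref{thm:dim2rigid}; note that the paper's stated one-line proof cites Theorem \ref{thm:vrrigid}, which is evidently a typo for Theorem \ref{thm:globrig}, since the former only gives local identifiability. Your extra bookkeeping — intersecting the finitely many per-graph exceptional sets $A_G$ so that GGR-ness of the combinatorial object $G_P$ actually yields global rigidity of the specific framework $(G_P,P)$ — is a genuine subtlety that the paper's ``follows immediately'' elides, and you handle it correctly.
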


\begin{proof}
    This follows immediately from Theorem \ref{thm:vrrigid} and the second part of Theorem \ref{thm:dim2rigid}.
\end{proof}

\section{Circumsphere Rigidity Theory}
\label{sec:circrigid}
To study identifiability for persistent homology of the \v{C}ech filtration, 
we need to replace rigidity theory of Euclidean frameworks with an 
analogue involving circumspheres.  The setup we develop follows steps 
that are mostly standard in the rigidity and geometric constraint 
literature, but this specific variant is, to our knowledge, new. 

\begin{definition}\label{def: circumspehre framework}
Let $d\in \NN$ be a fixed dimension and $H = ([n],E)$ a hypergraph 
in which each hyperedge $\sigma\in E$ has between $2$ and $d+1$ endpoints 
and $P\in \PC$.
We call the pair $(H,P)$ a \emph{circumsphere framework}.

We say that two circumsphere frameworks $(H,P)$ and $(H,Q)$ are 
\emph{equivalent} if 
\[
    \det \Delta_{\sigma(P)}\det\Lambda_{\sigma(Q)} = 
    \det \Delta_{\sigma(Q)} \det \Lambda_{\sigma(P)}
    \qquad 
    \text{(for all hyperedges $\sigma\in E$)}
\]

A circumsphere framework $(H,P)$ is \emph{(locally) rigid} if there is a 
neighborhood $U\ni P$ such that, if $Q\in U$ and 
$(H,Q)$ is equivalent to $(H,P)$, then $Q$ is congruent to $P$.

A circumsphere framework $(H,P)$ is \emph{generic} if $P$ 
is generic.
\end{definition}

As a first step, we verify that circumsphere (local) rigidity is a generic property, 
analogously to the results of \cite{asimow1978rigidity}.  The proof is somewhat 
standard, though we need to take a bit of care because the measurement map is a 
rational function.
\begin{lemma}\label{lem: circumsphere generic}
Circumsphere rigidity is a generic property.  That is, for every dimension $d\in \NN$
and $n\in \NN$, there is a Zariski open, dense subset $V$ of 
$\PC$ such that for any hypergraph $H$ with $n$ vertices and
hyperedges consisting of between $2$ and $d+1$ endpoints, either every framework $(H,P)$ with $P\in V$ is circumsphere 
rigid or no $(H,P)$ with $P\in V$ is circumsphere rigid.
\end{lemma}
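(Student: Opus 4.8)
The plan is to mimic the Asimow--Roth argument for ordinary framework rigidity, replacing the squared-length measurement map with the circumsphere measurement map, while being careful that the latter is only a rational (not polynomial) function. Fix $H = ([n],E)$ with $m := |E|$ hyperedges $\sigma_1,\dots,\sigma_m$, each of size between $2$ and $d+1$. For each hyperedge $\sigma$ define the rational function $g_\sigma(P) := \det\Delta_{\sigma(P)}\det\Lambda_{\sigma(P)}$ — or more precisely, to get an honest map into affine space, take $g_\sigma$ to be the squared circumradius where it is defined; by Proposition~\ref{prop:circumradeqn} this is $-\det\Lambda_{\sigma(P)}/(2\det\Delta_{\sigma(P)})$, a rational function whose denominator $\det\Delta_{\sigma(P)}$ vanishes exactly where $\sigma(P)$ is affinely dependent. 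Let $U_0 \subseteq \PC$ be the (Zariski open, dense) locus where $\det\Delta_{\sigma(P)} \neq 0$ for every $\sigma \in E$; on $U_0$ the measurement map $g_H = (g_{\sigma_1},\dots,g_{\sigma_m}) : U_0 \to \RR^m$ is a well-defined, smooth (indeed real-analytic) rational map, and equivalence of $(H,P)$ and $(H,Q)$ for $P,Q \in U_0$ is exactly $g_H(P) = g_H(Q)$. The first step is to record that restricting attention to $U_0$ costs nothing, since $\PC \setminus U_0$ is a proper algebraic subset.

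The second step is the standard rank-stratification argument. On $U_0$, the rank of the Jacobian $d g_H|_P$ is a lower semicontinuous function of $P$ (in both the Euclidean and Zariski senses, as the rank is $\geq r$ iff some $r\times r$ minor — a rational function — is nonzero), so it attains a generic maximum value $\rho$ on a Zariski open dense subset $V \subseteq U_0$; we then further intersect $V$ with the generic locus where the congruence group acts with locally constant orbit dimension, so that on $V$ the fibers of $g_H$ have locally constant dimension $nd - \rho$. The key equivalence to establish is: for $P \in V$, the framework $(H,P)$ is circumsphere rigid if and only if the fiber $g_H^{-1}(g_H(P))$ coincides locally with the congruence orbit of $P$, i.e. has dimension $\binom{d+1}{2}$ (for $P$ whose affine span is all of $\RR^d$, which is also generic). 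One direction is immediate from the definition of rigidity; for the converse one uses that $g_H^{-1}(g_H(P))$ is a semialgebraic set whose local dimension at $P$ equals $nd-\rho$, and if this exceeds $\binom{d+1}{2}$ then, by the curve selection lemma applied to this semialgebraic set, there is a real-analytic path through $P$ in the fiber that is not contained in the congruence orbit, contradicting rigidity; if it equals $\binom{d+1}{2}$ then the fiber is, near $P$, a manifold of the same dimension as the orbit and containing it, hence equals it locally, giving rigidity. Since $nd - \rho$ is constant on $V$, rigidity holds either for all $P \in V$ or for none, which is the claim (with the final $V$ being the intersection of the various Zariski open dense sets produced along the way).

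The main obstacle I expect is the passage from "the fiber has the expected dimension" to "the fiber is locally the congruence orbit," i.e. ruling out that the generic fiber is a higher-dimensional semialgebraic set meeting the orbit only in lower dimension, or that it is a union of the orbit with other components accumulating at $P$. This is exactly the point where the Asimow--Roth proof uses that at a generic (maximal Jacobian rank) point the preimage is a smooth manifold of the expected dimension — here one must check that the implicit function theorem applies to the rational map $g_H$ on $U_0$ (fine, since the denominators are nonvanishing there) and that the congruence orbit is itself smooth of dimension $\binom{d+1}{2}$ at a point with full affine span (classical). A secondary technical nuisance, already flagged in the excerpt, is that $g_\sigma$ is rational rather than polynomial: one must keep track throughout that all the semialgebraic and Zariski-openness statements are taken relative to $U_0$, and that the "generic" set $V$ ultimately produced is still Zariski open and dense in $\PC$ because it is obtained from $\PC$ by removing finitely many proper algebraic subsets (the affine-dependence loci, the non-maximal-rank locus, and the degenerate-orbit locus). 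None of the individual steps is hard, but assembling them so that the final $V$ is genuinely independent of $H$ except through the combinatorial data — which it is, since everything depends only on which minors of which Cayley--Menger and Euclidean distance matrices are used — requires a little bookkeeping.
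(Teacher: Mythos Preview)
Your approach is essentially the same Asimow--Roth template the paper uses for its main case $n \ge d+1$: restrict to the Zariski open locus $U_0$ where every hyperedge sub-configuration is affinely independent, take the measurement map (you use the rational squared-circumradius map, the paper cross-multiplies to get a polynomial constraint system in $Q$ --- equivalent on $U_0$), stratify by Jacobian rank, and compare the generic fibre dimension with the congruence-orbit dimension. Both finish by intersecting over the finitely many $H$.

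There is one genuine oversight. You write that the congruence orbit has dimension $\binom{d+1}{2}$ ``for $P$ whose affine span is all of $\RR^d$, which is also generic.'' When $n \le d$ this set is empty: no $n$-point configuration spans $\RR^d$, so the stabiliser in the isometry group is always positive-dimensional and the orbit dimension is strictly smaller than $\binom{d+1}{2}$. Your rank comparison $nd - \rho = \binom{d+1}{2}$ is therefore never the right test in this regime. The fix along your lines is straightforward --- replace $\binom{d+1}{2}$ by the generic orbit dimension $\binom{d+1}{2} - \binom{d-n+1}{2}$ and check that this is still locally constant on a Zariski open set --- but you have not actually said this, and your phrase ``which is also generic'' is false as stated.

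The paper handles $n < d+1$ by a quite different and more laborious route: it builds explicit semialgebraic ``normal form'' maps $\Psi_{n,d} : \aff_{n,d} \to \aff_{n,n-1}$ that send each congruence class to a single representative in the lower-dimensional ambient space, reduces to the already-proved $n = d+1$ case there, and then pulls the generic set back. Your implicit fix (adjusting the orbit dimension) is cleaner and more in the spirit of the original Asimow--Roth argument; the paper's reduction is more hands-on but avoids having to track the orbit dimension at all.
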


\begin{proof}
Let $d$ and $n$ be given as in the statement.  We first observe that circumsphere rigidity is a non-trivial property 
in the sense that there are rigid and non-rigid frameworks.  This follows because the minimal circumradius of a $2$-point 
configuration $\{p_i,p_j\}$ is simply $\frac{1}{2}d(p_i,p_j)$.  In particular, if $P$ are $Q$ are congruent in the 
circumsphere sense, they are congruent in the conventional sense as well.  We now see that if $H$ has no 
edges, then any circumsphere framework $(H,P)$ must not be rigid and that if $H$ contains every hyperedge with 
two endpoints it must be rigid (because circumradii are invariant under affine isometry).

We first consider the case $n \geq d+1$, wherein the heart of our argument lies. The case $n < d+1$ is then intuitive, but rather technical. Given a hypergraph $H = ([n],E)$ with $n\ge d + 1$ vertices
and $m$ hyper-edges with between $2$ and 
$d+1$ endpoints.  Define a set $U_0$ of configurations $P$ of $n$ points so that, 
for all $\sigma\in E$, the sub-configuration $\sigma(P)$ is affinely independent.  Because each 
hyperedge $\sigma$ has at most $d+1$ endpoints, $U_0$ is a dense Zariski open subset 
of configuration space.  For all $P$ in $U_0$, the polynomial constraint system
\[
    \det \Delta_{\sigma(P)}\det\Lambda_{\sigma(Q)} = 
    \det \Delta_{\sigma(Q)} \det \Lambda_{\sigma(P)}
    \qquad 
    \text{(for all hyperedges $\sigma\in E$)}
\]
with $Q$ as the variable has $P$ as a solution and no trivial equations of the form 
$0 = 0$.  Set $U$ to be subset of $U_0$ on which the Jacobian of this geometric 
constraint system has its maximum rank, which we denote as $r$.  Becuase $U_0$ is 
dense in an (irreducible) affine space, it is also irreducible.  Hence $U$ is 
a dense, Zariski open subset of $U_0$ (as the rank being less than the maximum is 
defined by determinants).

The constant rank theorem implies that, for $P\in U$, there is a 
neighborhood $V$ of $P$ so that 
\[
    C = V\cap \{ Q : \det \Delta_{\sigma(P)}\det\Lambda_{\sigma(Q)} = 
    \det \Delta_{\sigma(Q)} \det \Lambda_{\sigma(P)} \}
\]
is a smooth manifold of dimension $dn - r$.  The manifold $C$ 
consists of the $Q$ in $V$ so that $(H,Q)$ is equivalent to $(H,P)$.

If $r = nd - \binom{d+1}{2}$, then 
$C$ is of dimension $\binom{d+1}{2}$ and otherwise it has higher dimension. These two cases correspond to rigid and flexible, since there is a 
$\binom{d+1}{2}$-dimensional space of images of $P$ under affine isometries
in any neighborhood of $P$, and all of these are in $C$.  
If $C$ has dimension $\binom{d+1}{2}$, then 
$V$ is the neighborhood in the definition of rigidity; if the dimension is 
larger, then any neighborhood of $P$ will contain an equivalent but 
non-congurent $Q$ in $C$.

Since $U$ is a dense, Zariski open set, we have shown that, for this 
$H$, circumsphere rigidity is a generic property.  Since there 
are finitely many $H$, we can intersect the $U$ for each of 
them, and the lemma follows.

Now we handle the case $n < d+1$. For $n \leq d+1$ let $\aff_{n,d}\subseteq \PC$ denote the subspace of affinely independent point clouds. We note that $\PC - \aff_{n,d}$ is semialgebraic and has dimension less than $nd$. For every pair $n,d$ such that $n \leq d+1$ we construct continuous semialgebraic maps $\Psi_{n,d}: \aff_{n,d} \to \aff_{n, n-1}$ and $\psi_{n,d}: \aff_{n,d} \to \mathbb{R}^{n-1}$ by induction. For the base case, we set $\Psi_{n,d} = \psi_{n,d} = 0$. Let $\iota_d: \mathbb{R}^{d-1} \to \mathbb{R}^{d}$ be the inclusion map sending $(x_1,\ldots, x_{d-1})$ to $(x_1,\ldots, x_{d},0)$. For simplicity we write $\iota = \iota_d$ with the value $d$ to be understood from context. We set $\iota(p_1,\ldots,p_n) = (\iota p_1,\ldots,\iota p_n)$. Given any $P = (p_1,\ldots,p_n)\in \aff_{n,d}$, set $P_- = (p_0,\ldots,p_{n-1})$. Letting $d_i = d(p_n,p_i)$, we note that by affine independence of $P$, there is a unique $p\in \mathbb{R}^d$ such that the distance of $p$ from the $i^\mathrm{th}$ entry of $\iota\Psi_{n-1,d}(P_-)$ is $d_i$ for each $i < n$, and such that the $(n-1)^{\mathrm{th}}$ coordinate of $p$, denoted $\pi_{n-1}(p)$ is positive. Therefore define $\psi_{n,d}(P) = p$. It follows that $\psi_{n,d}$ is continuous, provided we assume inductively that $\Psi_{n-1,d}$ is continuous. The graph of $\psi_{n,d}$ is the set
\begin{equation*}
    \psi_{n,d} = \{(P,p) \in \aff_{n,d} \times \mathbb{R}^d: \pi_{n-1}(p) > 0\, \wedge \, d(p,(\iota\Psi_{n-1,d}(P_-))_i)^2 = d_i^2 \text{ for } 1\leq i < n\},
\end{equation*}
As such, $\psi_{n,d}$ is semialgebraic if we assume $\Psi_{n-1,d}$ is semialgebraic, by quantifier elimination. Now we define $\Psi_{n,d}(P) = (\Psi_{n-1,d}, \psi_{n,d})$, which is therefore also continuous and semialgebraic, again inductively assuming $\Psi_{n-1,d}$ is continuous and semialgebraic. Moreover, by construction, $\Psi_{n,d}(P)$ is congruent to $P$. In fact, $\Psi_{n,d}$ sends each congruence class to a single point cloud.

Let $Z$ denote the image of $\Psi_{n,d}$, which is independent of the choice of $n >d$. There is a homeomorphism:
\begin{align*}
    h: Z \times O(d) \times \mathbb{R}^d &\to \aff_{n,d}\\
        ((p_1,\ldots,p_n),M,v)&\mapsto(Mp_1 + v,\ldots,Mp_n + v).
\end{align*}
Here, the $Z$ coordinate of $h^{-1}$ is precisely the map $\Psi_{n,n-1}$. Now fix some hypergraph $H$ as given in the lemma. By the proof of the case $n\geq d+1$, there exists a dense, Zariski open subset $U\subseteq\aff_{n,n-1}$ in which every $P$ has that, say, $(H,P)$ circumsphere rigid, with the case of every $P$ being not circumsphere rigid being handled similarly to what follows. We write $U' = \Psi_{n,n-1}(U)$. Since the complement of $U$ in $\aff_{n,n-1}$ is algebraic, the complement of $U'$ in $\Psi_{n,n-1}$ in $Z$ is semialgebraic. Moreover, up to a homeomorphism, $\Psi_{n,n-1}$ is a projection map, and projections of dense sets are dense, so $U'$ is dense. Since being circumsphere rigid is a property that is constant along congruence classes in $\mathbb{R}^{n-1}$, every $P \in U'$ is circumsphere rigid.

Now set $U'' = \Psi_{n,d}^{-1}(U')\subseteq \aff_{n,d}$. Since the complement of $U'$ in $\aff_{n,n-1}$ is semialgebriac, quantifier elimination shows the complement of $U''$ in $\aff_{n,d}$ is semialgebraic as well. Again, up to a homeomorphism, $\Psi_{n,d}$ is a projection map. Since preimages of dense sets under projections are dense, $U''$ is dense. Since $U''$ is dense and $\aff_{n,d} - U''$ is semialgebraic, $\aff_{n,d} - U''$ has dimension less than $nd$, implying that moreover $\PC - U''$ is semialgebraic with dimension less than $nd$. Taking the algebraic closure of this set we get a set $A$ with dimension less than $nd$. Hence $X :=\PC -  A$ is a dense, Zariski open set. Note that $X \subseteq U''$. Given $P \in X$, we claim $P$ is circumsphere rigid. Otherwise, there exists a sequence $P_1,P_2, \ldots$ approaching $P$ of point clouds such that $(P_i,H)$ is equivalent to $(P,H)$, but $P_i$ is not congruent to $P$. By continuity of $\Psi_{n,d}$, the sequence $\{\Psi_{n,d}(P_i)\}_{i=1}^\infty$ approaches $\Psi_{n,d}(P)$. Moreover, since $\Psi_{n,d}$ is an isometry of point clouds, each $\Psi_{n,d}(P_i)$ is not congruent to $\Psi_{n,d}(P)$, while $(H,\Psi_{n,d}(P_i))$ is equivalent to $(H,\Psi_{n,d}(P))$. This contradicts that $\Psi_{n,d}(P)$ is circumsphere rigid, since $P \in U''$. Hence $V$ is a dense, Zariski open set on which every $P$ is circumsphere rigid.

Since $X$ depends on $H$, we conclude the proof by again setting $V$ to be the intersection of the sets $X$ given by each $H$. Since there are only finitely many hypergraphs with edges of between $2$ and $d+1$ endpoints on $n$ vertices, $V$ is still dense and Zariski open.
\end{proof}

This result motivates the following definition.

\begin{definition}
A hypergraph $H$ with each hyperedge consisting of hyperedges with between $2$ and $d+1$ endpoints
is \emph{generically rigid (GLR) in dimension $d$} if the circumsphere framework $(H,P)$ is rigid in dimension $d$ for generic $P$.
\end{definition}

For edges $\sigma = \{i,j\}$, the condition $\det \Delta_{\sigma(P)}\det\Lambda_{\sigma(Q)} = \det \Delta_{\sigma(Q)} \det \Lambda_{\sigma(P)}$ is the same as $-d(p_i,p_j)^2 = -d(q_i,q_j)^2$, which is equivalent to $d(p_i,p_j) = d(q_i,q_j)$. As a result, this definition of GLR extends the earlier Definition \ref{def: graph rigidity} of the GLR property to hypergraphs.

\begin{remark}
The condition on the size of the hyperedges in $H$ is necessary for the 
proof of Lemma \ref{lem: circumsphere generic} to work.  When there are larger hyperedges, the configurations 
$P$ such that the points corresponding to every hyperedge lie on a 
common sphere are a proper algebraic subset of configurations, which may have 
complicated topology or be empty.
\end{remark}
We conjecture the following.
\begin{conjecture}
\label{conj:circrigid}
Let $n$ be such that $\binom{n}{d+1} \ge dn - \binom{d+1}{2}$.  Then the complete 
$(d+1)$-uniform hypergraph is generically circumsphere rigid in dimension $d$.
\end{conjecture}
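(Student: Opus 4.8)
The plan is to adapt the standard rigidity-theoretic strategy --- exhibit one configuration at which the measurement map has maximal Jacobian rank --- and then bootstrap in the number of points. Write $H_m$ for the complete $(d+1)$-uniform hypergraph on $[m]$; the statement concerns $H = H_n$. By the proof of Lemma~\ref{lem: circumsphere generic}, it suffices to exhibit one $P$ with every $\sigma(P)$ affinely independent at which the Jacobian of $P\mapsto(\rho_\sigma(P)^2)_{\sigma\in E(H)}$ has rank $nd-\binom{d+1}{2}$, since this is the generic maximum (the orbit of ambient isometries accounts for the deficiency $\binom{d+1}{2}$). Differentiating the circumcenter equations $\|p_k-c_\sigma\|^2=\rho_\sigma^2$ over $k\in\sigma$, and using the barycentric coordinates $\lambda^\sigma=(\lambda^\sigma_k)_{k\in\sigma}$ of $c_\sigma$ in the simplex $\sigma(P)$ (so $\sum_k\lambda^\sigma_k=1$, $\sum_k\lambda^\sigma_k p_k=c_\sigma$, whence $\sum_k\lambda^\sigma_k(p_k-c_\sigma)=0$, which eliminates $\dot c_\sigma$), one gets $\tfrac{1}{2}\,\partial_{\dot P}(\rho_\sigma^2)=\sum_{k\in\sigma}\lambda^\sigma_k\,(p_k-c_\sigma)\cdot\dot p_k$. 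I would package this as a \emph{circumsphere rigidity matrix} $R^{\mathrm{circ}}_H(P)$ whose $\sigma$-row has the block $\lambda^\sigma_k(p_k-c_\sigma)$ in the coordinates of vertex $k\in\sigma$ and $0$ elsewhere, so that generic circumsphere rigidity of $H$ is exactly the statement that $R^{\mathrm{circ}}_H(P)$ has rank $nd-\binom{d+1}{2}$. Substituting $p_k-c_\sigma=\sum_{l\in\sigma}\lambda^\sigma_l(p_k-p_l)$ and symmetrizing shows the $\sigma$-row equals $\sum_{\{k,l\}\subseteq\sigma}\lambda^\sigma_k\lambda^\sigma_l$ times the $\{k,l\}$-row of the ordinary bar-joint rigidity matrix $R^{\mathrm{euc}}_{K_n}(P)$ of the complete graph $K_n$ on $[n]$. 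Hence $R^{\mathrm{circ}}_H(P)=T(P)\,R^{\mathrm{euc}}_{K_n}(P)$ with $T(P)$ the $\binom{n}{d+1}\times\binom n2$ matrix $T_{\sigma,\{k,l\}}=\lambda^\sigma_k\lambda^\sigma_l$ if $\{k,l\}\subseteq\sigma$ and $0$ otherwise. As $K_n$ is generically rigid, $R^{\mathrm{euc}}_{K_n}(P)$ has rank $nd-\binom{d+1}{2}$ for generic $P$, so it is enough to find a generic $P$ for which the kernel of $T(P)$ meets the image of $R^{\mathrm{euc}}_{K_n}(P)$ only in $0$; equivalently, the row spaces of $R^{\mathrm{circ}}_H(P)$ and $R^{\mathrm{euc}}_{K_n}(P)$ coincide.

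The case $d=1$ is trivial ($\rho_{\{i,j\}}=\tfrac{1}{2}d(p_i,p_j)$ and $K_n$ is rigid on the line). For $d\ge 2$ one checks that the hypothesis $\binom{n}{d+1}\ge dn-\binom{d+1}{2}$ fails for $n=d+1$ and $n=d+2$ and holds for all $n\ge d+3$, so I would induct on $n$ with base case $n=d+3$. For the inductive step from $n$ to $n+1$: take $P'=(p_1,\dots,p_{n+1})$ generic; then $(p_1,\dots,p_n)$ is generic, and an element $\dot P'$ of the kernel of $R^{\mathrm{circ}}_{H_{n+1}}(P')$ (an infinitesimal flex) restricts on the first $n$ vertices to an infinitesimal flex of the sub-hypergraph supported on $[n]$, which is precisely $H_n$. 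By the inductive hypothesis that restriction is an infinitesimal isometry; subtracting a suitable ambient infinitesimal isometry of $P'$ we may assume $\dot p_1=\dots=\dot p_n=0$. The surviving constraints, from hyperedges $\sigma\ni n+1$, then collapse to $\lambda^\sigma_{n+1}\,(p_{n+1}-c_\sigma)\cdot\dot p_{n+1}=0$, and since generically $\lambda^\sigma_{n+1}\ne 0$ and the $\binom nd\ge d+1$ vectors $p_{n+1}-c_\sigma$ span $\RR^d$, we conclude $\dot p_{n+1}=0$. Hence $(H_{n+1},P')$ is infinitesimally rigid, which by Lemma~\ref{lem: circumsphere generic} gives the generic statement. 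The two genericity inputs used here --- nonvanishing of the $\lambda^\sigma_{n+1}$ and spanning of the $p_{n+1}-c_\sigma$ --- are routine, each established by exhibiting a single good configuration.

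What remains is the base case $n=d+3$, which I expect to be the main obstacle. Here the combinatorics is favorable: $\binom{d+3}{d+1}=\binom{d+3}{2}$, so $T(P)$ is \emph{square}, with rows and columns both indexed by the pairs of $[d+3]$ --- the row of a pair $\{a,b\}$ belongs to the hyperedge $[d+3]\setminus\{a,b\}$ and is supported on the pairs disjoint from $\{a,b\}$. Since $R^{\mathrm{euc}}_{K_{d+3}}(P)$ already has the right rank generically, it then suffices to produce one affinely-generic $P$ with $\det T(P)\ne 0$. The zero-pattern of $T$ is the bipartite ``disjointness'' graph on pairs of $[d+3]$, which is $\binom{d+1}{2}$-regular and hence has a perfect matching, so $\det T$ has a non-vanishing term in its Leibniz expansion; the real difficulty is that the entries $\lambda^\sigma_k\lambda^\sigma_l$ are strongly algebraically dependent (each row is assembled from a single vector $\lambda^\sigma\in\RR^{d+1}$), so one must rule out cancellation among the remaining terms. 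The approaches I would try, in order of preference, are: (i) pick a configuration with enough symmetry, or a controlled degeneration, that makes $T$ block-triangular or makes $\det T$ factor through a classical nonzero determinant; (ii) apply the paraboloid lift $p\mapsto(p,\|p\|^2)$, which linearizes ``sphere through $d+1$ points'' to ``affine hyperplane through the $d+1$ lifted points'', and try to identify $\det T$ up to nonzero factors with a Cayley--Menger-type determinant known to be nonzero; (iii) verify $\det T\not\equiv 0$ by computer algebra for small $d$ and extract a pattern or a secondary induction on $d$. Resolving any one of these, together with the reduction and the vertex-addition step above, would complete the proof.
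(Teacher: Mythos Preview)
The paper does \emph{not} prove this statement: it is stated as a \emph{conjecture}, and the only evidence the authors provide is the sentence ``We have verified this conjecture computationally via the Jacobian test on random point clouds for $d\leq 5$.'' There is therefore no proof in the paper to compare your proposal against.

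As for the proposal itself: your setup is sound and goes well beyond what the paper offers. The derivation of the circumsphere rigidity matrix and the factorization $R^{\mathrm{circ}}_H(P)=T(P)\,R^{\mathrm{euc}}_{K_n}(P)$ via barycentric coordinates are correct and give a clean linear-algebraic reformulation. The vertex-addition inductive step is also correct in outline: once $\dot p_1=\cdots=\dot p_n=0$, the surviving constraints are $\lambda^\sigma_{n+1}(p_{n+1}-c_\sigma)\cdot\dot p_{n+1}=0$, and the two genericity claims you flag (nonvanishing of $\lambda^\sigma_{n+1}$ and spanning of the $p_{n+1}-c_\sigma$) are indeed routine one-configuration checks.

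However, your proposal is not a proof: you explicitly leave the base case $n=d+3$ open, and this is where the entire difficulty lies. Your observation that $T(P)$ is square in this case, with rows and columns both indexed by pairs in $[d+3]$, is a nice structural insight, but as you yourself note, the Leibniz-expansion argument does not survive the algebraic dependencies among the entries $\lambda^\sigma_k\lambda^\sigma_l$. None of the three approaches you sketch --- symmetric degeneration, paraboloid lift, or computer-algebra pattern-matching --- is carried out, and each is nontrivial. So what you have is a reduction of the conjecture to a concrete determinantal identity (nonvanishing of $\det T(P)$ for generic $P\in\mathrm{PC}_{d+3,d}$), which is progress, but the conjecture remains open after your proposal just as it does in the paper.
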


We have verified this conjecture computationally via the Jacobian test on random point clouds for $d\leq 5$.

\section{The Rigidity Theory of \v{C}ech Persistence}
\label{sec:cechrigid}
In this section we investigate how the proposed circumsphere rigidity theory applies to \v{C}ech persistence. For convenience, let $\PH(P) := \PHc(P)$ and $\Phi:= \Phi^{\C}$. Analogously to Section \ref{sec:vrrigid}, we have the following definitions.
\begin{definition}
    Let $P\in\PC$ satisfy $\PH(P) = D$. We say that $P$ is \emph{identifiable up to isometry (under \v{C}ech persistence)} if for all $Q\in \PH^{-1}(D)$, $P$ and $Q$ are isometric. 
    
    We say $P$ is \emph{locally identifiable up to isometry (under \v{C}ech persistence)} if there exists a neighborhood $U$ of $P$ in $\PC$ such that every element $Q \in U\cap\PH^{-1}(D)$ is isometric to $P$. 
\end{definition}

The point of departure from the Vietoris-Rips case is that having a certain barcode no longer only constrains pairwise distances between points, motivating the following definition. Recall that $\Phi_\sigma(P) = \rho_\sigma(P)$, the minimal enclosing radius of $\sigma(P)$.

\begin{definition}
     Let $P= (p_1,\ldots,p_n)\in \PC$ and $D = \PH(P)$. We say that $\sigma \in K(n)$ is a \emph{critical simplex} of $P$ if 
     \begin{enumerate}
        \item $\rho_\sigma(P)$ is an endpoint value of $D$ and
        \item For all strict subsimplices $\tau \subset \sigma$,  $\rho_\tau(P) < \rho_\sigma(P)$.
     \end{enumerate}
    We denote the set of critical simplices of $P$ by $H_P$ and call this the \emph{critical hypergraph of $P$}.
\end{definition}
Note that if we have an inclusion of simplices $\tau \subseteq \sigma$, then $\rho_\tau(P) \leq \rho_\sigma(P)$, so the second condition of a critical simplex really amounts to saying the equality is never attained for strict subsets of $\sigma$. 

Every singleton $\{i\}$ has $\rho_{\{i\}}(P) = 0$ for all $P$, and 0 is necessarily an endpoint of $\PH(P)$ by Lemma \ref{lem:basicH0}. In general, it may not be the case that given $\sigma \in \critS(P)$ we have $\tau \in \critS(P)$ for all $\tau \subseteq \sigma$. These two facts imply that we may view $H_P$ as a hypergraph $([n],E_P)$, which is \emph{not} guaranteed to be a simplicial complex. Note that this hypergraph does not need to be pure, i.e. the edges in $E_P$ may have varying size. However, Lemma \ref{lemma:sphereunique} and property 2 in the above definition imply that all critical simplices have at most $d+1$ elements. Therefore, $(H_P,P)$ defines a circumsphere framework.

As before, we will need that the critical hypergraph does not change within the sets $S_\preceq$. The proof is similar to that of the previous section.

\begin{lemma}
\label{lem:cechcritsame}
    If $P,Q\in S_\preceq$ then $H_P = H_{Q}$.
\end{lemma}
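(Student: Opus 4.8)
The plan is to mimic the proof of Lemma \ref{lem:critsame}, exploiting that here $\Phi = \Phi^{\C}$ has coordinate maps $\Phi_\sigma = \rho_\sigma$, and that membership of a simplex in $H_P$ is detected by two conditions which are both invariant under reparametrizing the filtration by a strictly increasing function.

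First I would invoke the structure of $S_\preceq$ established in Lemma \ref{lem:preorderfiber}: since $P, Q \in S_\preceq$, there is a strictly increasing map $\psi : \mathbb{R} \to \mathbb{R}$ with $\psi \circ \Phi_\sigma(P) = \Phi_\sigma(Q)$ for every simplex $\sigma \in K(n)$; equivalently $\psi(\rho_\sigma(P)) = \rho_\sigma(Q)$ for all $\sigma$. Next I would show $H_P \subseteq H_Q$ by fixing $\sigma \in H_P$ and checking the two defining conditions of a critical simplex for $Q$. For condition (1), $\rho_\sigma(P)$ is an endpoint value of $\PH(P)$, so by \cite[Lemma 1.5]{leygonie2022fiber} the value $\psi(\rho_\sigma(P)) = \rho_\sigma(Q)$ is an endpoint value of $\PH(Q)$. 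For condition (2), any strict subsimplex $\tau \subset \sigma$ has $\rho_\tau(P) < \rho_\sigma(P)$; applying the strictly increasing map $\psi$ gives $\rho_\tau(Q) = \psi(\rho_\tau(P)) < \psi(\rho_\sigma(P)) = \rho_\sigma(Q)$. Hence $\sigma \in H_Q$, so $H_P \subseteq H_Q$. Exchanging the roles of $P$ and $Q$ (equivalently, replacing $\psi$ by $\psi^{-1}$, which is again strictly increasing) yields $H_Q \subseteq H_P$, and therefore $H_P = H_Q$.

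There is essentially no serious obstacle here; this is a routine transport-of-structure argument. The only points requiring a moment's care are: (i) Lemma \ref{lem:preorderfiber} is phrased in terms of barcodes, but its proof explicitly produces the required $\psi$ from the (in)equalities defining $S_\preceq$ — this is exactly how $\psi$ is used in the proof of Lemma \ref{lem:critsame}; and (ii) $\rho_\sigma(P)$ is always finite, so ``endpoint value'' here always refers to a bounded endpoint, and the unbounded endpoint of $D_0$ plays no role in the argument.
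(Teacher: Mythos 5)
Your proposal is correct and follows essentially the same argument as the paper: produce the strictly increasing reparametrization $\psi$, use \cite[Lemma 1.5]{leygonie2022fiber} for condition (1), use strict monotonicity (equivalently, the defining inequalities of $S_\preceq$) for condition (2), and conclude by symmetry. The only cosmetic difference is that the paper deduces $\rho_\tau(Q)<\rho_\sigma(Q)$ directly from the definition of $S_\preceq$ rather than by applying $\psi$, but these are the same observation.
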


\begin{proof}
    Let $\sigma\in H_P$. Thus $\rho_\sigma(P)$ is an endpoint $b$ of $\PH(P)$. Since $P, Q\in S_\preceq$, there is a strictly increasing function $\psi$ such that $\rho_{\tau}(Q) = \psi \circ \rho_{\tau}(P)$ for all simplices $\tau \in K(n)$. It follows from \cite[Lemma 1.5]{leygonie2022fiber} that $\psi(b)$ is an endpoint of $\PH(Q)$. Thus $\sigma$ satisfies the first condition for being a critical simplex. For the second condition, if $\tau$ is a strict subsimplex of $\sigma$, then $\rho_\tau(P) < \rho_\sigma(P)$. Since $P,Q\in S_\preceq$, we therefore have $\rho_\tau(Q) < \rho_\sigma(Q)$. Hence $\sigma$ is a critical simplex of $Q$. Therefore $H_P \subseteq H_{Q}$. The proof of the reverse inclusion is the same with the roles of $P$ and $Q$ switched.
\end{proof}

\begin{proposition}
    \label{prop:cechcritdists}
    Let $P$ be interior to $S_\preceq$. There is a neighborhood $U$ of $P$ such that for all $Q$ in $U$, $\PH(P) = \PH(Q)$ if and only if $\rho_\sigma(P) = \rho_\sigma(Q)$ for all $\sigma \in H_P$.
\end{proposition}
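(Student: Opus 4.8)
The plan is to mimic the proof of Proposition \ref{prop:critdists} from the Vietoris--Rips section, replacing the role of pairwise distances (and the maps $\Phi_{\{i,j\}}$) with the minimal enclosing radii $\rho_\sigma = \Phi_\sigma$, and the critical graph $G_P$ with the critical hypergraph $H_P$. The key structural inputs are Lemma \ref{lem:preorderfiber}, which says that for $P,Q$ in the same cell $S_\preceq$ we have $\PH(P)=\PH(Q)$ iff $\Phi_\sigma(P)=b \iff \Phi_\sigma(Q)=b$ for every simplex $\sigma$ and every bounded endpoint $b$ of the barcode, and Lemma \ref{lem:cechcritsame}, which guarantees $H_P = H_Q$ for $P,Q \in S_\preceq$. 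The neighborhood $U$ will simply be the interior of $S_\preceq$, which contains $P$ by hypothesis.

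First I would fix $D = \PH(P)$ and take $U$ to be the interior of $S_\preceq$. For the forward direction, suppose $Q \in U$ with $\PH(Q) = D$. Given $\sigma \in H_P$, by definition $\rho_\sigma(P)$ is an endpoint value $b$ of $D$; since $0$ is always an endpoint (Lemma \ref{lem:basicH0}), WLOG $b$ is a bounded endpoint, and then Lemma \ref{lem:preorderfiber} gives $\rho_\sigma(Q) = \Phi_\sigma(Q) = b = \Phi_\sigma(P) = \rho_\sigma(P)$. For the converse, suppose $Q \in U$ with $\rho_\sigma(P) = \rho_\sigma(Q)$ for all $\sigma \in H_P$. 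To apply Lemma \ref{lem:preorderfiber} I must show that for every bounded endpoint $b$ of $D$ and every simplex $\sigma \in K(n)$, $\Phi_\sigma(P) = b \iff \Phi_\sigma(Q) = b$. The point is that a bounded endpoint $b$ of $D$ is realized as $\rho_\sigma$ for some simplex, and among all simplices with $\rho_\sigma(P) = b$ there is a (setwise) minimal one, which is precisely a critical simplex: if $\sigma$ has $\rho_\sigma(P) = b$ and is not itself critical, some strict subsimplex $\tau \subset \sigma$ has $\rho_\tau(P) = \rho_\sigma(P) = b$, and we can keep shrinking until we reach a critical simplex $\sigma_0 \subseteq \sigma$ with $\rho_{\sigma_0}(P) = b$. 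Then $\rho_{\sigma_0}(Q) = b$ by hypothesis, and since $P,Q \in S_\preceq$ the preorders agree so $\rho_\sigma(Q) \geq \rho_{\sigma_0}(Q) = b$; symmetrically the reverse relations give $\rho_\sigma(Q) = b$ exactly when $\rho_\sigma(P) = b$. Using Lemma \ref{lem:cechcritsame} to ensure $H_P = H_Q$ lets me run the same argument with $P,Q$ swapped, which handles the ``only if'' half of the biconditional. Finally, since $\Phi_\sigma(P) = 0 \iff \sigma$ is a singleton $\iff \Phi_\sigma(Q) = 0$ for all $P,Q \in \PC$, the endpoint $b = 0$ is automatic. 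Then Lemma \ref{lem:preorderfiber} yields $\PH(P) = \PH(Q)$.

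The main obstacle I anticipate is the bookkeeping in the converse direction: unlike in the Vietoris--Rips case where each simplex's filtration value equals a pairwise distance attained at a \emph{specific} pair of indices that is itself a (sub-)edge, here the minimal enclosing radius $\rho_\sigma(P)$ of a large simplex $\sigma$ need not equal $\rho_\tau(P)$ for any proper subsimplex $\tau$ whose enclosing radius is a barcode endpoint — the critical hypergraph $H_P$ is genuinely not a simplicial complex, so I cannot simply ``descend to a critical face.'' The resolution is that I only need the biconditional $\Phi_\sigma(P) = b \iff \Phi_\sigma(Q) = b$, and for \emph{this} it suffices that whenever $\rho_\sigma(P) = b$ there is \emph{some} critical $\sigma_0 \subseteq \sigma$ with $\rho_{\sigma_0}(P) = b$ (obtained by the shrinking argument above, which terminates because singletons have $\rho = 0 < b$), together with the fact that within $S_\preceq$ the order relations among all the $\rho_\sigma$ values are frozen. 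One must be slightly careful that $b$ is an endpoint \emph{of the barcode} (which is needed to invoke Lemma \ref{lem:preorderfiber}), but this is exactly the first defining property of a critical simplex, so the shrinking argument stays inside the set of simplices with barcode-endpoint radius throughout. After that caveat is handled, the rest is a routine transcription of the Vietoris--Rips argument.
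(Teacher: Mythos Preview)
Your proposal is correct and follows essentially the same approach as the paper's proof: both take $U$ to be the interior of $S_\preceq$, handle the forward direction via Lemma \ref{lem:preorderfiber} directly, and for the converse reduce an arbitrary simplex with $\rho$-value equal to a barcode endpoint to a critical subsimplex by iteratively shrinking, then use that within $S_\preceq$ equalities among $\rho$-values are preserved. The only cosmetic difference is that the paper runs the shrinking argument starting from $Q$ (invoking Lemma \ref{lem:cechcritsame} upfront to know the minimal subsimplex lies in $H_Q=H_P$), whereas you start from $P$ and invoke Lemma \ref{lem:cechcritsame} only for the reverse implication; this is immaterial.
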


The proof of this proposition is similar to that of Proposition \ref{prop:critdists}.

\begin{proof}
Set $U$ to be the interior of $S_\preceq$. It follows that $U$ contains $P$. Let $D = \PH(P)$.

($\implies$) Suppose $Q \in U$ satisfies $\PH(Q) = D$. If $\sigma\in H_P$, then $\rho_\sigma(P) = b$, for some bounded endpoint $b$ of $D$.
By Lemma \ref{lem:preorderfiber} we have $\rho_\sigma(Q) = b$.

($\impliedby$) Suppose $Q\in U$ has that for all $\sigma\in H_P$, $\rho_\sigma(P) = \rho_\sigma(Q)$. If $\tau \in K(n)$ is any simplex such that $\rho_\tau(Q) = b$, where $b$ is an endpoint of $D$, then let $\sigma$ be a minimal subset of $\tau$ such that $\rho_\sigma(Q) = \rho_\tau(Q)$. From Lemma \ref{lem:cechcritsame} we have that $\sigma \in H_{Q} = H_P$. Thus $\rho_\sigma(Q) = \rho_\sigma(P)$. Since $\rho_\sigma(Q) = \rho_\tau(Q)$ and $P,Q \in S_\preceq$ we have that $\rho_\sigma(P) = \rho_\tau(P)$. In summary

\begin{equation*}
    \rho_\tau(P) = \rho_\sigma(P) = \rho_\sigma(Q) = \rho_\tau(Q) = b.
\end{equation*}

The same argument (without needing Lemma \ref{lem:cechcritsame}) shows that if $\rho_\tau(P) = b$ where $b$ is a barcode endpoint of $\PH(P)$, then $\rho_\tau(Q) = b$. Lemma \ref{lem:preorderfiber} then implies that $\PH(P) = \PH(Q)$.
\end{proof}

Now we use the facts about circumradii and enclosing radii established in the background to write an algebraic formula for $\rho_\sigma(P)$, when $\sigma$ is a critical simplex of $P$.

\begin{proposition}
    \label{prop:circumradform}
    Let $P \in \PC$ and $\sigma\in H_P$. Then
    \begin{equation*}
        \rho_\sigma(P)^2 = -\frac{\det\Lambda_{\sigma(P)}}{2\det\Delta_{\sigma(P)}}.
    \end{equation*}
\end{proposition}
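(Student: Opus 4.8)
The plan is to reduce this to Proposition~\ref{prop:circumradeqn}, which already gives the circumradius formula for affinely independent sub-configurations. So the proof has two parts: first, establishing that $\sigma(P)$ is affinely independent whenever $\sigma \in H_P$, and second, verifying that for a critical simplex the minimal enclosing radius $\rho_\sigma(P)$ actually coincides with the circumradius of $\sigma(P)$, so that the formula transfers.

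\medskip

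For the affine independence, I would recall the discussion immediately preceding the proposition: Lemma~\ref{lemma:sphereunique} combined with property~2 in the definition of a critical simplex forces $|\sigma| \le d+1$. But I need more than a cardinality bound --- I need genuine affine independence. Here is where property~2 does the work. Suppose $\sigma(P)$ were affinely dependent; then there is a proper affine subspace containing all of $\sigma(P)$, and within that subspace one can find a proper subset $\tau \subsetneq \sigma$ whose affine span already contains all of $\sigma(P)$ (for instance, an affine basis for the affine hull of $\sigma(P)$ drawn from the points of $\sigma$). The minimal enclosing sphere of $\sigma(P)$ is determined by points lying in that affine hull, and one checks using Lemma~\ref{lemma:sphereunique} (parts 1--3, applied to $\tau(P)$) that the minimal enclosing sphere of $\tau(P)$ has the same center and radius as that of $\sigma(P)$. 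Hence $\rho_\tau(P) = \rho_\sigma(P)$ for a proper subsimplex $\tau$, contradicting property~2. Therefore $\sigma(P)$ is affinely independent.

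\medskip

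For the second part, once $\sigma(P)$ is affinely independent, Proposition~\ref{prop:circumradeqn} tells us the circumradius of $\sigma(P)$ is defined and its square equals $-\det\Lambda_{\sigma(P)}/(2\det\Delta_{\sigma(P)})$. It remains to argue $\rho_\sigma(P)$, the \emph{minimal enclosing} radius, equals this circumradius. This is not automatic --- in general the minimal enclosing sphere of a point set need not pass through all the points. But for an affinely independent configuration of $k \le d+1$ points, the circumsphere (which passes through all of them and has its center in their affine span) is in fact the minimal enclosing sphere: if there were a strictly smaller enclosing sphere, then by Lemma~\ref{lemma:sphereunique} the minimal enclosing sphere of $\sigma(P)$ equals the minimal circumsphere of some proper sub-configuration $\tau(P)$ with center in the affine hull of $\tau(P)$, which again yields $\rho_\tau(P) = \rho_\sigma(P)$ for a proper $\tau \subsetneq \sigma$, contradicting property~2 of the critical simplex definition once more. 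Thus the minimal enclosing sphere of $\sigma(P)$ is its circumsphere, so $\rho_\sigma(P)^2$ equals the displayed expression.

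\medskip

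I expect the main obstacle to be the careful bookkeeping in the two contradiction arguments: in both cases one wants to extract from an ``inefficiency'' (affine dependence, or a smaller enclosing sphere) a \emph{proper} subsimplex realizing the same minimal enclosing radius, and both invocations of Lemma~\ref{lemma:sphereunique} need the sub-configuration identified there to actually be a proper subset of $\sigma$ --- which holds precisely because $|\sigma|\le d+1$ and the relevant $k$ in the lemma is at most $l+1 < |\sigma|$ once $\sigma(P)$ fails to be affinely independent or fails to be its own minimal enclosing sphere. Making this clean is the only delicate point; everything else is a direct citation of Propositions~\ref{prop:circumradeqn} and Lemma~\ref{lemma:sphereunique}.
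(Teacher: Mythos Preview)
Your overall strategy matches the paper's exactly: use Lemma~\ref{lemma:sphereunique} together with property~2 of a critical simplex to force (i) affine independence of $\sigma(P)$, and (ii) that the minimal enclosing sphere of $\sigma(P)$ is its circumsphere, then invoke Proposition~\ref{prop:circumradeqn}. The second half of your argument and the paper's are essentially identical.

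There is, however, a genuine error in your write-up of the first part. You choose $\tau$ to be an affine basis for the affine hull of $\sigma(P)$ and then assert that the minimal enclosing sphere of $\tau(P)$ coincides with that of $\sigma(P)$. This is false in general: take three collinear points $p_1,p_2,p_3$ with $p_2$ between $p_1$ and $p_3$; then $\{p_1,p_2\}$ is an affine basis for the line, but its minimal enclosing sphere is strictly smaller than that of $\{p_1,p_2,p_3\}$. Applying Lemma~\ref{lemma:sphereunique} to $\tau(P)$, as you write, tells you nothing about $\sigma(P)$. The correct move---which the paper carries out, and which your own final paragraph actually describes correctly---is to apply Lemma~\ref{lemma:sphereunique} to $\sigma(P)$ itself. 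If the affine span of $\sigma(P)$ has dimension $l$, the lemma hands you a subset $\tau\subseteq\sigma$ with $|\tau|\le l+1$; affine dependence means $l+1<|\sigma|$, so $\tau$ is proper, and $\rho_\tau(P)=\rho_\sigma(P)$ contradicts property~2. So the fix is simply to let the lemma produce $\tau$ rather than choosing it yourself.
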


\begin{proof}
    Suppose $\sigma(P)$ is not affinely independent. Then let $k$ be the dimension of the affine span of $\sigma(P)$. It follows that $\sigma$ has more than $k + 1$ elements. Lemma \ref{lemma:sphereunique} implies that $\sigma$ has a subset of $k+1$ elements with the same enclosing radius. Hence $\sigma$ is not an element of $H_P$, a contradiction.

    Given any $\sigma\in  H_P$, Lemma \ref{lemma:sphereunique} also implies that there is a subset $\tau \subseteq \sigma$ of at most $d+1$ points such that the minimal enclosing sphere of $\sigma(P)$ is the minimal enclosing sphere \emph{and} the minimal circumsphere of $P_\tau$. Since $\rho_\tau(P) = \rho_\sigma(P)$ we must have that $\tau = \sigma$ since $\sigma \in H_P$. By the first part $\sigma(P) = \tau(P)$ is affinely independent and hence we may compute its circumradius squared via Proposition \ref{prop:circumradeqn}, since the circumradius of $\tau(P)$ is the enclosing radius of $\tau(P)$. Hence
    \begin{equation*}
        \rho_\sigma(P)^2 = -\frac{\det\Lambda_{\sigma(P)}}{2\det\Delta_{\sigma(P)}}.
    \end{equation*}
\end{proof}

Now we arrive at our analogue of Theorem \ref{thm:vrrigid} for \v{C}ech persistence.

\cechglr*

\begin{proof}
    By the genericity assumption we may take $P$ to be interior to some $S_\preceq$.

    ($\impliedby$) If $(H_P,P)$ is rigid then take a neighborhood $\tilde{U}$ as given in the definition of rigidity of a circumsphere framework. Let $U$ be the intersection of $\tilde{U}$ with the interior of $S_\preceq$. For all $Q \in U$ such that $\PH(P) = \PH(Q)$, Proposition \ref{prop:cechcritdists} implies we have that $\rho_\sigma(P) = \rho_\sigma(Q)$ for all $\sigma \in H_P$. Squaring both sides and multiplying by $-2\det\Delta_{\sigma(P)}\det\Delta_{\sigma(Q)}$ we see by Proposition \ref{prop:circumradform} that $(H_P,P)$ and $(H_{P},Q)$ are congruent circumsphere frameworks. Hence $P$ and $Q$ are isometric.

    ($\implies$) Suppose $P$ is locally identifiable. Let $U$ be the interior of $S_\preceq$. Suppose now that
    \begin{equation}
    \label{eqn:circumrigid}
    \det\Delta_{\sigma(P)}\det\Delta_{\sigma(Q)} = \det\Delta_{\sigma(Q)}\det\Delta_{\sigma(P)}    
    \end{equation}
    holds for some $Q\in U$ and all $\sigma \in H_P$. Fixing $\sigma\in H_P$, Proposition \ref{prop:circumradform} implies that $\det\Delta_{\sigma(P)}$ is nonzero. Since $H_P = H_{Q}$ by Lemma \ref{lem:cechcritsame}, Proposition \ref{prop:circumradform} also implies that $\det\Delta_{\sigma(Q)}$ is nonzero. Dividing both sides of Equation \ref{eqn:circumrigid} by $\det \Delta_{\sigma(Q)} \det \Delta_{\sigma(P)}$ we get that $\rho_\sigma(P)^2 = \rho_\sigma(Q)^2$, by Proposition \ref{prop:circumradform} again. The numbers $\rho_\sigma(P)$ and $\rho_\sigma(Q)$ are both positive, so $\rho_\sigma(P)= \rho_\sigma(Q)$. As this holds for all $\sigma \in H_P$ and $P$ is locally identifiable up to isometry, Proposition \ref{prop:cechcritdists} implies that $P$ and $Q$ are isometric. Lemma \ref{lem:isomeansconj} implies that $P$ and $Q$ are therefore congruent, since $P$ is generic.
\end{proof}

\begin{example}
    \label{ex:threepoints}
    Let $P = (p_1,p_2,p_3)\in \mathrm{PC}_{3,d}$. If $P$ is generic then the triangle with vertices $p_1,p_2$, and $p_3$ is either acute or obtuse. If this triangle is obtuse, then by reordering points we may assume without loss of generality that the angle between the vectors $p_2-p_1$ and $p_3-p_1$ is greater than $\pi /2$. This implies that $\{1.2\}$ and $\{1,3\}$ appear before $\{2,3\}$ in the induced \v{C}ech filtration of $K(3)$. Meanwhile, the obtuse angle also implies that the enclosing radius of $P$ is $\frac{1}{2}d(p_2,p_3)$, so that $\{1,2,3\}$ appears at the same value as $\{2,3\}$ in the \v{C}ech filtration of $K(3)$. In particular, the filtered complex is contractible for after the appearance of both $\{1,2\}$ and $\{1,3\}$. Meanwhile, the inclusion of the edges $\{1,2\}$ and $\{1,3\}$ both decrease the number of path components by 1. Hence, $H_P$ consists of all vertices of $K(3)$ and the two edges $\{1,2\}$ and $\{1,3\}$. In particular $H_P$ is a graph and the rigidity of $(H_P,P)$ as a circumsphere framework is equivalent to the rigidity of $(H_P,P)$ as a classical rigidity framework. The graph $H_P$ is not rigid so $P$ is not locally identifiable up to isometry.

    If the points of $P$ form the vertices of an acute triangle, then the minimal enclosing sphere of $P$ is the circumsphere of $P$, and moreover, $\{1,2,3\}$ appears after all pairs in the filtration of $K(3)$. Therefore, the inclusion of each pair in $K(3)$ corresponds either to a reduction in the number of path components in the filtration or the creation of a cycle. Meanwhile the inclusion of $\{1,2,3\}$ kills this cycle. Hence every simplex in $H_P$ is critical. In particular, the graph $G$ of edges $\{i,j\}$ forms a (classically) globally rigid framework $(G,P)$, and hence $(H_P,P)$ must be globally rigid as well. We illustrate our conclusions in Figure \ref{fig:threepoints}.

    \begin{figure}[htpb]
    \centering
    \includegraphics[width=0.8\textwidth]{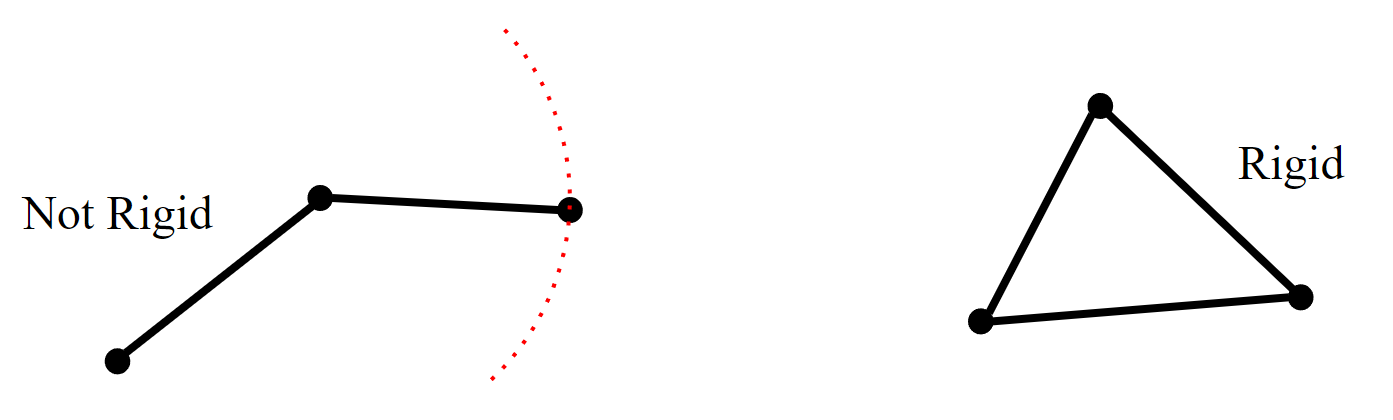}
    \caption{An illustration of Example \ref{ex:threepoints}. When the three points form the vertices of an obtuse triangle, the critical hypergraph is in fact a graph with two edges on the three points, which is flexible. When the three points form the vertices of an acute triangle, the critical hypergraph is the entire complete simplicial complex on three points, $K(3)$. In particular, the complete graph on three points is a sub-hypergraph of the critical hypergraph, implying in this case the framework $(H_P,P)$ is rigid, and in fact globally rigid.}
    \label{fig:threepoints}
\end{figure}
\end{example}

\section{Open Questions}
\label{sec:open}

In this paper we have turned questions of identifiability of the persistence map into questions in rigidity theory. Our research here leaves several natural questions in both persistence theory and rigidity theory unanswered, and we list a few of them here, some with commentary.

\begin{enumerate}
    \item For Vietoris-Rips persistence, describe sufficient conditions on a point cloud $P$ for the critical graph $G_P$ to be locally or globally rigid.
    \item For Vietoris-Rips persistence, when $n \geq d+2$ and $d\geq 2$, is our sufficient condidion for global identifiability in Theorem \ref{thm:globrig} necessary as well?

    The obstruction to proving this is as follows. It could plausibly be the case that $P = (p_1,\ldots,p_n)$ (say in the interior of some $S_\preceq$) is identifiable and yet $G_P$ is locally rigid, but not globally rigid. Local rigidity of $G_P$ would imply that that $P$ is isometric to any other point cloud in the interior of $S_\preceq$ with the same full barcode, by Proposition \ref{prop:critdists}. However non-global rigidity of $G_P$ implies there is some $Q = (q_1,\ldots,q_n) \in \PC$ that is not isometric to $P$ but has $d(q_i,q_j) = d(p_i,p_j)$ for all ${i,j}\in \crit(P)$. The problem is that $Q$ does not need to be in $S_\preceq$ and hence $Q$ may not have the same full barcode as $P$. Moreover, we have no way of proving in this situation that \emph{any} $Q \in \PC$ with $\PH(Q) = \PH(P)$ has a graph $G_{Q}$ which makes $(G_{Q}, Q)$ globally rigid. In this case we cannot apply \cite{gortler2019generic}, as we do to show our condition is sufficient. 
    \item What are combinatorial conditions on a hypergraph $H$ that makes $H$ GLR in $d$ dimensions?

    We already know, trivially, that if the edges with two vertices of $H$ form a rigid graph $G$, then $H$ must be rigid. However beyond this we have established no sufficient combinatorial condition that implies local rigidity of $H$, other than being complete with $n$ large enough 
    in dimensions at most $5$.  (See Conjecture \ref{conj:circrigid}, inter alia.)
    \item Is global rigidity in $d$ dimensions a generic property of circumsphere frameworks $(H,P)$, for $H$ fixed? If so, the previous question applies just as well to globally rigid frameworks.
    
    \item Is there a global identifiability criterion for \v{C}ech persistence analogous to Theorem \ref{thm:globrig}? In more generality, when does a filtration $\Phi$ furnish a similar global rigidity criterion? For example, what happens if we take the Vietoris-Rips filtration, but with distance between points is defined using a different norm?
\end{enumerate}

\section*{Acknowledgements}

The authors thank Michael Stillman, Jack Southgate, and Steve Oudot for conversations that helped us develop this paper and Ulrike Tillmann for early discussions leading us to study fibers of the persistence map for point clouds. DB, HAH, JL and UL received funding as members of the Center for Topological Data Analysis, Erlangen Programme for AI Hub and Center to Center collaboration between Oxford and Max Planck Institutes, funded by EPSRC grant EP/R018472/1, EP/Y028872/1 and EP/Z531224/1. DB was supported by NSF RTG-2136090. HAH gratefully acknowledges funding from a Royal Society University Research Fellowship and Renewal.

\newpage

\appendix
\section*{Appendix}
\renewcommand{\thesubsection}{\Alph{subsection}}
\subsection{Properties of Degree Zero Persistence}

Here we provide detailed proofs of the lemmas in Section \ref{sec:spanningtrees}, both restated here for convenience.

\hzerofacts*

\begin{proof}
Let $F(r)$ denote either $\C(P,r)$ or $\VR(P,r)$. Either way, the following are true
\begin{enumerate}
    \item $F(r)$ contains no elements for $r<0$
    \item For sufficiently large $r$, $F(r)=K(n)$,
    \item $F(0)$ consists only of $n$ 0-simplices, and,
    \item For $r \geq 0$, the map $H_0(F(0)) \to H_0(F(r))$ induced by inclusion of simplicial complexes is surjective.
\end{enumerate}
These facts have the following consequences for the barcode $D_0$
\begin{enumerate}
    \item No interval in $D_0$ has left endpoint less than 0,
    \item Exactly one interval in $D_0$ has right endpoint $\infty$,
    \item There are $n$ intervals in $D_0$ with closed left endpoint equal to $0$, and
    \item No interval in $D_0$ has left endpoint greater than 0.
\end{enumerate}
The filtration $F$ also has the property that for any $r$, there exists $\epsilon$ sufficiently small that $F(r) = F(r+\epsilon)$, implying that each right endpoint of $D_0$ is open. In particular, this means that the right endpoints of $D_0$ are all positive.
\end{proof}

\minspantree*

\begin{proof}
    Fix a point cloud $P = \{p_1,\ldots,p_n\}$. We have that $\PHc_0(P) = \PHvr_0(P)$ so let $D_0=\PHc_0(P)$.
    Let $G(r)$ denote the 1-skeleton of $\C(P,r)$ and note that the persistence modules $\{H_0(G(r))\}_{r\in\mathbb{R}}$ and $\{H_0(\C(P,r))\}_{r\in\mathbb{R}}$ are isomorphic, since $H_0$ and its induced maps only depend on $1$-skeleta. Define $r_i$ and $\mu_i$ as in the lemma and for our convenience we let $r_0:=0$. We construct the desired kind of minimal spanning tree on the vertex set $[n]$. First, consider the set $A_1$ of pairs $\{i,j\}$ such that $d(p_i,p_j) = 2r_1$. We fix $W_1$, a maximal cycle free subset of $A_1$, and define $E_1 := W_1$. Inductively, suppose we have constructed already $E_{k-1}$ for $k < m$. Let $A_{k}$ denote the set of pairs $\{i,j\}$ such that $d(p_i,p_j) = 2r_k$. Let $W_k$ be a maximal subset of $A_k$ subject to the constraint that $W_k \cup E_{k-1}$ is cycle free. We let $E_k := W_k \cup E_{k-1}$.

    For each $1\leq k \leq m$ we define a graph $T_k$ with vertex set $[n]$ and edge set $E_k$. We let $T_0$ denote the edgeless graph with vertices $[n]$. Since $T_k$ is a subgraph of $G(r_k)$ with the same vertex set, it has at least as many components as $G(r_k)$, and moreover each component of $T_k$ is a subset of a component of $G(r_k)$. We show by induction on $k$ that $T_k$ and $G(r_k)$ in fact have the same number of components, and therefore have the same components, with the base case $k=0$ being obvious. For $k>0$, if $T_k$ has more components than $G(r_k)$, let
    \begin{equation*}
        c : = \min \;\{d(p_i,p_j): \text{$i$ and $j$ are disconnected in $T_k$ but connected in $G(r_k)$}\}.
    \end{equation*}
    If $i$ and $j$ are disconnected in $T_k$ but connected in $G(r_k)$ then we can pick a path of edges in $G(r_k)$ connecting $i$ and $j$. At least one of these edges must connect different components of $T_k$. Taking $i'$ and $j'$ to be the incident vertices of one such edge we see that $c \leq 2r_k$. Moreover, by our inductive hypothesis, any $i$ and $j$ such that $d(p_i,p_j)\leq 2r_{k-1}$ are connected in $T_k$. Thus $ 2r_{k-1} < c\leq 2r_k$. If $ 2r_{k-1} < c < 2r_k$, pick some $i$ and $j$ in different components of $T_k$ such that $d(p_i,p_j) \leq c$. Hence $i$ and $j$ are connected in $G(c)$. From the barcode $D_0$ we deduce that $i$ and $j$ must also be connected in $G(r_{k-1})$, and therefore connected in $T_{k-1}$, a subgraph of $T_k$. This is a contradiction. Thus $c = 2r_k$, and so we may pick $i$ and $j$ disconnected in $T_k$ but connected in $G(r_k)$, such that $d(p_i,p_j) = 2r_k$. The existence of such a pair implies that $W_k$ is not maximal. Hence by contradiction we have shown that $T_k$ has the same components as $G(r_k)$.

    We let $T = T_m$. The graph $T$ is connected since from $D_0$ we deduce that $G(r_m)$ is connected. Further $T$ is cycle free by construction, so $T$ is a tree. Hence $T$ has $n-1$ edges. For each edge $\{i,j\}$ of $T$ we assign a weight $d(p_i,p_j)$. Let $w_1\leq\ldots\leq w_{n-1}$ denote these weights in ascending order. In particular, we know that each $w_i=2r_j$, for some $j$, by the construction of $T$. Moreover, $2r_k$ will occur in this sequence with the same multiplicity as the number of elements in $W_k$.

    We have already shown that $T_{k}$ has exactly as many components as $G(r_k)$ for all $k$. The structure of the barcode $D_0$ then implies that $T_{k-1}$ has $\mu_k$ more connected components than $T_k$ has. Moreover, $T_k$ is obtained from $T_{k-1}$ by attaching the edges $W_k$. Since $T_k$ is cycle free, this implies that each edge attached to $T_{k-1}$ decreases the number of components of the resulting graph by one. Hence $W_k$ has $\mu_k$ elements. This shows that $T$ is a spanning tree with multiset of edge lengths identical to the multiset of finite right endpoints in $D_0$.

    Now let $T'$ be any other spanning tree of $[n]$, and assign edge weights to $T'$ as to $T$, mapping $\{i,j\}$ to $d(p_i,p_j)$. Ordering the edge weights of $T'$, we obtain another sequence $w'_1\leq \ldots \leq w'_{n-1}$. We claim $w_i\leq w'_i$ for all $i$. Suppose otherwise and pick the smallest $l$ such that $w'_l< w_l$. Define $T(r)$ to be the subgraph of $T$ consisting of the vertex set $[n]$ and the edges of $T$ with weight less than or equal to $2r$. Note that $T(r_k) = T_k$. Similarly, define $T'(r)$ to be the subgraph of $T'$ consisting of the vertex set $[n]$ and the edges of $T'$ with weight less than or equal to $2r$. This makes both $T(r)$ and $T'(r)$ subgraphs of $G(r)$. By assumption, $T'(w'_l)$ has more edges than $T(w'_l)$. Since both are cycle free, this means that $T'(w'_l)$ has fewer components than $T(w'_l)$. Let $r_k$ be the largest of the numbers $r_0\leq\ldots\leq r_m$ that satisfies $r_k \leq w'_l$. Since $T$ only has edges with weights values $2r_i$, $T(w'_l) = T(r_k)$. Moreover, the barcode $D_0$ shows that the inclusion $G(r_k) \to G(w'_l)$ induces an isomorphism on $H_0$, so $G(r_k)$ has exactly as many components as $G(w'_l)$. By construction, $T'(w'_l)$ is a subgraph of $G(w'_k)$ with the same vertex set, so $T'(w'_l)$ has at least as many components as $G(w'_k)$. Let $C(G)$ denote the number of components of a graph $G$. The arguments in this paragraph show:
    \begin{equation*}
        C(T(w'_l)) = C(T(r_k)) = C(T_k) = C(G(r_k)) = C(G(w'_l)) \leq C(T'(w'_l)),
    \end{equation*}
    and
    \begin{equation*}
        C(T(w'_l)) > C(T'(w'_l)).
    \end{equation*}
    These statements are obviously contradictory, so $w_i \leq w'_i$ for all $i$.

    This proves that $T$ is a minimal spanning tree and that any other minimal spanning tree has the same multiset of edge lengths, as desired.
\end{proof}

\subsection{Lemmas from Real Algebraic Geometry}

Here we state two elementary, though somewhat technical, results we need from real algebraic geometry, and provide citations for the proofs.

\begin{lemma}
    \label{lem:boundarydim}
    Let $X \subseteq \mathbb{R}^m$ be semialgebraic. Then $\dim \bd (X) < m$, where $\bd$ is the topological boundary of $X$ in $\mathbb{R}^m$ with respect to the Euclidean distance. 
\end{lemma}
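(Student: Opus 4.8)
The plan is to deduce this from a single standard structural fact about semialgebraic sets: if $S\subseteq\mathbb{R}^m$ is semialgebraic and $\dim S=m$, then $S$ has nonempty interior in $\mathbb{R}^m$ (equivalently, a semialgebraic subset of $\mathbb{R}^m$ with empty interior has dimension $<m$); this is part of the cell decomposition theory and may be found in \cite[Chapter 2]{bochnak2013real}. Everything else is elementary point-set topology.

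First I would record that $\bd(X)$ is semialgebraic, since interiors, closures, and boundaries of semialgebraic sets are semialgebraic, as noted in Section~\ref{sec:algebra}. Since $\operatorname{int}(X)\subseteq X\subseteq\overline{X}$, I would write
\begin{equation*}
\bd(X)=\big(\overline{X}\setminus X\big)\cup\big(X\setminus\operatorname{int}(X)\big),
\end{equation*}
a union of two semialgebraic sets. Because the dimension of a finite union is the maximum of the dimensions of its pieces, it suffices to show each piece has dimension less than $m$.

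For the piece $X\setminus\operatorname{int}(X)$: if it had dimension $m$, then by the structural fact it would contain a nonempty open set $W\subseteq\mathbb{R}^m$; but then $W\subseteq X$ with $W$ open forces $W\subseteq\operatorname{int}(X)$, contradicting $W\subseteq X\setminus\operatorname{int}(X)$. For the piece $\overline{X}\setminus X$: if it had dimension $m$, it would likewise contain a nonempty open $W\subseteq\mathbb{R}^m$ with $W\cap X=\emptyset$; then $X\subseteq\mathbb{R}^m\setminus W$, which is closed, so $\overline{X}\subseteq\mathbb{R}^m\setminus W$, i.e.\ $W\cap\overline{X}=\emptyset$, contradicting $W\subseteq\overline{X}\setminus X$ and $W\neq\emptyset$. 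Combining the two bounds gives $\dim\bd(X)<m$.

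I do not anticipate a genuine obstacle here: the only nontrivial ingredient is the quoted fact that a top-dimensional semialgebraic subset of $\mathbb{R}^m$ has nonempty interior, whose proof rests on cylindrical algebraic decomposition; the rest is a routine topological argument. Alternatively, one may simply cite the statement directly from \cite{bochnak2013real}, which is the approach taken for the technical lemmas of this appendix.
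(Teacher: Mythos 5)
Your argument is correct, but it takes a genuinely different route from the paper. The paper proves this lemma by a direct citation to \cite[Chapter 4, Corollary 1.10]{van1998tame}, which is the o-minimal ``frontier has smaller dimension'' result ($\dim(\overline{S}\setminus S) < \dim S$ for nonempty definable $S$); applying it to $X$ and to its complement and using $\bd(X)=(\overline{X}\setminus X)\cup(X\setminus\operatorname{int}(X))$ gives the claim at once. You instead build the proof on a different standard ingredient — that an $m$-dimensional semialgebraic subset of $\mathbb{R}^m$ must have nonempty interior — and then run a clean point-set argument on each piece of the same decomposition. Both ingredients ultimately rest on cell decomposition, so neither is ``more elementary'' in a deep sense, but your write-up has the advantage of being self-contained given only the interior fact, whereas the paper's version is shorter but defers the real content entirely to the citation. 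One small stylistic point: you could cite the interior fact more precisely (e.g.\ \cite[Proposition~2.8.13]{bochnak2013real} or the corresponding corollary of cell decomposition), since ``Chapter 2'' is a bit vague; otherwise the argument is complete and the contradictions in both cases are correctly derived.
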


\begin{proof}
    This is immediate from \cite[Chapter 4, Corollary 1.10]{van1998tame}.
\end{proof}

\begin{lemma}
    \label{lem:fiberdim}
    Let $X\subseteq \mathbb{R}^n$ be semialgebraic and $f:X \to \mathbb{R}^m$ be a semialgebraic map. Then for each $d \in \{0,1,\ldots,n\}$ the sets $S_f(d):= \{a\in \mathbb{R}^m:\dim f^{-1}(a) = d\}$ and $f^{-1}(S_f(d))$ are semialgebraic and $\dim f^{-1}(S_f(d)) = \dim S_f(d) + d$.
\end{lemma}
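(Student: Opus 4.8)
The plan is to reduce the statement to the standard semialgebraic fiber-dimension theorem, which is the natural companion of Lemma~\ref{lem:boundarydim} and is found in \cite[Chapter 4]{van1998tame}: if $S\subseteq\mathbb{R}^m\times\mathbb{R}^n$ is semialgebraic and, for $a\in\mathbb{R}^m$, one writes $S_a=\{b\in\mathbb{R}^n:(a,b)\in S\}$, then for each $d$ the set $S(d):=\{a\in\mathbb{R}^m:\dim S_a=d\}$ is semialgebraic and $\dim\{(a,b)\in S:a\in S(d)\}=\dim S(d)+d$. Everything else is the routine passage between a semialgebraic map and its graph.

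First I would replace $f$ by its graph $\Gamma:=\{(a,x)\in\mathbb{R}^m\times\mathbb{R}^n:x\in X,\ f(x)=a\}$, which is semialgebraic because $f$ is a semialgebraic map. The projection $p:\Gamma\to X$ given by $(a,x)\mapsto x$ is a semialgebraic homeomorphism, with inverse $x\mapsto(f(x),x)$, so it preserves the dimension of every semialgebraic subset of $\Gamma$. Moreover the slice $\Gamma_a=\{x:(a,x)\in\Gamma\}$ is exactly $f^{-1}(a)$, so $\dim\Gamma_a=\dim f^{-1}(a)$ and hence $\Gamma(d)=S_f(d)$. Applying the fiber-dimension theorem to $\Gamma$ then shows that $S_f(d)$ is semialgebraic and that $\dim\{(a,x)\in\Gamma:a\in S_f(d)\}=\dim S_f(d)+d$. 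Since $p$ restricts to a semialgebraic homeomorphism from $\{(a,x)\in\Gamma:a\in S_f(d)\}$ onto $f^{-1}(S_f(d))$, the latter set is semialgebraic (alternatively, it is the preimage of a semialgebraic set under a semialgebraic map) and has the same dimension, giving $\dim f^{-1}(S_f(d))=\dim S_f(d)+d$ as claimed. Note $d$ ranges over $\{0,\dots,n\}$ because each $\Gamma_a$ sits inside $\mathbb{R}^n$, matching the statement.

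The only nontrivial ingredient is the fiber-dimension theorem itself, i.e.\ that the fiber dimension is a semialgebraic function of the base point and that dimension is additive along the locus of constant fiber dimension; this rests on cell decomposition and is precisely the kind of fact the appendix defers to \cite{van1998tame}. So the main obstacle is not a real argument but locating and citing the correct statement; once that is in hand, the reduction above is essentially bookkeeping, and care is only needed to check that $p$ genuinely identifies $\{(a,x)\in\Gamma:a\in S_f(d)\}$ with $f^{-1}(S_f(d))$ as semialgebraic sets, which is immediate since $p$ and $p^{-1}$ are both semialgebraic.
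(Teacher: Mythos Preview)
Your proposal is correct and matches the paper's approach: the paper simply cites \cite[Chapter~4, Corollary~1.6(ii)]{van1998tame} together with the fact that preimages of semialgebraic sets under semialgebraic maps are semialgebraic, and you have spelled out the standard graph-to-map reduction that makes this citation apply. There is no substantive difference.
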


\begin{proof}
    This follows immediately from \cite[Chapter 4, Corollary 1.6 (ii)]{van1998tame} and the fact that preimages of semialgebraic sets under semialgebraic maps are semialgebraic.
\end{proof}

\newpage

\bibliographystyle{alpha}
\bibliography{refs}

\end{document}